\DeclareFontFamily{OT1}{rsfs}{}
\DeclareFontShape{OT1}{rsfs}{n}{it}{<-> rsfs10}{}
\DeclareMathAlphabet{\mathscr}{OT1}{rsfs}{n}{it}
\DeclareFontFamily{OT1}{rsfs}{}
\DeclareFontShape{OT1}{rsfs}{n}{it}{<-> rsfs10}{}
\DeclareMathAlphabet{\mathscr}{OT1}{rsfs}{n}{it}
\newcommand{\Z}{{\mathbb Z}}
\newcommand{\Q}{{\mathbb Q}}
\newcommand{\R}{{\mathbb R}}
\newcommand{\A}{\mathbb{A}}
\newcommand{\cP}{\mathscr{P}}
\newcommand{\md}{{\rm mod}\ }
\newcommand{\Ga}{\mathrm{Gal}}
\newtheorem{thm}{Theorem}[section]
\newtheorem{lemma}[thm]{Lemma}
\newtheorem{prop}[thm]{Proposition}
\newtheorem{cor}[thm]{Corollary}
\theoremstyle{remark}
\newtheorem{remark}[thm]{Remark}
\theoremstyle{defin}
\newtheorem*{defin}{Definition}
\newtheorem*{thmA}{Theorem A}
\newtheorem*{thmB}{Theorem B}
\newtheorem*{thmC}{Theorem C}
\newtheorem*{thmD}{Theorem D}
\newtheorem*{theor}{Theorem} 
\newcommand{\cG}{\mathscr{G}}
\numberwithin{equation}{section}
\begin{document}

\title[Congruence kernel]{On the congruence kernel for simple algebraic groups}

\author[Prasad]{Gopal Prasad}
\author[Rapinchuk]{Andrei S. Rapinchuk}

\begin{abstract}
This paper contains several results about the structure of the congruence kernel $C^{(S)}(G)$ of an absolutely
almost simple simply connected algebraic group $G$ over a global field $K$ with respect to a set of places $S$ of $K$.
In particular, we show that $C^{(S)}(G)$ is always trivial if $S$ contains a generalized arithmetic progression. We also
give a criterion for the centrality of $C^{(S)}(G)$ in the general situation in terms of the existence of commuting lifts
of the groups $G(K_v)$ for $v \notin S$ in the $S$-arithmetic completion $\widehat{G}^{(S)}$. This result enables one to give simple proofs of the
centrality in a number of cases. Finally, we show that if $K$ is a number field and $G$ is $K$-isotropic then
$C^{(S)}(G)$ as a normal subgroup of $\widehat{G}^{(S)}$ is almost generated by a single element.
\end{abstract}

\address{Department of Mathematics, University of Michigan, Ann
Arbor, MI 48109}

\email{gprasad@umich.edu}

\address{Department of Mathematics, University of Virginia,
Charlottesville, VA 22904}

\email{asr3x@virginia.edu}

\maketitle

\vskip3mm

\hfill {\it To V.P.~Platonov on his 75th birthday}

\vskip3mm

\section{Introduction}\label{S:Intro}

Let $G$ be an absolutely almost simple simply connected algebraic group defined over a global field $K$, and let $S$ be a nonempty
subset of the set $V^K$ of all places of $K$ containing the set $V_{\infty}^K$ of archimedean places. We fix a $K$-embedding $G \hookrightarrow \mathrm{SL}_n$
and define $$G(\mathcal{O}(S)) = G(K) \, \cap \, \mathrm{SL}_n(\mathcal{O}(S)),$$ where $\mathcal{O}(S)$  is the ring of $S$-integers in $K$.
One then introduces two topologies, $\tau_a$ and $\tau_c$, on the group of $K$-rational points
$G(K)$, called the $S$-{\it arithemetic topology} and the $S$-{\it congruence topology}, respectively, by taking for a fundamental system of neighborhoods of the
identity all normal subgroups of finite index $N \subset G(\mathcal{O}(S))$ for $\tau_a$, and the congruence subgroups $G(\mathcal{O}(S) , \mathfrak{a}) =  G(K) \, \cap
\, \mathrm{SL}_n(\mathcal{O}(S) , \mathfrak{a})$ corresponding to nonzero ideals $\mathfrak{a}$ of $\mathcal{O}(S)$\footnote{As usual, $\mathrm{SL}_n(\mathcal{O}(S) , \mathfrak{a}) = \{ A \in \mathrm{SL}_n(\mathcal{O}(S)) \, \vert \, A \equiv I_n(\mathrm{mod} \: \mathfrak{a}) \}$.} for $\tau_c$. One shows that these topologies in fact do not depend on the choice of the original $K$-embedding of $G$ into $\mathrm{SL}_n$, and furthermore, the group $G(K)$ admits completions with respect to both $\tau_a$ and $\tau_c$. These completions will be denoted $\widehat{G}^{(S)}$ and $\overline{G}^{(S)}$, and called respectively the $S$-arithmetic and the $S$-congruence completions. As the topology $\tau_a$ is finer than $\tau_c$, there is a natural continuous homomorphism $\pi^{(S)} \colon \widehat{G}^{(S)} \to \overline{G}^{(S)}$, which turns out to be surjective. Its kernel $C^{(S)}(G)$ is called the $S$-\emph{congruence kernel}. Clearly, $C^{(S)}(G)$ is trivial if and only if every normal subgroup $N \subset G(\mathcal{O}(S))$ contains a congruence subgroup $G(\mathcal{O}(S) , \mathfrak{a})$ for some $\mathfrak{a}$, which means that we have an affirmative answer to the classical congruence subgroup problem for the group $G(\mathcal{O}(S))$. In general,  $C^{(S)}(G)$ measures the deviation from the affirmative answer, so by the congruence subgroup problem in a broader sense one means the task of  computing $C^{(S)}(G)$. (In the sequel, we will omit the superscript $(S)$ if this may not lead to a confusion.)

The investigation of the congruence subgroup problem has two aspects: the first  is to prove that in certain cases $C^{(S)}(G)$ is finite and then determine it precisely, and the other is to understand the structure of $C^{(S)}(G)$ in the cases where it is infinite. We recall that the expected conditions for $C^{(S)}(G)$ to be finite/infinite are given in
the following conjecture of Serre \cite{Serre1}:

\vskip2mm

\begin{center}

{\it $C^{(S)}(G)$ should be finite if $\mathrm{rk}_S \: G := \sum_{v \in S} \mathrm{rk}_{K_v} \: G$ is $\geqslant 2$ and $G$ is $K_v$-isotropic

for all $v \in S \setminus V_{\infty}^K$, and $C^{(S)}(G)$ should be infinite if $\mathrm{rk}_S \: G = 1$.}

\end{center}

\noindent (In the sequel, we will always assume that  $\mathrm{rk}_S \: G > 0$ as otherwise the group $G(\mathcal{O}(S))$ is finite, hence $C^{(S)}(G)$ is trivial.) The results of the current paper contribute to both aspects of the congruence subgroup problem. (We refer the reader to the surveys \cite{PR-Milnor} and \cite{Ra3} and references therein for information about the very impressive body of work in this area.)

To give the precise formulations, we need to recall the statement of the Margulis-Platonov conjecture (MP) for the group $G(K)$:

\vskip2mm

\begin{center}

\parbox[t]{15cm}{{\it Let $\mathcal{A} = \{ v \in V^K \setminus V_{\infty}^K \, \vert \, \mathrm{rk}_{K_v} \: G = 0 \}$ be the set of nonarchimedean places of $K$ where  $G$ is anisotropic, let  $G_{\mathcal{A}} = \prod_{v \in \mathcal{A}} G(K_v)$, and let $\delta \colon G(K) \to G_{\mathcal{A}}$ be the diagonal map. Then for any noncentral normal subgroup $N$ of $G(K)$, there is an open normal subgroup $U$ of $G_{\mathcal{A}}$ such that $N = \delta^{-1}(U)$; in particular, if $\mathcal{A} = \varnothing$ (which is always the case if $G$ is not of type $\textsf{A}_n$) then $G(K)$ does not contain any proper noncentral normal subgroups.}}

\end{center}

\vskip2mm

\noindent We note that (MP) has been established in all cases where  $G$ is $K$-isotropic (see \cite{Gille}) and also in many cases where $G$ is $K$-anisotropic (see \cite[Ch.\,IX]{PlRa} and \cite[Appendix A]{RS}). Throughout this paper, we reserve the notation $\mathcal{A} = \mathcal{A}(G)$ for the set of anisotropic nonarchimedean places of $G$ and {\it  assume that $({\mathrm{MP}})$ holds for $G(K)$ and that $\mathcal{A} \cap S = \varnothing$}. (As shown in \cite[\S 6]{Ra1}, the general case can be reduced to the case where $\mathcal{A} \cap S = \varnothing$, but if the latter fails then $C^{(S)}(G)$ is always infinite.) It is known that $C^{(S)}(G)$ is finite if and only if it is \emph{central} (i.e., is contained in the center of $\widehat{G}^{(S)}$ ), in which case it is isomorphic to the Pontrjagin dual of the \emph{metaplectic kernel} $M(S , G)$ (cf.\,\cite[\S 3]{PR-Milnor}).  Since the metaplectic kernel has completely been determined in \cite{PR1} in all cases relevant for the congruence subgroup problem  (for example, we know that $M(S,G)$ is trivial if $S$ is infinite), the first of the two aspects of the congruence subgroup problem we mentioned above reduces to proving that $C^{(S)}(G)$ is central in the expected cases. Our first basic result (Theorem 4.3) is the following. This result was proved in \cite[\S 9]{PR1} in the case where $K$ is a number field.

\begin{theor}
Let $G$ be an absolutely almost simple simply connected algebraic group over a global field $K$, and assume that $({\mathrm{MP}})$ holds for $G(K)$. Then for
any finite set $V$ of nonarchimedean places of $K$ that contains the set $\mathcal{A}$ of anisotropic places and $S = V^K \setminus V$, the congruence kernel $C^{(S)}(G)$ is central and hence trivial.
\end{theor}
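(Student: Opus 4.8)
The plan is to show that $C^{(S)}(G)$ is \emph{central} in $\widehat{G}^{(S)}$; triviality is then automatic, since $V$ finite makes $S=V^K\setminus V$ infinite, whence the metaplectic kernel $M(S,G)$ is trivial (\cite{PR1}), and a central $C^{(S)}(G)$ is finite and isomorphic to the Pontrjagin dual of $M(S,G)$. (Observe that $\mathcal{A}\subseteq V$ forces $\mathcal{A}\cap S=\varnothing$, so the standing hypotheses are in force; as usual we may also assume $\rk_S G>0$.) For the centrality I would invoke the commuting-lifts criterion obtained earlier in the paper: it is enough to construct, for each $v\notin S$ — that is, for each of the \emph{finitely many} places $v\in V$ — a continuous homomorphic lift $\lambda_v\colon G(K_v)\to\widehat{G}^{(S)}$ of the inclusion of the $v$-th factor $G(K_v)\hookrightarrow\overline{G}^{(S)}=\prod_{w\in V}G(K_w)$, in such a way that $\lambda_v(G(K_v))$ and $\lambda_w(G(K_w))$ commute for $v\neq w$. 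Everything then comes down to producing these commuting lifts.

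The construction divides into two regimes. For an isotropic place $v\in V\setminus\mathcal{A}$ one has $G(K_v)=G(K_v)^{+}$, generated by the $K_v$-points of unipotent radicals of minimal parabolic $K_v$-subgroups, and I would build $\lambda_v$ by a localization argument: passing to $S'=S\cup\{v\}$ — for which $\mathcal{O}(S')$ is again semilocal, because $V$ is finite — one uses strong approximation to approximate a given element of $G(K_v)$ by elements of $G(\mathcal{O}(S'))$ that lie simultaneously in arbitrarily small $\tau_a$-neighbourhoods of the identity at the remaining (finitely many) places of $V\setminus\{v\}$; the finiteness of $V$ is exactly what makes these approximating sequences Cauchy in the arithmetic topology, so that they converge in $\widehat{G}^{(S)}$ and assemble into $\lambda_v$. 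This half is, in essence, what was done for number fields in \cite[\S 9]{PR1}. For an anisotropic place $v\in\mathcal{A}$ the group $G(K_v)$ is compact with no unipotents, so the preceding construction is vacuous and one turns to the Margulis--Platonov conjecture instead. Since $\mathcal{A}\subseteq V$, every subgroup $\delta^{-1}(U)\cap G(\mathcal{O}(S))$ ($U$ open normal in $G_{\mathcal{A}}$) is $\tau_a$-open, so the diagonal map $\delta\colon G(K)\to G_{\mathcal{A}}$ is governed by the arithmetic topology; combining this with $(\mathrm{MP})$, which describes all noncentral normal subgroups of $G(K)$ through $G_{\mathcal{A}}$, and with strong approximation at the places of $V$, one extracts a continuous section of $\pi^{(S)}$ over the factor $G(K_v)$ for each $v\in\mathcal{A}$. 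This is precisely where the hypothesis that $(\mathrm{MP})$ holds for $G(K)$ is indispensable, and it is what lets the argument cover $K$-anisotropic $G$ rather than only the $K$-isotropic case.

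It remains to verify commutativity. For $v\neq w$ the images $\lambda_v(G(K_v))$ and $\lambda_w(G(K_w))$ commute in $\overline{G}^{(S)}$, so each commutator $[\lambda_v(g),\lambda_w(h)]$ lies in the profinite group $C^{(S)}(G)$; one shows that $(g,h)\mapsto[\lambda_v(g),\lambda_w(h)]$ is a continuous bimultiplicative map into $C^{(S)}(G)$, forced to be trivial because $G(K_v)$ equals its own commutator subgroup when $v$ is isotropic (and, for interactions among places of $\mathcal{A}$, by the product structure of $G_{\mathcal{A}}$ via $(\mathrm{MP})$). Feeding the commuting lifts into the criterion yields centrality of $C^{(S)}(G)$, and the first paragraph upgrades this to $C^{(S)}(G)=1$. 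The main obstacle is the construction of the second paragraph, and above all its compatibility — producing all the $\lambda_v$ simultaneously so that the commutativity of the third paragraph actually holds; the two structural inputs that make it possible are the finiteness of $V$ (which renders $\mathcal{O}(S)$ semilocal and $\overline{G}^{(S)}$ a finite unrestricted product, and is exactly what distinguishes this situation from the generic one in which $C^{(S)}(G)$ is infinite) and the Margulis--Platonov conjecture for $G(K)$.
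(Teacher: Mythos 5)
Your proposal takes a genuinely different route from the paper, and unfortunately it has a real gap at its center. The paper does \emph{not} prove this theorem via commuting lifts. Its argument is the \emph{torus} argument of \S4: Proposition \ref{P:WA-tori} (``almost weak approximation'') produces a single integer $n$ --- depending only on the absolute rank of $G$, via Minkowski's bound on finite subgroups of $\mathrm{GL}_d(\Z)$ --- such that $\overline{T(K)}\supset T(\mathbb{A}(S))^n$ for \emph{every} maximal $K$-torus $T$ of $G$ (this is where finiteness of $V=V^K\setminus S$ is used, since $T(\mathbb{A}(S))=T_V$). Since $\pi(Z_{\widehat G}(t))\supset\overline{T(K)}$ for $t\in T(K)$, this verifies the hypothesis of the centrality criterion of Theorem~\ref{T:Criterion1}(ii) with $V=\varnothing$, and centrality follows. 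That is also what \cite[\S 9]{PR1} does over number fields; your attribution of a commuting-lifts construction to that reference is off.

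The more serious problem is that your approach, as stated, is not self-supporting. First, there is a circularity issue: Theorem~A and Proposition~\ref{P:Com-lifts2} are both \emph{deduced from} Theorem~\ref{T:SL} in this paper, so you cannot quote them. The only commuting-lifts criterion available before \S4 is Proposition~\ref{P:Com-lifts1}, whose hypothesis $(i)$ requires the triviality of the congruence kernels $C^{(S\cup V_i')}(G)$ --- which, for a partition of $V\setminus\mathcal{A}$ into singletons, is exactly Theorem~\ref{T:SL} for the smaller sets $\mathcal{A}\cup\{v_i\}$. One could conceivably set this up as an induction on $|V|$ with base case $V=\mathcal{A}$ given by (MP), but you do not do so, and you would still have to produce the lifts.

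Second, and decisively, the construction of $\lambda_v$ in your middle paragraph does not go through as written. You approximate $g\in G(K_v)$ by $\gamma_m\in G(\mathcal{O}(S'))$ that converge to $g$ at $v$ and to $1$ at the remaining $w\in V\setminus\{v\}$, and then assert that ``the finiteness of $V$ is exactly what makes these approximating sequences Cauchy in the arithmetic topology.'' This is not justified: being small in the \emph{congruence} topology at all places of $V$ does not imply being small in the \emph{arithmetic} topology $\tau_a$, and the discrepancy between these two properties is precisely what the congruence kernel measures. If such sequences were automatically $\tau_a$-Cauchy one place at a time, the theorem would follow almost for free, so this step cannot be taken for granted; it is the entire content that needs proof. (The places where one \emph{can} argue this way are the unipotent subgroups $U_{ij}(K)$, on which $\tau_a$ and $\tau_c$ do coincide --- this is what drives Examples~4.6 and 4.7 --- but a general element of $G(K_v)^+$ is a \emph{product} of unipotents, not a unipotent, and a well-defined homomorphic lift does not come out of this observation by itself.) Likewise, the extraction of a section over $G(K_v)$ for $v\in\mathcal{A}$ ``from (MP) and strong approximation'' is asserted without an argument; (MP) controls the normal-subgroup structure of $G(K)$, but turning that into a continuous group-theoretic splitting over a compact factor $G(K_v)$ requires work that is not indicated. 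In sum, the reduction to commuting lifts is a reasonable-sounding strategy, but the lifts are never actually constructed, and the paper avoids this difficulty entirely by working with centralizers of rational elements and weak approximation in tori.
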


This result  will be used to prove the following theorem which provides a new, and particularly effective, criterion for the centrality of $C^{(S)}(G)$. We observe that since $\mathrm{rk}_S \: G > 0$, it follows from the strong approximation property (cf.\,\cite{Ma-SA}, \cite{Pl-SA}, \cite{Pr-SA}; see also \cite{Ra-SA} for a recent survey) that the congruence completion $\overline{G}^{(S)}$ can be naturally identified with the group of $S$-adeles $G(\mathbb{A}(S))$, which enables us to view the group $G(K_v)$ for any $v \in V^K \setminus S$ as a subgroup of $\overline{G}^{(S)}$. As above, we let $\pi \colon \widehat{G} \to \overline{G}$ denote the natural continuous homomorphism (we suppress the superscript $(S)$).

\begin{thmA}
Let $G$ be an absolutely almost simple
simply connected algebraic group over a~global field $K$,
and let $S$ be \emph{any} subset of $V^K\setminus \mathcal{A}$ containing $V^K_{\infty}$.
Assume that for every $v \notin S$, there is  a subgroup $\cG_v$ of
$\widehat{G}$ so that the following conditions are satisfied:

\vskip2mm

$(i)$ \parbox[t]{12cm} {$\pi(\cG_v) = G(K_v)$ \ for all \ $v \notin S$;}

\vskip1mm

 $(ii)$ \parbox[t]{12cm}{$\cG_{v_1}$ and $\cG_{v_2}$ commute elementwise for all $v_1 ,
v_2 \notin S,$ $v_1 \neq v_2;$}

\vskip1mm

$(iii)$ \parbox[t]{12cm}{the subgroup generated by the $\cG_v$, for $v \notin S,$ is
dense in $\widehat{G}.$}

\vskip2mm

\noindent Then $C^{(S)}(G)$ is central in $\widehat{G}.$
\end{thmA}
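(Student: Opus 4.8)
The plan is to use the commuting lifts $\cG_v$ to build an open subgroup of $\widehat{G}$ on which the congruence kernel acts trivially by conjugation, and then leverage the earlier Theorem 4.3 to handle the remaining ``small'' set of places. First I would fix a finite subset $T \subset V^K \setminus (S \cup \mathcal{A})$ large enough that $S' := S \cup T$ satisfies $V^K \setminus S' = \mathcal{A} \cup (\text{finite set of isotropic places})$; by Theorem 4.3 applied to $S'$, the congruence kernel $C^{(S')}(G)$ is central (indeed trivial). The natural continuous surjection $\widehat{G}^{(S)} \to \widehat{G}^{(S')}$ has kernel which is (the closure of) the group generated by the lifts of $G(K_v)$ for $v \in T$; the point is to identify $C^{(S)}(G)$ with an extension built from $C^{(S')}(G)$ and these local factors, and to show centrality propagates.

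More precisely, the key steps are as follows. Step 1: Recall the standard description (going back to the work on the congruence subgroup problem, e.g.\ \cite{Ra1}) of how the congruence kernels for $S \subset S'$ are related --- there is an exact sequence relating $C^{(S)}(G)$, $C^{(S')}(G)$, and the ``difference'' coming from the places in $S' \setminus S = T$, where the relevant local groups $G(K_v)$ ($v \in T$) have trivial congruence kernels in the appropriate sense because they are complete. Step 2: Use hypothesis $(ii)$ --- that the $\cG_v$ commute elementwise --- to show that for each $v \notin S$, the subgroup $\cG_v$ centralizes $C^{(S)}(G)$. The mechanism: $C^{(S)}(G) = \ker \pi$ is contained in the closure of the group generated by $\cG_w$ for $w \neq v$, $w \notin S$ (this uses $(iii)$ together with the fact that $\pi(\cG_v) = G(K_v)$ and that dropping one factor from a generating family still generates a dense — in fact the whole — subgroup after completion, by strong approximation), hence $\cG_v$ commutes with $C^{(S)}(G)$ by $(ii)$. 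Step 3: By $(iii)$, the $\cG_v$ together generate a dense subgroup of $\widehat{G}$; since each centralizes the closed subgroup $C^{(S)}(G)$, and the centralizer of a closed subgroup is closed, the whole group $\widehat{G}$ centralizes $C^{(S)}(G)$, i.e.\ $C^{(S)}(G)$ is central.

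Actually, the cleanest route avoids Theorem 4.3 in the main argument and uses Steps 2--3 directly: the heart of the matter is Step 2, establishing that $C^{(S)}(G) \subset \overline{\langle \cG_w : w \neq v \rangle}$ for each fixed $v \notin S$. The subtlety is that $(iii)$ only gives density of the group generated by \emph{all} the $\cG_v$'s, so one must argue that omitting a single $v$ does not shrink the closure below $\widehat{G}$ modulo $C^{(S)}(G)$ --- equivalently, that $\pi$ restricted to $\overline{\langle \cG_w : w\neq v\rangle}$ is already onto $\overline{G}$. This follows because $\pi(\overline{\langle \cG_w : w \neq v\rangle}) = \overline{\langle G(K_w) : w \neq v \rangle}$ inside $\overline{G}^{(S)} = G(\A(S))$, and this closure is all of $G(\A(S))$ by strong approximation (the restricted product over $V^K \setminus S$ is not changed by removing the single factor at $v$, since that factor is recovered in the closure --- here one uses $\mathrm{rk}_S G > 0$ so strong approximation applies, and the fact that $G(\A(S))$ has no isolated factors). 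Hence $C^{(S)}(G) = \ker(\pi|_{\overline{\langle \cG_w : w\neq v\rangle}} \cdot) $ lies in that subgroup, and Step 2 goes through.

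I expect the main obstacle to be Step 2 --- precisely the verification that removing one place still yields a subgroup surjecting onto $\overline{G}^{(S)}$, and the careful bookkeeping needed because the $\cG_v$ are lifts into $\widehat{G}$ of the \emph{full} local groups $G(K_v)$ rather than into $\overline{G}$; one has to make sure that no information is lost when passing between $\widehat{G}$, $\overline{G}$, and the various closures, and that ``commute elementwise'' upgrades to ``commute with the closure'' (straightforward, since centralizers are closed) and then to the desired containment of $C^{(S)}(G)$. The role of hypothesis $(i)$ is exactly to pin down $\pi(\cG_v)$ so that this surjectivity-after-deletion argument has teeth; without it one could not conclude the closure is all of $\overline{G}^{(S)}$.
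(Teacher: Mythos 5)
There is a genuine gap in your ``cleanest route.'' Your Step~2 asserts two things that do not hold. First, you claim
$\pi\bigl(\overline{\langle \cG_w : w \neq v\rangle}\bigr) = G(\A(S))$
``by strong approximation, since that factor is recovered in the closure.'' This is false: in the restricted product $G(\A(S)) = \prod'_{w\notin S} G(K_w)$, the closure of the subgroup generated by the factors $G(K_w)$, $w\neq v$, is $G(\A(S\cup\{v\}))$ --- the missing factor at $v$ is \emph{not} recovered. Strong approximation concerns the density of $G(K)$ in $G(\A(S))$, not the density of a subproduct. Second, even if that surjectivity claim were correct, it would not give the containment you want. If $H' \subset \widehat{G}$ is a closed subgroup with $\pi(H')=\overline{G}$, this tells you $H'\cdot C = \widehat{G}$; it does not tell you $C\subset H'$. (Indeed, when $C$ is infinite one expects $H'$ to be far from containing $C$.) So the conclusion of Step~2, that each $\cG_v$ centralizes $C^{(S)}(G)$, does not follow from the density-and-commutation argument alone.

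Your Step~1 sketch (invoking Theorem~4.3 on the semi-local case) was actually the right ingredient, and you abandoned it prematurely in favor of the flawed shortcut. The fix --- and this is what the paper does through Propositions 2.4 and 2.7 (packaged as Proposition 4.5) --- is to apply the semi-local result not to the whole group but to a quotient. Fix $v\notin S\cup\mathcal{A}$, set $H_1=\overline{\langle \cG_w : w\neq v\rangle}$ and $H_2=\cG_v$. By the commutation hypothesis, $H_1' := H_1 \cap \pi^{-1}\bigl(G(\A(S\cup\{v\}\cup\mathcal{A}))\bigr)$ is normalized by both $H_1$ and $H_2$, hence is a closed normal subgroup of $\widehat{G}$, and the quotient $\widehat{G}/H_1' \twoheadrightarrow G(\A(T\setminus\mathcal{A}))$ with $T=V^K\setminus\{v\}$ inherits a splitting over $G(K)$ with dense image. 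Since $T\setminus\mathcal{A}$ is cofinite in $V^K$, Theorem~4.3 says $C^{(T\setminus\mathcal{A})}(G)$ is trivial, and the universal property (Proposition 2.1) then forces $C^{(S)}(G)\subset H_1'\subset H_1$. Now, and only now, commutation with $\cG_v$ gives that $\cG_v$ centralizes $C^{(S)}(G)$; the rest of your Step~3 is fine. In short: you need the quotient-and-universal-property mechanism to upgrade ``$H_1$ surjects onto the congruence quotient minus $v$'' into ``$H_1$ contains $C$''; density of $H_1$ alone cannot do it.
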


(We note that  this theorem was already stated in \cite[Theorem 7]{Ra-Bass} and that Proposition \ref{P:Com-lifts2} contains a somewhat more general result which is sometimes  useful.) We will show in \S \ref{S:Appl} how Theorem A can be used to establish the centrality of the congruence kernel for $G = \mathrm{SL}_{n}$, $n \geqslant 3$ and $G = \mathrm{SL}_2$ when $\mathrm{rk}_S \: G \geqslant 2$ (i.e., when the group of units $\mathcal{O}(S)^{\times}$ is infinite) - see Examples 4.6 and 4.7.

To formulate our next result, we need to recall the definition of a~\emph{generalized arithmetic progression}.
For a global field $K$, we let $V^K_f$ denote the set of all nonarchimedean places of $K$ (i.e. $V^K_f = V^K \setminus V^K_{\infty}$).
Now, let $F/K$ be a Galois extension (not necessarily abelian) with  Galois group $\mathscr{G}=
\mathrm{Gal}(F/K)$. Given $v \in V^K_f$ which is unramified in $F$, for every extension $w \vert v$ one defines
the Frobenius automorphism $\mathrm{Fr}_{F/K}(w \vert v) \in \mathscr{G}$; recall that $\mathrm{Fr}_{F/K}(w \vert v)$
for \emph{all} extensions $w \vert v$ fill a conjugacy class of $\mathscr{G}$ (cf.\,\cite[Ch.\,VII]{ANT}). Now,
fix a conjugacy class $\mathscr{C}$ of $\mathscr{G}.$

\begin{defin}
A {\it generalized arithmetic progression}
$\cP(F/K , \mathscr{C})$ is the set of all
$v \in V^K_f$ such that $v$ is unramified in $F/K$ and for some
(equivalently, any) extension $w \vert v$, the Frobenius
automorphism $\mathrm{Fr}_{F/K}(w \vert v)$ is in the conjugacy class $\mathscr{C}$.
\end{defin}

\vskip1mm

We can now formulate our next result.
\begin{thmB}\label{T:B}
Let $G$ be an absolutely almost simple simply connected algebraic group over a~global field $K$, and let $L$ be the minimal Galois extension of $K$ over
which $G$ becomes an inner form of the split group. Assume that a subset $S \subset V^K$ is disjoint from $\mathcal{A}$, contains $V^K_{\infty}$ and also contains all but finitely many places belonging to a generalized arithmetic progression $\cP(F/K , \mathscr{C})$ such that $\sigma \vert (F \cap L) = \mathrm{id}_{F \cap L}$ for some (equivalently, any) $\sigma \in \mathscr{C}$ (which is automatically true if $G$ is an inner form of the split group over $K$). Then $C^{(S)}(G)$ is central, and hence in fact trivial.
\end{thmB}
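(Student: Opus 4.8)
The plan is to verify the hypotheses of Theorem~A --- or, more comfortably, of the slightly more flexible Proposition~\ref{P:Com-lifts2} --- using Theorem~4.3 as the decisive input, and then to pass from centrality to triviality via the metaplectic kernel. I would begin with two harmless reductions. Replacing $F$ by a finite Galois extension of $K$ containing both $F$ and $L$, and $\mathscr{C}$ by the set of conjugacy classes of the larger Galois group lying over it, one may assume $F \supseteq L$; the hypothesis $\sigma\vert(F \cap L) = \mathrm{id}_{F \cap L}$ then says exactly that $\mathscr{C} \subseteq \Ga(F/L)$, so that every $v$ in $\cP := \cP(F/K,\mathscr{C})$ splits completely in $L$ and $G$ is an inner form of the split group over $K_v$. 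Since $\cP$ is infinite (Chebotarev), after discarding from it the finitely many places of bad reduction and the finitely many places of $\cP$ not in $S$, one is left with an infinite set $\cP_0 \subset \cP \cap S$ of places at which $G$ is quasi-split and hence --- being an inner form of the split group --- actually \emph{split}. Put $V := \mathcal{A} \cup (\cP \setminus S)$, a finite set of nonarchimedean places containing $\mathcal{A}$, and $\widehat{S} := V^K \setminus V \supseteq S$. By Theorem~4.3, $C^{(\widehat{S})}(G) = 1$, so $\widehat{G}^{(\widehat{S})} = \overline{G}^{(\widehat{S})} = G(\mathbb{A}(\widehat{S})) = \prod_{v \in V} G(K_v)$; one obtains a canonical continuous surjection $\rho \colon \widehat{G}^{(S)} \to \prod_{v \in V} G(K_v)$ through which the reduction $\widehat{G}^{(S)} \to \overline{G}^{(S)} \to \overline{G}^{(\widehat{S})}$ factors, and $\ker\rho$ is an extension of $\prod'_{v \in \widehat{S} \setminus S} G(K_v)$ by $C^{(S)}(G)$.

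The heart of the argument is the construction of commuting lifts $\cG_v \subset \widehat{G}^{(S)}$ of the groups $G(K_v)$ for $v \notin S$ (by Proposition~\ref{P:Com-lifts2} it is enough to do this for $v \in \widehat{S} \setminus S$). Here I would argue by approximation. Fix such a $v$ and $x \in G(K_v)$; using strong approximation relative to $S$, choose $g_k \in G(K)$ tending to $x$ at $v$ and to $1$ --- with ever increasing precision --- both at the remaining places outside $S$ and along the set $\cP_0$. Since $G$ is split at every place of $\cP_0$, the Bruhat--Tits and Chevalley structure there, together with the triviality of $C^{(\widehat{S})}(G)$, should constrain the $g_k$ enough to force $(g_k)$ to converge in $\widehat{G}^{(S)}$ to a limit $\widetilde{x}$ depending only on $x$, so that $x \mapsto \widetilde{x}$ is a homomorphism lifting $G(K_v) \hookrightarrow \overline{G}^{(S)}$; the same estimates show its image $\cG_v$ commutes with $\cG_{v'}$ for $v' \neq v$. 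The subgroup generated by all the $\cG_v$ then maps onto a dense subgroup of $\overline{G}^{(S)} = G(\mathbb{A}(S))$ and contains a dense part of $G(K)$, hence is dense in $\widehat{G}^{(S)}$; so conditions $(i)$--$(iii)$ of Theorem~A hold and $C^{(S)}(G)$ is central. Being central it is finite, hence Pontrjagin dual to the metaplectic kernel $M(S,G)$; as $S$ is infinite, $M(S,G) = 0$ by \cite{PR1}, and therefore $C^{(S)}(G)$ is trivial.

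The main obstacle will be precisely the construction of the lifts $\cG_v$ and the verification that they commute elementwise --- the difficulty being most visible when $G$ is $K$-anisotropic, for then $G(K)$ contains no nontrivial unipotent elements and the usual device of lifting the root subgroups of a $K$-split torus is unavailable. This is where the hypothesis on $\cP$ is genuinely used: the places of $\cP$, splitting completely in $L$, provide an \emph{infinite} reservoir of places inside $S$ at which $G$ is split, and it is the maximal possible size of the local groups at those places --- reflected in the triviality of the congruence kernel for the associated cofinite set --- that makes the approximating sequences converge in $\widehat{G}^{(S)}$ rather than merely in $\overline{G}^{(S)}$, and that propagates the needed commutator relations.
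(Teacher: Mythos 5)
Your overall architecture is right (reduce to proving centrality, then get triviality from the vanishing of $M(S,G)$ when $S$ contains a nonarchimedean isotropic place), and the reduction of $\cP$ to a subset of split places and the invocation of Theorem~4.3 for the cofinite set $\widehat{S}$ are sensible moves. But the central step --- constructing the commuting lifts $\cG_v$ so that Theorem~A or Proposition~\ref{P:Com-lifts2} applies --- is not carried out; you acknowledge it yourself as ``the main obstacle,'' and what you offer in its place is not an argument. The claim that a sequence $g_k \in G(K)$ converging in $\overline{G}^{(S)}$ to $(x,1,1,\dots)$ ``should'' converge in $\widehat{G}^{(S)}$ because of ``the triviality of $C^{(\widehat{S})}(G)$'' together with the split structure along $\cP_0$ is exactly what needs to be proved, not assumed: convergence in $\widehat{G}^{(S)}$ means convergence modulo \emph{every} finite-index normal subgroup of $\Gamma = G(\mathcal{O}(S))$, and the triviality of the congruence kernel for the \emph{larger} arithmetic group $G(\mathcal{O}(\widehat{S}))$ gives no a priori control over the finite-index normal subgroups of the smaller group $\Gamma$. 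If such an argument went through without further input, it would apply to any $S$ containing $V^K_\infty$, contradicting known infiniteness of $C^{(S)}(G)$ when $\rk_S G = 1$. So there is a genuine gap, not merely a deferred technicality.

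The paper's own proof avoids the lift construction entirely. It applies the centrality criterion of Theorem~\ref{T:Criterion1}$(ii)$, whose input is almost strong approximation for maximal $K$-tori $T$ --- an inclusion $\overline{T(K)}^{(S')} \supset T(\A(S'))^n$ for a uniform $n$ --- rather than commuting lifts. That inclusion is what Theorem~\ref{T:SA-tori} supplies, and it is exactly here that the hypothesis $\sigma\vert(F\cap L) = \id_{F\cap L}$ enters: Theorem~\ref{T:SA-tori} requires that $\mathscr{C}$ act trivially on $K_T \cap F$, which is guaranteed via Lemma~\ref{L:disjoint} by choosing a finite auxiliary set of places $V$ and local tori $T(v)$ so that every maximal $K$-torus $T$ with $T$ conjugate to $T(v)$ over $K_v$ for $v\in V$ has $K_T\cap F = L\cap F$. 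This gives the inclusion (\ref{E:Incl1}) for a suitable open set $U = \prod_{v\in V}\mathscr{U}(v,T(v))$, whence centrality of $C^{(S')}(G)$ for $S' \subset S$, then triviality via $M(S',G) = 1$, and finally triviality of $C^{(S)}(G)$ from the surjection $C^{(S')}(G)\to C^{(S)}(G)$. I would suggest reworking your proof along these lines: Theorem~A is not the right tool here, because the very phenomenon you are trying to exploit (the density of split places supplied by $\cP$) manifests most naturally in the strong approximation properties of tori, not in the existence of commuting unipotent lifts, which is what ultimately fails for anisotropic $G$.
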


\vskip1mm

In \S \ref{S:Adelic} we will give a new proof of Lubotzky's conjecture on the congruence subgroup property for arithmetic groups with adelic profinite completion. A profinite group $\Delta$ is called \emph{adelic} if for some $n \geqslant 1$,
there exists a continuous embedding
$\iota \colon \Delta \hookrightarrow \mathrm{GL}_n(\widehat{\mathbb Z})$, where
$\displaystyle \widehat{\mathbb Z} =
\prod_{q\ {\rm prime}} {\mathbb Z}_q.$ It was conjectured by A.\:Lubotzky
that if for $\Gamma = G(\mathcal{O}(S))$ the profinite completion $\widehat{\Gamma}$ is adelic
then $\Gamma$ has the \emph{congruence subgroup property} (CSP), i.e. the congruence kernel $C^{(S)}(G)$ is finite.
This was proved by Platonov and Sury in \cite{PS} using some rather technical
constructions developed earlier in  \cite{PR2} to establish (CSP)
for arithmetic groups with bounded generation.  Subsequently, Liebeck and
Pyber \cite{LPy} showed that {\it any finitely generated} subgroup of
${\rm{GL}}_n(\widehat{\mathbb Z})$ has bounded generation, which allows one to prove
Lubotzky's conjecture by directly quoting the results of \cite{Lu1} and
\cite{PR2} on (CSP) for arithmetic groups with bounded generation.
We note  that it is essential in both \cite{LPy} and \cite{PS}
that $\Gamma$ be finitely generated (i.e., $S$ be finite). We give a rather short
proof of Lubotzky's conjecture that does not rely on finite generation (hence is applicable
even when $S$ is infinite).
\begin{thmC}
Let $G$ be an absolutely almost
simple simply connected algebraic group defined over a number field $K,$
$S \subset V^K \setminus \mathcal{A}$ be a subset containing $V^K_{\infty}$, and $\Gamma =
G({\mathcal O}(S)).$
If the profinite completion $\widehat{\Gamma}$ is adelic
then $C^{(S)}(G)$ is central, hence finite.
\end{thmC}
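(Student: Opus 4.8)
The plan is to verify the hypotheses of Theorem~A — or, where more convenient, of its refinement Proposition~\ref{P:Com-lifts2} — by manufacturing the required commuting local lifts out of the adelic embedding. Write $\widehat{\Gamma}$ for the closure of $\Gamma = G(\mathcal{O}(S))$ in $\widehat{G} = \widehat{G}^{(S)}$; since a base of neighbourhoods of $1$ for $\tau_a$ consists of the finite-index normal subgroups of $\Gamma$, this closure is open in $\widehat{G}$, is canonically the profinite completion of $\Gamma$, and contains $C := C^{(S)}(G)$. Its image under $\pi$ is the closure $\overline{\Gamma}$ of $\Gamma$ in $\overline{G}^{(S)} = G(\mathbb{A}(S)) = {\prod}'_{v \notin S} G(K_v)$, which by strong approximation equals $\prod_{v \notin S}\overline{\Gamma}_v$, where $\overline{\Gamma}_v = \overline{G(\mathcal{O}(S))}$ is a compact open subgroup of $G(K_v)$ — equal to $G(\mathcal{O}_v)$ for all but finitely many $v$ — that is virtually pro-$p$ whenever $v$ lies over the rational prime $p$. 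Thus $C = \ker(\widehat{\Gamma} \to \overline{\Gamma})$ is the ordinary congruence kernel of the $S$-arithmetic group $\Gamma$, and it is enough to exhibit, for each $v \notin S$, a subgroup $\cG_v \leq \widehat{G}$ with $\pi(\cG_v) = G(K_v)$, the $\cG_v$ pairwise commuting and together generating a dense subgroup of $\widehat{G}$.

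Fix a continuous embedding $\iota \colon \widehat{\Gamma} \hookrightarrow \mathrm{GL}_n(\widehat{\mathbb{Z}}) = \prod_q \mathrm{GL}_n(\mathbb{Z}_q)$. For each rational prime $p$ put $\Lambda_p = \iota^{-1}(\mathrm{GL}_n(\mathbb{Z}_p))$ — the set of elements of $\widehat{\Gamma}$ whose components at all $q \neq p$ are trivial. Each $\Lambda_p$ is a closed normal subgroup of $\widehat{\Gamma}$ which $\iota$ carries isomorphically into $\mathrm{GL}_n(\mathbb{Z}_p)$, so $\Lambda_p$ is virtually pro-$p$, and $\Lambda_p$ and $\Lambda_\ell$ commute elementwise for $p \neq \ell$. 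The heart of the matter is to compare this decomposition of $\widehat{\Gamma}$ with the canonical decomposition of $\overline{\Gamma}$ by residue characteristic. On one side, since $\Lambda_p$ is virtually pro-$p$ while $\overline{\Gamma}_w$ is virtually pro-$\ell$ for $w \mid \ell$, a closed subgroup of $\overline{\Gamma}_w$ that is virtually pro-$p$ must be finite, so the image of $\Lambda_p$ in $\overline{\Gamma}_w$ is finite for every $w \nmid p$; passing to a suitable open subgroup $\Lambda_p'$ of $\Lambda_p$ one arranges $\pi(\Lambda_p') \subseteq \overline{\Gamma}^{(p)} := \prod_{w \mid p}\overline{\Gamma}_w$. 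On the other side, the complementary subgroup $\iota^{-1}(\prod_{\ell \neq p}\mathrm{GL}_n(\mathbb{Z}_\ell))$ likewise maps into a finite subgroup of each $\overline{\Gamma}_v$ with $v \mid p$, which — combined with the surjectivity of $\pi$ onto the full product $\overline{\Gamma}$ — is used to show that $\pi(\Lambda_p')$ is open in $\overline{\Gamma}^{(p)}$ and that $\prod_p \pi(\Lambda_p')$ is dense in $\overline{\Gamma}$; splitting each finite product $\overline{\Gamma}^{(p)}$ into its factors (a finite-index subgroup of a finite direct product contains the product of its intersections with the factors) yields, for $v \notin S$, pairwise commuting subgroups $\mathscr{H}_v \leq \widehat{\Gamma}$ with $\pi(\mathscr{H}_v)$ open in $\overline{\Gamma}_v$.

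It remains to enlarge these compact lifts to subgroups $\cG_v$ with $\pi(\cG_v) = G(K_v)$, retaining commutation and securing the density of the group they generate in all of $\widehat{G}$, and then to invoke the criterion. Here the assumption $S \cap \mathcal{A} = \varnothing$ is decisive: for $v \notin S$ the group $G$ is $K_v$-isotropic, so by the Kneser--Tits theorem $G(K_v) = G(K_v)^+$ is generated by finitely many root subgroups relative to a maximal $K_v$-split torus, and since for any unipotent subgroup of $G(K)$ the congruence kernel is invisible (its closures in $\widehat{G}$ and in $\overline{G}$ coincide), one can lift the missing ``dilation'' generators of the $G(K_v)$ through $\pi$ in a way compatible with the $\mathscr{H}_v$; feeding the resulting $\cG_v$ into Theorem~A (resp.\ Proposition~\ref{P:Com-lifts2}) gives that $C$ is central in $\widehat{G}$, and since $C^{(S)}(G)$ is central precisely when it is finite, we are done. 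I expect the main difficulty to lie in the adelic step: the embedding $\iota$ is a priori oblivious to the places of $K$, the pro-$p$ versus pro-$\ell$ dichotomy is essentially the only lever forcing its prime decomposition to be compatible with the residue-characteristic one, and controlling the resulting finite error terms — while simultaneously guaranteeing that the lifts generate a subgroup that is dense in $\widehat{G}$ rather than merely dense modulo $C$, and that lifts attached to distinct places over the same rational prime commute honestly rather than merely modulo $C$ — is exactly the point where the extra room built into Proposition~\ref{P:Com-lifts2}, together with the isotropy of $G$ outside $S$, must be exploited.
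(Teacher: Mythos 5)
Your proposal breaks down at the very first step, and the gap is not a technicality but the heart of the matter. You set $\Lambda_p = \iota^{-1}(\mathrm{GL}_n(\mathbb{Z}_p))$ and proceed as though these groups are large and together ``fill up'' $\widehat{\Gamma}$. But a closed subgroup $H$ of $\prod_q \mathrm{GL}_n(\mathbb{Z}_q)$ need not be the (closure of the) product of its intersections with the factors $\mathrm{GL}_n(\mathbb{Z}_q)$: since each $\mathrm{GL}_n(\mathbb{Z}_q)$ contains torsion of order prime to $q$, one can diagonally embed, say, a cyclic group of order $6$ into $\mathrm{GL}_2(\mathbb{Z}_5)\times\mathrm{GL}_2(\mathbb{Z}_7)$ so that both coordinate intersections are trivial; nothing forbids the abstract embedding $\iota$ from being ``twisted'' in this way relative to the places of $K$. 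Consequently there is no a priori reason for $\Lambda_p$ to be nontrivial, and your later step ``the complementary subgroup $\iota^{-1}(\prod_{\ell\ne p}\mathrm{GL}_n(\mathbb{Z}_\ell))$ together with surjectivity of $\pi$ forces $\pi(\Lambda_p')$ to be open'' is circular for exactly this reason: $\Lambda_p$ and $\Lambda_p^c$ need not generate $\widehat{\Gamma}$. The point of the paper's Lemma~\ref{L:adelic1}(1) --- Jordan's theorem applied to the finite pro\nobreakdash-$p$ images at primes $q\ne p$ --- is precisely to produce, from a Sylow pro\nobreakdash-$p$ subgroup $\mathcal{S}_p$ of $\widehat{\Gamma}$ (which \emph{is} guaranteed to surject onto the pro\nobreakdash-$p$ part of $\overline{\Gamma}$), an open subgroup $[\mathcal{S}_p^{(d)},\mathcal{S}_p^{(d)}]$ that $\iota$ carries into $\mathrm{GL}_n(\mathbb{Z}_p)$; only after that does one know $\cP=\iota^{-1}(\mathrm{GL}_n(\mathbb{Z}_p,p))$ has large image under $\pi$ (Lemma~\ref{L:adelic2}). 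Your Step~3 (``passing to a suitable open subgroup $\Lambda_p'$ so that $\pi(\Lambda_p')\subseteq\overline{\Gamma}^{(p)}$'') has the same flaw: a finitely generated pro-$p$ group can map onto infinitely many nontrivial finite $p$-groups, so no single open subgroup will kill the images at \emph{all} $w\nmid p$ without a uniform bound, which is again what Jordan's theorem supplies.

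There is a second, equally serious gap in the endgame. You promise to ``enlarge'' the compact lifts $\mathscr{H}_v$ to subgroups $\cG_v$ with $\pi(\cG_v)=G(K_v)$ that commute pairwise and generate a dense subgroup of all of $\widehat{G}$, and then quote Theorem~A. This is not what the paper does, and for good reason: controlling the conjugation action of the noncompact group $\widehat{G}$ on the analytic subgroup constructed inside $\widehat{\Gamma}$ requires real work (Lemmas~\ref{L:adelic3}--\ref{L:adelic5}), and the paper's endgame is genuinely different. It does not verify the hypotheses of Theorem~A. Instead it passes to the Lie algebra $\mathfrak{w}$ of a carefully chosen perfect analytic subgroup $\mathcal{W}$, proves that $\widehat{G}$-conjugates of $\mathcal{W}$ are commensurable, obtains a continuous representation $\rho\colon\widehat{G}\to\mathrm{GL}(\mathfrak{w})$, uses Proposition~\ref{P:Prop-F} to show $\rho(C)$ is finite, and then shows $V(p)\subset\mathfrak{Z}$ via Lemmas~\ref{L:A1} and \ref{L:A2}; the centrality of $C^{(S)}(G)$ then comes from $V_0=\bigcup_{p\notin\Pi}V(p)\subset\mathfrak{Z}$ together with Theorem~\ref{T:SL} (to handle the finite complement) and Proposition~\ref{P:Centr1}. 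Both the Jordan's-theorem input at the start and this representation-theoretic endgame are missing from your proposal, so as written the argument does not go through.
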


Our last result addresses the second aspect of the congruence subgroup problem, viz. the structure of the congruence
kernel $C = C^{(S)}(G)$ when it is infinite. It is known (cf. Proposition \ref{P:Prop-F}) that in this case the group $C$ is not finitely generated; for its precise structure
in certain cases see \cite{Lu0},  \cite{MSZ}, \cite{Me},  \cite{Z1}, \cite{Z2}. Lubotzky \cite{Lu3} showed however that the congruence kernel $C$ is always finitely generated as a \emph{normal} subgroup of $\widehat{\Gamma}$.  We will prove that when $K$ is a number field and $G$ is $K$-isotropic, $C$  as a normal subgroup of $\widehat{G}$ is almost generated by \emph{one} element (this result was announced more than 10 years ago and is mentioned in \cite{Lu3}, but its proof given below appears in print for the first time).
\begin{thmD}
Let $G$ be an absolutely almost simple simply connected algebraic group over a~number field $K$ with $\mathrm{rk}_K \: G = 1$. Then there exists
$c$ in $C = C^{(S)}(G)$ such that if $D$ is the closed normal subgroup of $\widehat{G}$ generated by $c$ then the quotient $C/D$ is a quotient of the metaplectic
kernel $M(S , G)$; in particular, it is a finite cyclic group.
\end{thmD}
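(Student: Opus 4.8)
The plan is to exploit the rank‑one hypothesis $\mathrm{rk}_K\,G = 1$ to produce, via strong approximation and the structure theory of the $S$-arithmetic group, a single ``thin'' generator of the congruence kernel modulo the metaplectic kernel. First I would fix a $K$-split torus $T \subset G$ of dimension one and the associated opposite root subgroups $U^{\pm}$, which are $K$-isomorphic to $\mathbb{G}_a$ (or to a Weil restriction thereof in the non‑split case, but $K$-isotropy of rank one still gives a unipotent $K$-subgroup generating $G$ together with a conjugate). The key input is that, because $\mathrm{rk}_K\,G = 1$ but $\mathrm{rk}_S\,G$ may be large, the group $G(\mathcal{O}(S))$ is generated by the $\mathcal{O}(S)$-points of these unipotent subgroups together with the units $\mathcal{O}(S)^{\times}$ acting through $T$; I would use this to show that $\widehat{G}$ is topologically generated by the closures of $U^{+}(\mathcal{O}(S))$ and $U^{-}(\mathcal{O}(S))$. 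The congruence completion $\overline{G}$ is $G(\mathbb{A}(S))$, and over it the images of these unipotent subgroups are the full adelic unipotent groups.

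Next I would analyze the restriction of the central extension
\[
1 \longrightarrow C \longrightarrow \widehat{G} \xrightarrow{\ \pi\ } \overline{G} \longrightarrow 1
\]
(once we know $C$ is central — but here we are in the infinite case, so instead I work with the maximal central quotient) to the unipotent subgroups. The point is that a one‑parameter unipotent subgroup $U^{+} \cong \mathbb{G}_a$ has the property that its adelic points $U^{+}(\mathbb{A}(S))$ have trivial (topological) Schur multiplier relative to the relevant class of extensions — indeed $\mathbb{A}(S)$ is, roughly, a product of local fields and the relevant $H^2$ is controlled — so the extension splits over the closure $\widehat{U}^{+}$ of $U^{+}(\mathcal{O}(S))$, and similarly over $\widehat{U}^{-}$. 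Choosing such splittings $s^{\pm}\colon \widehat{U}^{\pm} \to \widehat{G}$, the obstruction to $s^{+}$ and $s^{-}$ being compatible (i.e. to the subgroup they generate surjecting onto $\widehat{G}$ with the ``right'' kernel) is measured by commutators $[s^{+}(u), s^{-}(u')]$, which land in $C$ and descend to a cocycle. I would show that this cocycle factors through a single commutator relation: because $U^{+}$ and $U^{-}$ generate $G$ and the Steinberg‑type relations among them are generated, as relations, by \emph{one} relation up to the $T$‑action (this is the rank‑one phenomenon — it is essentially the statement that $\mathrm{SL}_2$ or its rank‑one analogue is defined by a single ``extra'' relation beyond the obvious ones over the relevant ring), the discrepancy subgroup $D \subset C$ is generated \emph{as a closed normal subgroup of $\widehat{G}$} by one element $c$.

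Finally I would identify the quotient $C/D$. By construction, in $\widehat{G}/D$ the two splittings become compatible, so $\widehat{G}/D$ is a central extension of $\overline{G}$ in which the images of $G(K_v)$ for $v \notin S$ lift to commuting subgroups whose union generates a dense subgroup — this is precisely the situation of Theorem A (and Proposition \ref{P:Com-lifts2}). Theorem A then forces $C/D$ to be central in $\widehat{G}/D$, hence $C/D$ is a central, and therefore finite, quotient: a finite central extension of $\overline{G}$ restricting trivially to each $G(K_v)$, $v\notin S$, which is by definition a quotient of the metaplectic kernel $M(S,G)$; by the computation of $M(S,G)$ in \cite{PR1} this is finite cyclic. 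The main obstacle I anticipate is the middle step: proving that the commutator discrepancy is generated by a \emph{single} normal generator rather than by a collection indexed by the places outside $S$ (or by generators of $\mathcal{O}(S)^{\times}$). This requires a careful presentation‑theoretic argument showing that, in the rank‑one case, the relevant relation module is cyclic over the completed group ring of $\widehat{G}$ — in effect a non‑finitely‑generated refinement of Lubotzky's finite‑normal‑generation result \cite{Lu3}, squeezed down to one generator using that $\mathrm{rk}_K G = 1$ leaves only one ``hyperbolic'' direction and hence only one essential commutator relation. Controlling the contribution of the units $\mathcal{O}(S)^{\times}$ (which can be of unbounded rank when $S$ is large) to this relation module, and showing it does not increase the number of normal generators, is the delicate point.
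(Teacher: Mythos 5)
Your plan correctly identifies several of the ingredients that the paper uses: the opposite unipotent radicals $U_{\pm\alpha}$, the fact that $\pi$ restricts to an isomorphism on their closures (so the extension ``splits'' there), measuring the obstruction by commutators, and closing the argument by the centrality criterion of Theorem A / Proposition~\ref{P:Com-lifts2}. However, the crucial middle step---that a \emph{single} element normally generates the necessary subgroup $D$---is where your proposal is essentially a placeholder, and the mechanism you gesture at (a presentation-theoretic argument that ``the relation module is cyclic'') is not what makes the proof work, nor is it clear it could be made to work directly.

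Concretely, the paper's proof goes through three reductions that your proposal does not address. First, the full commutator discrepancy is a priori indexed by \emph{pairs} of places $v_1 \neq v_2$ outside $S$; Proposition~\ref{P:Gen-CK1} shows that fixing one place $v_0$ and taking only the commutators $X(v_0,v)$ (i.e.\ $Y(v_0)$) already suffices, and this reduction rests essentially on Raghunathan's theorem \cite{Ra2} that $C^{(S\cup\{v_0\})}(G)$ is central---a nontrivial external input you never invoke. Second, even having reduced to $Y(v_0)$, the passage from that infinite family to one element $c(v_0)$ is carried out by an explicit computation specific to $\mathrm{SL}_2$ (Proposition~\ref{P:SL2-gen}): one conjugates the single commutator $[\widehat{u}^+(1_{v_0}),\widehat{u}^-(1'_{v_0})]$ by $h(t)$, uses density of $\{(t^{-2},t^2): t\in K^\times\}$ in $\mathbb{A}(S)^2$, and then uses that every element of a nonarchimedean local field is a sum of four squares together with the bimultiplicativity of the central commutator pairing; this is not a purely formal ``one Steinberg relation'' fact. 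Third, for a general rank-one $G$ (where $U_{\pm\alpha}$ may be two-step nilpotent), the paper does \emph{not} run the argument directly: it constructs a homomorphism $\mathrm{SL}_2 \to G$ via Jacobson--Morozov and applies a transfer result (Proposition~\ref{P:reduction}), which in turn requires the irreducibility of the $M$-action on $U_\alpha/U_{2\alpha}$ proved in the Appendix. Your attempt to handle general rank-one $G$ directly by a relation-module argument sidesteps all of this, and in particular there is no evident way to control the contribution of the non-abelian structure of $U_{\pm\alpha}$ or the arbitrarily large unit group $\mathcal{O}(S)^\times$ without the explicit $\mathrm{SL}_2$ computation and the $M$-irreducibility input. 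So the overall shape is right but the decisive steps are missing.
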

(We note that if $G$ is $K$-isotropic and $\mathrm{rk}_S \: G \geqslant 2$ then $C^{(S)}(G)$ is known to be central (Raghunathan \cite{Ra1}, \cite{Ra2}) hence isomorphic to a quotient $M(S , G)$, so the theorem trivially holds with $c = 1$. Thus, the core case in the theorem is where $\mathrm{rk}_S \: G = 1$.)

\vskip5mm

\section{Preliminaries on the congruence kernel}\label{S:Prelim}

Let $G\,(\hookrightarrow {\rm{SL}}_n)$ be an absolutely almost simple simply connected algebraic group over a global field $K$, $\mathcal{A}$ the finite set nonarchimedean places of $K$ where $G$ is anisotropic and $S \subset V^K$ be a nonempty subset containing $V^K_{\infty}$ when
$K$ is a number field and such that $\mathcal{A}\cap S=\varnothing$ and $\mathrm{rk}_S\: G > 0$.  Let $\Gamma = G({\mathcal{O}}(S))$. The discussion in \S \ref{S:Intro} leads to the following exact sequence of topological
groups for the congruence kernel $C = C^{(S)}(G)$:
\begin{equation}\tag{C}\label{E:C}
1 \to C \longrightarrow \widehat{G} \stackrel{\pi}{\longrightarrow} \overline{G} \to 1
\end{equation}
(we omit the superscript $(S)$ whenever possible). It is an immediate consequence of the definitions that (\ref{E:C}) splits over the group of $K$-rational points $G(K)$ in
the category of abstract groups (with the image of this splitting being dense in $\widehat{G}$). Furthermore, as we already pointed out in \S \ref{S:Intro}, it follows from
the Strong Approximation Property that the $S$-congruence completion $\overline{G}$ can be naturally identified with the group of $S$-adeles $G(\mathbb{A}(S))$. We now
recall the following {\it universal property} of (\ref{E:C}).
\begin{prop}\label{P:Univ}
Let
$$
1 \to D \longrightarrow E \stackrel{\rho}{\longrightarrow} G(\A(S))
\to 1
$$
be an exact sequence of locally compact topological groups with $D$
a profinite group. Assume that there exists a splitting $\varphi \colon
G(K) \to E$ for $\rho$ over $G(K)$ whose image is dense in $E.$ Then there
exists a continuous surjective homomorphism $\nu \colon C \to D.$ In
particular, if $C$ is trivial then so is $D.$
\end{prop}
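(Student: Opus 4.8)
The plan is to localize the whole statement to the profinite parts of the two exact sequences and then invoke the universal property of the profinite completion. Recall from \S\ref{S:Prelim} that the closure $\overline{\Gamma}$ of $\Gamma = G(\mathcal{O}(S))$ in $\overline{G} = G(\A(S))$ is a compact open subgroup with $\overline{\Gamma} \cap G(K) = \Gamma$, that the closure $\widehat{\Gamma}$ of $\Gamma$ in $\widehat{G}$ is an open subgroup canonically isomorphic to the profinite completion of $\Gamma$, and that $\pi(\widehat{\Gamma}) = \overline{\Gamma}$ while $C \subseteq \widehat{\Gamma}$; thus restricting (\ref{E:C}) yields an exact sequence $1 \to C \to \widehat{\Gamma} \xrightarrow{\pi} \overline{\Gamma} \to 1$ of profinite groups. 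On the other side I would set $W = \rho^{-1}(\overline{\Gamma}) \subseteq E$; since $\overline{\Gamma}$ is open and compact and $D$ is profinite, $W$ is an open profinite subgroup of $E$ fitting into an exact sequence $1 \to D \to W \xrightarrow{\rho} \overline{\Gamma} \to 1$.

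First I would observe that $\varphi(\Gamma) \subseteq W$, since $\rho(\varphi(\gamma)) = \gamma \in \overline{\Gamma}$, and moreover that $\varphi(\Gamma)$ is dense in $W$: as $W$ is open in $E$ and $\varphi(G(K))$ is dense in $E$, the intersection $\varphi(G(K)) \cap W$ is dense in $W$, and it equals $\varphi(\Gamma)$ because $\varphi(g) \in W$ forces $g = \rho(\varphi(g)) \in \overline{\Gamma} \cap G(K) = \Gamma$. Next, $\varphi|_{\Gamma} \colon \Gamma \to W$ is continuous for the profinite topology on $\Gamma$ coming from $\tau_a$, since the preimage of any open subgroup of $W$ has finite index in $\Gamma$ and is therefore open. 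By the universal property of the profinite completion, $\varphi|_{\Gamma}$ then extends uniquely to a continuous homomorphism $\widehat{\varphi} \colon \widehat{\Gamma} \to W$, whose image is compact, closed and contains the dense subgroup $\varphi(\Gamma)$, hence equals $W$; so $\widehat{\varphi}$ is surjective.

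It then remains to compare the two sequences. The continuous homomorphisms $\rho \circ \widehat{\varphi}$ and $\pi|_{\widehat{\Gamma}}$ from $\widehat{\Gamma}$ to $\overline{\Gamma}$ agree on the dense subgroup $\Gamma$ (both restrict to the identity map of $\Gamma$) and therefore coincide. Consequently $\widehat{\varphi}(C) \subseteq \ker(\rho|_W) = D$, and more precisely $\widehat{\varphi}^{-1}(D) = \{ x \in \widehat{\Gamma} : \pi(x) = 1 \} = C$ (using $C \subseteq \widehat{\Gamma}$). Taking $\nu = \widehat{\varphi}|_C \colon C \to D$, which is continuous, one gets $\nu(C) = \widehat{\varphi}(\widehat{\varphi}^{-1}(D)) = D \cap \im \widehat{\varphi} = D \cap W = D$ (the last equality because $D \subseteq W$). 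Thus $\nu$ is surjective, and in particular $C = 1$ forces $D = \nu(C) = 1$.

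The structural facts invoked here — that $\widehat{\Gamma}$ is open in $\widehat{G}$ and is the profinite completion of $\Gamma$, that $\pi(\widehat{\Gamma}) = \overline{\Gamma}$ and $C \subseteq \widehat{\Gamma}$, and that $\overline{\Gamma} \cap G(K) = \Gamma$ — are standard and belong to the basic setup recalled in \S\ref{S:Prelim}; granting them, the argument is a short diagram chase. The step I would flag as the real content is the passage from density of $\varphi(G(K))$ in $E$ to density of $\varphi(\Gamma)$ in the \emph{profinite} group $W = \rho^{-1}(\overline{\Gamma})$: it is precisely this that keeps the whole argument inside the profinite world and lets one apply the universal property of $\widehat{\Gamma}$ directly, so that no extension of $\varphi$ to all of $\widehat{G}$, and no appeal to completeness of $E$, is required.
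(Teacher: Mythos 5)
Your proof is correct and is essentially the argument in the paper: both pass to $W=\Omega=\rho^{-1}(\overline{\Gamma})$, show $\varphi(\Gamma)$ is dense there, invoke the universal property of the profinite completion $\widehat{\Gamma}$ to obtain a surjection $\widehat{\varphi}\colon\widehat{\Gamma}\to W$, identify $\rho\circ\widehat{\varphi}$ with $\pi|_{\widehat{\Gamma}}$ by density, and restrict to $C$. The only difference is that you spell out a few routine points (density of $\varphi(\Gamma)$ in $W$ via openness, continuity of $\varphi|_\Gamma$) which the paper leaves implicit.
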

\begin{proof}
Since the closure $\overline{\Gamma}$ of $\Gamma$ in $\overline{G}$
is an open profinite subgroup, and the group $D$ is also profinite,
we see that $\Omega : = \rho^{-1}(\overline{\Gamma})$ is an open
profinite subgroup of $E.$ Then
$$
\varphi(G(K)) \cap \Omega = \varphi(G(K) \cap \overline{\Gamma}) =
\varphi(\Gamma)
$$
is a dense subgroup of $\Omega.$ By the universal property of the
profinite completion, there exists a continuous surjective
homomorphism $\widehat{\varphi} \colon \widehat{\Gamma} \to \Omega$
which coincides with $\varphi$ on $\Gamma.$ As $\varphi$ is a
section for $\rho$ over $G(K),$ the composition $\rho \circ
\widehat{\varphi} \colon \widehat{\Gamma} \to \overline{\Gamma}$
restricts to the identity map on $\Gamma,$ and therefore coincides
with $\pi$  on $\widehat{\Gamma}.$ Since $\widehat{\varphi} \colon
\widehat{\Gamma} \to \Omega$ is surjective, we now conclude that
$$
D = \widehat{\varphi}(\widehat{\varphi}^{-1}(D)) = \widehat{\varphi}(C),
$$
so $\nu := \widehat{\varphi} \vert C$ is as required.
\end{proof}

The goal of this section is to develop some techniques that will be used later to establish the centrality
of $C$ in certain situations. For further use, it is
convenient to deal not only with the extension (\ref{E:C})
itself, but  also with its  quotients. So, let $D \subset
\widehat{G}$ be a closed normal subgroup contained in $C.$ Consider
the quotient of (\ref{E:C}) by $D$:
\begin{equation}\tag{F}\label{E:F}
1 \to F = C/D \longrightarrow H = \widehat{G}/D
\stackrel{\theta}{\longrightarrow}  G(\A(S)) \to 1.
\end{equation}
We note that just like (\ref{E:C}), the sequence (\ref{E:F}) splits over the group $G(K)$, and the map $\theta$ is open and closed.
The following set of places plays an important role in examining when (\ref{E:F}) is a central extension:
$$
\mathfrak{Z}(F) = \{ v \in V^K \setminus (S \cup \mathcal{A}) \: \vert \: \theta(Z_H(F)) \supset G(K_v) \}
$$
where $Z_H(F)$ denotes the centralizer of $F$ in $H$, the set $\mathcal{A}$ consists of those nonarchimedean
$v \in V^K$ for which $G$ is $K_v$-anisotropic, and $G(K_v)$ is naturally identified with a subgroup of $\overline{G} =
G(\mathbb{A}(S))$. We will write $\mathfrak{Z}$ for $\mathfrak{Z}(C)$ if this will not lead to a confusion.
We note that Proposition \ref{P:Univ} is independent of the Margulis-Platonov conjecture (MP), but
in the rest of this section we do invoke our standing assumption that (MP) holds for $G(K)$ and $S \cap \mathcal{A} = \varnothing$.

We begin with a couple of results that give sufficient conditions for a place $v \in V^K \setminus (S \cup \mathcal{A})$
to belong to $\mathfrak{Z}(F)$.
\begin{prop}\label{P:v-in-Z1}
Let $v \in V^K \setminus (S \cup \mathcal{A}).$ Assume there exists
a noncentral $a \in G(K_v)$ such that $a \in \theta(Z_H(x))$ for
every $x \in F.$ Then $v \in \mathfrak{Z}(F).$
\end{prop}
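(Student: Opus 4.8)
The plan is to localize the statement at $v$. I would set $P := \theta^{-1}(G(K_v)) \subseteq H$; since $\ker\theta = F \subseteq P$, this gives an extension $1 \to F \to P \xrightarrow{\theta} G(K_v) \to 1$ with $F$ profinite, and the elementary identities $Z_H(F) \cap P = Z_P(F)$ and $\theta(Z_H(F)) \cap G(K_v) = \theta(Z_P(F))$ hold. So the assertion ``$v \in \mathfrak{Z}(F)$'' is equivalent to $\theta(Z_P(F)) = G(K_v)$, i.e.\ to the statement that \emph{all} of $G(K_v)$ admits lifts to $H$ lying in the centralizer of $F$. Throughout I would use that, $v$ being nonarchimedean (as $S \supseteq V^K_\infty$) and outside $\mathcal{A}$, the group $G(K_v)$ is isotropic and simply connected over the local field $K_v$, hence (Kneser--Tits and Tits) $G(K_v) = G(K_v)^+$ is perfect and $G(K_v)/Z(G(K_v))$ is abstractly simple; in particular the normal closure of any noncentral element of $G(K_v)$ is all of $G(K_v)$, and $G(K_v)$ has no nontrivial finite quotient.

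\emph{Step 1: conjugation by $P$ acts on $F$ through class-preserving automorphisms} (those sending each element to an $F$-conjugate of itself). Fix a lift $b \in P$ of $a$. For each $x \in F$ the hypothesis gives $c_x \in Z_H(x)$ with $\theta(c_x) = a$, so $c_x \in P$ and $c_x = b g_x$ for some $g_x \in F$; then $c_x x c_x^{-1} = x$ forces $b^{-1}xb = g_x^{-1}xg_x$, a conjugate of $x$ in $F$. Hence conjugation by $b$ is class-preserving on $F$. Now two elementary remarks: the set $\mathrm{Aut}_{\mathrm{cp}}(F)$ of class-preserving automorphisms is \emph{normal} in $\mathrm{Aut}(F)$, and since any normal subgroup of $F$ is a union of conjugacy classes, every class-preserving automorphism stabilizes every normal subgroup of $F$. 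Letting $c \colon P \to \mathrm{Aut}(F)$ be the conjugation homomorphism, $c^{-1}(\mathrm{Aut}_{\mathrm{cp}}(F))$ is a normal subgroup of $P$ containing $F$ (inner automorphisms are class-preserving) and $b$; its $\theta$-image is a normal subgroup of $G(K_v)$ containing the noncentral element $a$, hence equals $G(K_v)$, and since it also contains $F = \ker(\theta|_P)$ it is all of $P$. Therefore $c(P) \subseteq \mathrm{Aut}_{\mathrm{cp}}(F)$.

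\emph{Step 2: kill the action on finite quotients and pass to the limit.} Let $U \trianglelefteq F$ be an open normal subgroup (these are cofinal in $F$). By Step 1 every $c_w$, $w \in P$, stabilizes $U$, so reduction modulo $U$ defines a homomorphism $P \to \mathrm{Aut}(F/U)$ with image inside the class-preserving automorphisms of the \emph{finite} group $F/U$. Composing with $\mathrm{Aut}_{\mathrm{cp}}(F/U) \to \mathrm{Aut}_{\mathrm{cp}}(F/U)/\mathrm{Inn}(F/U)$ gives a homomorphism from $P$ to a finite group that kills $F$ (whose image is $\mathrm{Inn}(F/U)$), hence factors through $P/F \cong G(K_v)$, hence is trivial since $G(K_v)$ has no nontrivial finite quotient. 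Thus the image of $P$ in $\mathrm{Aut}(F/U)$ equals $\mathrm{Inn}(F/U)$; equivalently, for each $w \in P$ the set $S_U(w) = \{ f \in F : (f^{-1}w)\,x\,(f^{-1}w)^{-1}x^{-1} \in U \ \forall x \in F \}$ is nonempty. Each $S_U(w)$ is closed in the compact group $F$, and $\{S_U(w)\}$ is downward directed in $U$, so $\bigcap_U S_U(w) \neq \varnothing$; any $f_0$ in this intersection satisfies $f_0^{-1}w \in Z_P(F)$ because $\bigcap_U U = \{1\}$. Hence $P = F \cdot Z_P(F)$, and applying $\theta$ gives $\theta(Z_P(F)) = \theta(P) = G(K_v)$, i.e.\ $v \in \mathfrak{Z}(F)$.

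The crux — and the step I expect to require the most care — is precisely the passage from the \emph{pointwise} information in the hypothesis (for each $x$, \emph{some} lift of $a$ centralizes $x$) to the \emph{uniform} conclusion (a full lift of $G(K_v)$ centralizes $F$). Step~1's normality of $\mathrm{Aut}_{\mathrm{cp}}(F)$ in $\mathrm{Aut}(F)$ is what allows the single good element $b$ to force \emph{all} of $P$ to act through class-preserving automorphisms, the absence of finite quotients of $G(K_v)$ rigidifies the action on each $F/U$, and compactness of $F$ assembles these into the global statement. I would also double-check the routine bookkeeping that $\theta|_P$ is onto $G(K_v)$ with kernel exactly $F$, that $Z_H(F)\cap P = Z_P(F)$, and that open subgroups of $F$ are closed (so that $S_U(w)$ is closed); all of this is immediate. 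Note that this argument uses only the structure theory of $G(K_v)$ for $v\notin\mathcal{A}$, not $(\mathrm{MP})$.
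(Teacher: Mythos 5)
Your proof is correct and follows essentially the same route as the paper's: your Step~1 is the paper's observation that the conjugation action of $H$ on the conjugacy classes of $F$ descends to $\overline{G}$, is trivial at $a$, and hence trivial on all of $G(K_v)$ by simplicity modulo center (you phrase it through the normality of $\mathrm{Aut}_{\mathrm{cp}}(F)$ in $\mathrm{Aut}(F)$, the paper through the permutation representation $\tau\colon\overline{G}\to\mathrm{Perm}(\mathcal{F})$), and your Step~2 is an inline version of the paper's Lemma~\ref{L:A1} applied to $\lambda_W\colon H_v\to\mathrm{Aut}(F/W)$, followed by the same compactness/inverse-limit argument. The differences are purely cosmetic.
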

\begin{proof}
Let $\mathcal{F}$ be the set of conjugacy classes in $F.$ The
conjugation action of $H$ on $F$ gives rise to a group homomorphism
$\overline{G} \stackrel{\tau}{\longrightarrow}
\mathrm{Perm}(\mathcal{F})$ to the group of permutations of
$\mathcal{F}.$ Our assumption means that $\tau(a) =
\mathrm{id}_{\mathcal{F}}.$ Since $G(K_v)$ does not have proper
noncentral normal subgroups (cf. \cite{Pl-SA}, and also \cite{Gille}, \cite{Pr-Rag}, \cite{Tits-Bour}), this implies
that $\tau(G(K_v)) = \{ \mathrm{id}_{\mathcal{F}} \}.$ In
particular, any open normal subgroup $W \subset F$ is normalized by
$H_v := \theta^{-1}(G(K_v)),$ so the latter acts on the finite group
$F/W.$ Let $\lambda_W \colon H_v \to \mathrm{Aut}(F/W)$ be the corresponding group
homomorphism, and set $\mathcal{L}_W = \mathrm{Ker} \: \lambda_W$. We need the following
lemma.
\begin{lemma}\label{L:A1}
Let $\kappa \colon \mathcal{H} \to \mathcal{G}$ and $\lambda \colon \mathcal{H} \to \mathcal{M}$ be two continuous homomorphisms of locally compact topological groups with
the kernels $\mathcal{K}$ and $\mathcal{L}$, respectively. Assume that

\medskip

\noindent {\rm (1)} \parbox[t]{15cm}{$\kappa$ is closed and surjective, $\mathcal{K}$ is compact, and $\mathcal{G}$ does not have
proper closed normal subgroups of finite index;}

\vskip1mm

\noindent {\rm (2)} $\mathcal{M}$ is profinite.

\medskip

\noindent Then $\mathcal{H} = \mathcal{K} \mathcal{L}$, or equivalently $\kappa(\mathcal{L}) = \mathcal{G}$.
\end{lemma}
\begin{proof}
Assume that $\mathcal{N} := \mathcal{K} \mathcal{L}$ is properly contained in $\mathcal{H}$. Then, since $\mathcal{K}$ is compact, the image
$\lambda(\mathcal{N}) = \lambda(\mathcal{K})$ is a  closed
subgroup of $\mathcal{M}$ that is properly contained in $\lambda(\mathcal{H})$. Since $\mathcal{M}$ is profinite, there exists an proper open subgroup $\mathscr{U} \subset \mathcal{M}$ that contains $\lambda(\mathcal{N})$ but not $\lambda(\mathcal{H})$. Then $\mathcal{V} := \lambda^{-1}(\mathscr{U})$ is a proper open
subgroup of $\mathcal{H}$ of finite index that contains $\mathcal{N}$. It follows that $\kappa(\mathcal{V})$ is a proper closed  subgroup of $\mathcal{G}$ of finite
index. Then the intersection of all conjugates of $\kappa(\mathcal{V})$ would be a proper \emph{normal} closed subgroup of $\mathcal{G}$ of finite index, which
by our assumption cannot exist. A contradiction.
\end{proof}

Applying the lemma to the homomorphisms $H_v \stackrel{\theta}{\longrightarrow} G(K_v)$ and $H_v \stackrel{\lambda_W}{\longrightarrow} \mathrm{Aut}(F/W)$, we obtain
that $\theta(\mathcal{L}_W) = G(K_v)$, for every open normal subgroup $W$ of $F$. Thus,
for any $g \in G(K_v)$, the fiber $\theta^{-1}(g)$ meets the closed subgroup $\mathcal{L}_W$.
Using the fact that $\mathcal{L}_{W_1} \cap \cdots \cap \mathcal{L}_{W_d} =
\mathcal{L}_{W_1 \cap \cdots \cap W_d}$ for any open normal subgroups $W_1, \ldots , W_d$ of $F$ and the compactness of $\theta^{-1}(g)$,
we conclude that
$$
\theta^{-1}(g) \bigcap \left( \bigcap_W \mathcal{L}_W \right) \neq \varnothing,
$$
where $W$ runs through all open normal subgroups of $F$. But $\bigcap_W \mathcal{L}_W$ clearly coincides with $\theta^{-1}(G(K_v)) \cap Z_H(F)$.
So, $g \in \theta(Z_H(F))$, implying that $G(K_v) \subset \theta(Z_H(F))$, hence $v \in \mathfrak{Z}(F).$
\end{proof}

\begin{prop}\label{P:v-in-Z2}
Let $V$ be a subset of $V^K \setminus S$, and let $T = V^K \setminus V$.
Assume that the congruence kernel $C^{(T \setminus \mathcal{A})}(G)$ is trivial and there exist subgroups $H_1$ and $H_2$ of $H$ such that

\medskip

\noindent \ $(i)$ \parbox[t]{15.5cm}{$\theta(H_1)$ and $\theta(H_2)$ are dense subgroups of $G(\mathbb{A}(S\cup V))$ and $G(\mathbb{A}(T))$,
respectively;}

\vskip1mm

\noindent $(ii)$ \parbox[t]{15.5cm}{$H_1$ and $H_2$ commute elementwise and together generate a dense subgroup of $H$.}

\medskip

\noindent Then $H_2$ centralizes $F$, and therefore $V \setminus \mathcal{A}$ is contained in $\mathfrak{Z}(F)$.

\end{prop}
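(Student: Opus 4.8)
\emph{Overview.} The plan is to normalize the lifts, then distill the hypothesis $C^{(T\setminus\mathcal A)}(G)=1$ into the single fact that $F$ is contained in the closed normal subgroup of $H$ generated by $H_1$, and finally conclude by a descent through the finite quotients of $F$ in the spirit of Proposition~\ref{P:v-in-Z1}. First I would replace $H_1,H_2$ by their closures in $H$; since commuting elementwise is a closed condition and $\langle H_1,H_2\rangle$ is dense, all hypotheses persist, and — $\theta$ being closed — one now has the sharp equalities $\theta(H_1)=G(\A(S\cup V))$ and $\theta(H_2)=G(\A(T))$, these being the two direct factors of $\overline G=G(\A(S))=G(\A(S\cup V))\times G(\A(T))$ dual to the partition of $V^K\setminus S$ into $T\setminus S$ and $V$. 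It follows that $H_1F$ and $H_2F$ are closed normal subgroups of $H$ (the $\theta$-preimages of the two factors) with $(H_1F)(H_2F)=H$, $(H_1F)\cap(H_2F)=F$ and $H=H_1H_2F$; in particular, because $[H_1,H_2]=1$, for any subgroup $H_1'\subseteq H_1$ that is normal in $H_1$ the $H$-conjugates of $H_1'$ are exactly its $F$-conjugates $f^{-1}H_1'f$, $f\in F$.

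Next I would exploit the vanishing of $C^{(T\setminus\mathcal A)}(G)$. Put $P=\prod'_{v\in(T\setminus S)\setminus\mathcal A}G(K_v)$, a direct factor — hence a closed normal subgroup — of $\overline G$, which is precisely the kernel of the projection $\overline G=G(\A(S))\twoheadrightarrow G(\A(T\setminus\mathcal A))$. The key observation is that \emph{every} closed normal $M\trianglelefteq H$ with $\theta(M)=P$ contains $F$: for such $M$ one has $\theta^{-1}(P)=FM$, the composite $H\xrightarrow{\theta}\overline G\to G(\A(T\setminus\mathcal A))$ kills $M$, and the resulting exact sequence
$$
1\longrightarrow FM/M\longrightarrow H/M\longrightarrow G(\A(T\setminus\mathcal A))\longrightarrow 1
$$
is an extension of locally compact groups with $FM/M\cong F/(F\cap M)$ profinite which splits over $G(K)$ with dense image (compose the splitting of~(\ref{E:F}) with $H\to H/M$); since $G(\A(T\setminus\mathcal A))=\overline G^{(T\setminus\mathcal A)}$, Proposition~\ref{P:Univ} applied with $T\setminus\mathcal A$ in place of $S$ forces $F/(F\cap M)$ to be a quotient of $C^{(T\setminus\mathcal A)}(G)=1$, so $F\subseteq M$. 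Applying this to $M$ = the closed normal subgroup of $H$ generated by $H_1':=H_1\cap\theta^{-1}(P)$ — for which $\theta(H_1')=P$ (as $\theta|_{H_1}$ is onto $G(\A(S\cup V))\supseteq P$) and hence $\theta(M)=P$ — I get $F\subseteq M=\overline{\langle\, f^{-1}H_1'f : f\in F\,\rangle}$.

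It remains to pass from this to "$H_2$ centralizes $F$". Let $\mathcal F$ be the set of conjugacy classes of $F$; as in Proposition~\ref{P:v-in-Z1} the $H$-action on $\mathcal F$ factors through $\tau\colon\overline G\to\mathrm{Perm}(\mathcal F)$. From $F\subseteq M=\overline{\langle f^{-1}H_1'f\rangle}$ and $[H_2,H_1']=1$ one sees that $\theta(H_2)$ moves a class of $F$ only through the action of $H_2$ on $F$; reduced modulo an open normal $W\trianglelefteq F$ this is an action of a finite group, so for $v\in V\setminus\mathcal A$ — whose $G(K_v)$ has no proper noncentral normal subgroup — it must be trivial, whence $\tau$ is trivial on $G(\A(T))=\theta(H_2)$. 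Then every open normal $W\trianglelefteq F$ is $H_2F$-invariant, and Lemma~\ref{L:A1} (applied to $\theta$ and to the action on $F/W$, restricting over the isotropic places of $V$) together with the compactness of the fibres of $\theta$ yields $\prod'_{v\in V\setminus\mathcal A}G(K_v)\subseteq\theta(Z_H(F))$, i.e.\ $V\setminus\mathcal A\subseteq\mathfrak Z(F)$, and — handling $H_2\cap F$ and any anisotropic places of $V$ by density — that $H_2\subseteq Z_H(F)$. I expect this last stage to be the real obstacle: the commutation $[H_2,H_1']=1$ does not transfer directly to $F$ because $H_2$ permutes the $F$-conjugates of $H_1'$ through its a priori uncontrolled action on $F$, and it is exactly to break this circularity that one must descend to the finite quotients $F/W$ and invoke the absence of finite quotients of $G(K_v)$ for isotropic $v$.
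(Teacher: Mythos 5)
Your set-up and the use of the universal property (Proposition~\ref{P:Univ}) through the quotient $H/M \twoheadrightarrow G(\mathbb{A}(T\setminus\mathcal{A}))$ are exactly right, and your $H_1' = H_1 \cap \theta^{-1}(P)$ is the same subgroup the paper uses. The gap is in the final paragraph, and you correctly sensed it: after obtaining $F \subseteq M = \overline{\langle f^{-1}H_1'f : f \in F\rangle}$, your attempt to deduce that $H_2$ centralizes $F$ is circular. You write that ``$\theta(H_2)$ moves a class of $F$ only through the action of $H_2$ on $F$,'' but that action is precisely the unknown: for $x = f^{-1}yf$ with $y \in H_1'$ and $f \in F$, one has $h_2 x h_2^{-1} = (h_2f^{-1}h_2^{-1})\,y\,(h_2 f h_2^{-1})$, and to conclude this equals $x$ you would already need $h_2$ to centralize $F$. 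Passing to $\mathrm{Perm}(\mathcal{F})$ and the finite quotients $F/W$ doesn't break the circularity, because the very data you need — that $H_2$ normalizes $W$ and acts trivially on $F/W$ — is what you are trying to establish.

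The missing observation, which collapses the whole last paragraph, is that $H_1'$ is \emph{already normal in $H$}, so $M = H_1'$ and there is nothing further to do. Indeed, $\theta^{-1}(P)$ is normal in $H$ (as $P$ is a direct factor of $\overline{G}$), so $H_1' \trianglelefteq H_1$; and $H_2$ centralizes all of $H_1$, hence normalizes $H_1'$. Therefore $N_H(H_1')$ is a closed subgroup containing $H_1$ and $H_2$, which generate a dense subgroup of $H$, so $N_H(H_1') = H$. (In fact $H_1$ itself is already normal in $H$ for the same reason.) With $H_1'$ normal, your argument gives $F \subseteq H_1' \subseteq H_1$ directly, and since $[H_1, H_2] = 1$ this yields $H_2 \subseteq Z_H(F)$; then $G(K_v) \subseteq \theta(H_2) \subseteq \theta(Z_H(F))$ for each $v \in V$, giving $V \setminus \mathcal{A} \subseteq \mathfrak{Z}(F)$. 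This is precisely what the paper does.
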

\begin{proof}
We note that $G(\mathbb{A}(S)) = G(\mathbb{A}(T)) \times G(\mathbb{A}(S \cup V))$. Replacing the subgroups $H_1$ and $H_2$ with their closures, we may
assume that they are actually closed. Since $\theta$ is a closed map, we then see that
$$
\theta(H_1) = G(\mathbb{A}(S\cup V)) \ \ \text{and} \ \ \theta(H_2) = G(\mathbb{A}(T)).
$$
Now, we define the following closed normal subgroup of $H_1$:
$$
H_1'= H_1 \cap \theta^{-1}(G(\mathbb{A}(S \cup V \cup \mathcal{A}))).
$$
It follows from condition  $(ii)$ that the normalizer $N_{H}(H_1')$ contains $H_1$ and $H_2$, hence coincides with $H$. Thus, we can take the quotient
of the extension (\ref{E:F}) by $H_1'$, which yields the following exact sequence:
$$
1 \to F / (F \cap H_1') \longrightarrow H / H_1' \longrightarrow G(\mathbb{A}(T \setminus \mathcal{A})) \to 1.
$$
We now observe that this sequence inherits from (\ref{E:F}) a splitting over $G(K)$ whose image is dense in $H/H'_1$. Since the congruence kernel $C^{(T \setminus \mathcal{A})}(G)$ is trivial
by assumption, we conclude from Proposition \ref{P:Univ} that $F \subset H_1'$, which implies that $H_2$ centralizes $F$. Then for any $v \in V$ we have
$$
G(K_v) \subset \theta(H_2) \subset \theta(Z_H(F)),
$$
proving that $v \in \mathfrak{Z}(F)$ and establishing the inclusion $V \setminus \mathcal{A} \subset \mathfrak{Z}(F)$.
\end{proof}



\medskip

%
%
%
%
%
%
%
%
%
%
%
%

\medskip

Next, we will show how information about $\mathfrak{Z}$ can be used to conclude that (\ref{E:F}) is a central extension.

\begin{prop}\label{P:Centr1}
If there exists a subset $V$ of $\mathfrak{Z}(F)$
such that the congruence kernel $C^{(S \cup V)}(G)$ is trivial, then the extension {\rm (\ref{E:F})} is central. In particular,
{\rm (\ref{E:F})} is central whenever $\mathfrak{Z}(F) = V^K \setminus
(S \cup \mathcal{A})$.
\end{prop}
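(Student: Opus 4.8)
The plan is to exploit the hypothesis via the universal property (Proposition \ref{P:Univ}) applied to a suitable quotient of the extension (\ref{E:F}). First I would form the closed normal subgroup $F' = F \cap Z_H(F)$ of $H$ — note $F'$ is normal in $H$ since both $F$ and $Z_H(F)$ are normal in $H$ — and pass to the quotient extension
\begin{equation*}
1 \to F/F' \longrightarrow H/F' \stackrel{\bar\theta}{\longrightarrow} G(\A(S)) \to 1.
\end{equation*}
By the very definition of $\mathfrak{Z}(F)$, for each $v \in V \subset \mathfrak{Z}(F)$ we have $G(K_v) \subset \theta(Z_H(F))$. The subgroup $Z_H(F)$ maps \emph{onto} $G(K_v)$, and its image in $H/F'$ centralizes $F/F'$; moreover $\theta^{-1}(G(\A(S\cup V)))$ together with the lifts of the $G(K_v)$, $v \in V$, generate a dense subgroup of $H$. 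This is exactly the setup needed to invoke Proposition \ref{P:v-in-Z2} (with the roles of $V$ and its complement suitably assigned), or more directly to run the argument of that proposition: one builds inside $H/F'$ two commuting subgroups $H_1, H_2$ with $\bar\theta(H_1)$ dense in $G(\A(S\cup V))$ and $\bar\theta(H_2)$ dense in $G(\A(V))$, where $H_2$ is assembled from the centralizing lifts of the $G(K_v)$, $v \in V$.

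The key step is then to observe that the quotient extension, restricted appropriately, inherits a splitting over $G(K)$ with dense image (this descends from the splitting of (\ref{E:F}) noted just after its definition), and that the relevant ``error'' group is governed by the congruence kernel $C^{(S\cup V)}(G)$, which is trivial by hypothesis. Concretely: after factoring out the closed normal subgroup $H_1' = H_1 \cap \bar\theta^{-1}(G(\A(S \cup V \cup \mathcal{A})))$ as in the proof of Proposition \ref{P:v-in-Z2}, one gets an exact sequence with kernel a quotient of $C^{(V \setminus \mathcal{A})}$-type data and, in the complementary bookkeeping, a copy of $C^{(S \cup V)}(G)$; triviality of the latter forces $F/F'$ to lie in the image of the centralizing part, i.e. $F = F'$, which says precisely that $F \subset Z_H(F)$ — in other words (\ref{E:F}) is central.

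For the last assertion, if $\mathfrak{Z}(F) = V^K \setminus (S \cup \mathcal{A})$ then taking $V = \mathfrak{Z}(F)$ makes $S \cup V = V^K \setminus \mathcal{A}$, so $C^{(S\cup V)}(G) = C^{(V^K \setminus \mathcal{A})}(G)$, which is trivial by the ``Theorem'' quoted in \S\ref{S:Intro} (the case $V = \mathcal{A}$, $S = V^K \setminus V$, under our standing assumption that (MP) holds); hence the first part applies and (\ref{E:F}) is central.

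The main obstacle I anticipate is the careful orchestration of the two commuting subgroups inside the quotient $H/F'$ and verifying that they genuinely satisfy the density and commutation hypotheses of Proposition \ref{P:v-in-Z2} — in particular, that the lifts coming from different places $v \in V$ can be chosen to commute with each other and with a lift of $G(\A(S \cup V))$, so that together they generate a dense subgroup of the quotient. This is where the structure of $\mathfrak{Z}(F)$ (each $G(K_v)$ lying in $\theta(Z_H(F))$ \emph{simultaneously} for all $v$, not just one at a time) is essential, and where one must be attentive to the distinction between $V$ and $V \setminus \mathcal{A}$, and to the fact that $G(\A(S)) = G(\A(S\cup V)) \times G(\A(V))$ splits off the relevant adelic factor cleanly since $V \cap S = \varnothing$.
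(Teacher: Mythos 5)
Your plan has the right outer shape---pass to a quotient of (\ref{E:F}), exploit the splitting over $G(K)$, and use Proposition \ref{P:Univ} together with the triviality of $C^{(S\cup V)}(G)$---but the normal subgroup you choose to kill is the wrong one, and the conclusion you draw from it is not centrality. The slip is in the line ``$F = F'$\,\ldots\,says precisely that $F \subset Z_H(F)$ --- in other words (\ref{E:F}) is central.'' Since $F' = F \cap Z_H(F) = Z(F)$, proving $F = F'$ would only show that $F$ is \emph{abelian}, whereas centrality of (\ref{E:F}) is the far stronger statement $F \subset Z(H)$, equivalently $Z_H(F) = H$. An abelian normal subgroup need not be central, so even granting all the intermediate bookkeeping your argument terminates at the wrong place. (There is also a subsidiary vagueness in how quotienting by $F'$ is supposed to interact with the construction of the commuting subgroups, but the conflation above is the decisive gap.)

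The paper's proof keeps working in $H$ itself and instead manufactures a closed subgroup $H_2 \subset Z_H(F)$ that, together with $H_1 := \theta^{-1}(G(\A(S\cup V)))$, fills up $H$. Concretely, with $T = V^K\setminus V$, one sets $H' = \theta^{-1}(G(\A(T))) \cap Z_H(F)$ (so $\theta(H') = G(\A(T))$ by the hypothesis $V \subset \mathfrak{Z}(F)$ plus closedness of $\theta$), uses Lemma \ref{L:A2} to get commutator homomorphisms $\gamma_a : H' \to F$ for each $a\in H_1$, and puts $H_2 = \bigcap_{a\in H_1} \ker\gamma_a$. Lemma \ref{L:A1} (applicable because $G(\A(T))$ has no proper closed finite-index normal subgroup, each $G(K_v)$, $v\in V$, being isotropic) gives $\theta(H_2) = G(\A(T))$, whence $H = H_1 H_2$ with $H_1, H_2$ commuting elementwise. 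Quotienting (\ref{E:F}) by the normal subgroup $H_2$ yields an extension of $G(\A(S\cup V))$ whose kernel must vanish by Proposition \ref{P:Univ} and $C^{(S\cup V)}(G)=1$, so $F \subset H_2 \subset Z_H(F)$; since $[H_1,H_2]=1$ and $F\subset H_2$, $H_1$ centralizes $F$ as well, hence $H = H_1H_2 \subset Z_H(F)$, which is the desired centrality. Your treatment of the final assertion---taking $V = \mathfrak{Z}(F)$ and noting $C^{(V^K\setminus\mathcal{A})}(G)=1$ under (MP)---is correct.
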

We begin with the following elementary lemma.

\begin{lemma}\label{L:A2}
Let
\begin{equation}\label{E:A-1}
1 \to \mathcal{F} \longrightarrow \mathcal{H}
\stackrel{\nu}{\longrightarrow}\mathcal{G} \to 1
\end{equation}
be an exact sequence of groups. Given a subgroup $\mathcal{H}'
\subset \mathcal{H}$ that centralizes $\mathcal{F}$ and an element
$a \in \mathcal{H}$ such that $\nu(a)$ centralizes
$\nu(\mathcal{H}'),$ the map $$\gamma_a \colon x \mapsto [a , x] =
axa^{-1}x^{-1} \ \ \text{for} \ \ x \in \mathcal{H}',$$ yields a group homomorphism $\gamma_a \colon \mathcal{H}' \to
\mathcal{F}.$
\end{lemma}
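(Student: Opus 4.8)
The plan is to verify the two assertions hidden in the statement separately: that $\gamma_a$ actually takes values in $\mathcal{F}$, and that it is multiplicative on $\mathcal{H}'$. Both are purely formal consequences of the commutator calculus together with the two standing hypotheses.

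First I would check that $\gamma_a$ is well defined as a map $\mathcal{H}' \to \mathcal{F}$. Identifying $\mathcal{F}$ with its image $\ker \nu$ via the exact sequence, it suffices to see that $\nu(\gamma_a(x)) = 1$ for every $x \in \mathcal{H}'$. Applying the homomorphism $\nu$ gives $\nu([a,x]) = [\nu(a), \nu(x)]$, and this is trivial because $\nu(x) \in \nu(\mathcal{H}')$ and $\nu(a)$ centralizes $\nu(\mathcal{H}')$ by hypothesis. Hence $\gamma_a(x) \in \mathcal{F}$.

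Next I would establish multiplicativity using the standard commutator identity $[a, xy] = [a,x]\cdot\bigl(x\,[a,y]\,x^{-1}\bigr)$, which one verifies by direct expansion. For $x, y \in \mathcal{H}'$ we have just shown $[a,y] = \gamma_a(y) \in \mathcal{F}$, and since $x \in \mathcal{H}'$ centralizes $\mathcal{F}$ by hypothesis, the conjugation is trivial: $x\,[a,y]\,x^{-1} = [a,y]$. Therefore $\gamma_a(xy) = [a,x][a,y] = \gamma_a(x)\gamma_a(y)$, so $\gamma_a$ is a group homomorphism.

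There is essentially no obstacle here; the argument is elementary and short. The only point deserving attention is the precise role of the two hypotheses: the condition on $\nu(a)$ is what forces $\gamma_a$ to land in $\mathcal{F}$, while the condition that all of $\mathcal{H}'$ (not merely $a$) centralizes $\mathcal{F}$ is exactly what converts the crossed-homomorphism identity for $[a,-]$ into an honest homomorphism. I would simply make sure to invoke each hypothesis at the right step.
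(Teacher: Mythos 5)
Your proof is correct and follows essentially the same route as the paper: apply $\nu$ and use that $\nu(a)$ centralizes $\nu(\mathcal{H}')$ to see that $\gamma_a$ lands in $\mathcal{F}$, then use the identity $[a,xy]=[a,x]\bigl(x[a,y]x^{-1}\bigr)$ together with $\mathcal{H}'$ centralizing $\mathcal{F}$ to get multiplicativity. Nothing to add.
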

\begin{proof}
For $x \in \mathcal{H}'$ we have $\nu([a , x]) = [\nu(a) , \nu(x)] =
1$ implying that $\gamma_a(x) \in \mathcal{F}.$ Furthermore, for $x
, y \in \mathcal{H}'$ we have
$$
\gamma_a(xy) = [a , xy] = [a , x](x[a , y]x^{-1}) =
\gamma_a(x)\gamma_a(y),
$$
as required.
\end{proof}
\begin{cor}\label{C:A1}
Assume that {\rm (\ref{E:A-1})} is a central extension. Then for any
elementwise  commuting subgroups $\mathcal{G}_1 , \mathcal{G}_2
\subset \mathcal{G}$, the map $$c \colon \mathcal{G}_1 \times
\mathcal{G}_2 \to \mathcal{F}, \ \ (x , y) \mapsto
[\widetilde{x} , \widetilde{y}] \ \  \text{for} \ \  \widetilde{x} \in \nu^{-1}(x), \  \widetilde{y}
\in \nu^{-1}(y),$$ is a well-defined bimultiplicative pairing.
\end{cor}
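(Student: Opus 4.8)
The final statement to prove is Corollary~\ref{C:A1}.

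The plan is to simply verify that the stated map is well defined and then check bimultiplicativity in each variable using Lemma~\ref{L:A2} and the centrality of the extension. First I would address well-definedness: given $x \in \mathcal{G}_1$ and $y \in \mathcal{G}_2$, pick any two lifts $\widetilde{x}, \widetilde{x}' \in \nu^{-1}(x)$; they differ by an element of $\mathcal{F}$, which by centrality lies in the center of $\mathcal{H}$. Similarly for lifts of $y$. So replacing $\widetilde{x}$ by $\widetilde{x} f$ with $f \in \mathcal{F}$ central does not change $[\widetilde{x}, \widetilde{y}]$, and likewise on the other side; hence $c(x,y)$ depends only on $x$ and $y$. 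One should also note that since $x$ and $y$ commute in $\mathcal{G}$, we have $\nu([\widetilde{x}, \widetilde{y}]) = [x,y] = 1$, so indeed $[\widetilde{x}, \widetilde{y}] \in \mathcal{F}$ and the map lands where claimed.

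Next I would check multiplicativity in the second variable (the first is symmetric). Fix $x \in \mathcal{G}_1$ and pick a lift $a = \widetilde{x} \in \nu^{-1}(x)$. Since $x$ commutes with all of $\mathcal{G}_2$, the element $\nu(a) = x$ centralizes $\nu(\mathcal{H}')$ where $\mathcal{H}' := \nu^{-1}(\mathcal{G}_2)$; and $\mathcal{H}'$ centralizes $\mathcal{F}$ because the extension is central (so in fact $\mathcal{F}$ is central in all of $\mathcal{H}$). Thus Lemma~\ref{L:A2} applies and gives that $\gamma_a \colon \mathcal{H}' \to \mathcal{F}$, $z \mapsto [a,z]$, is a group homomorphism. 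Restricting $\gamma_a$ to lifts of elements of $\mathcal{G}_2$ and noting (by the well-definedness just established) that $[a, \widetilde{y}]$ depends only on $y$, we get $c(x, y_1 y_2) = c(x,y_1) c(x,y_2)$ for $y_1, y_2 \in \mathcal{G}_2$. By the same argument with the roles of $\mathcal{G}_1$ and $\mathcal{G}_2$ interchanged (using $[\widetilde{x}, \widetilde{y}]^{-1} = [\widetilde{y}, \widetilde{x}]$ to move the fixed argument into the first slot of a commutator), we obtain $c(x_1 x_2, y) = c(x_1, y) c(x_2, y)$. This establishes that $c$ is bimultiplicative, completing the proof.

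I do not anticipate a genuine obstacle here — this is a routine bookkeeping argument — but the one point that needs care is making sure Lemma~\ref{L:A2} is being invoked with the correct subgroup: its hypotheses require $\mathcal{H}'$ to centralize $\mathcal{F}$ and $\nu(a)$ to centralize $\nu(\mathcal{H}')$, and one must spell out that centrality of the extension supplies the former while the hypothesis that $\mathcal{G}_1$ and $\mathcal{G}_2$ commute elementwise supplies the latter. The other mild subtlety is organizing the symmetry argument cleanly, since Lemma~\ref{L:A2} is stated only for varying the argument that sits in the second slot of the commutator; invoking the identity $[a,b]^{-1} = [b,a]$ (valid because $[a,b]$ is central) lets one reduce the first-variable multiplicativity to the already-proved case.
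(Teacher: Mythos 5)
Your proof is correct and takes essentially the same route as the paper: well-definedness via centrality, then Lemma~\ref{L:A2} applied with $\mathcal{H}' = \nu^{-1}(\mathcal{G}_2)$ to get multiplicativity in the second variable, with the first variable handled symmetrically. The only cosmetic quibble is your parenthetical "(valid because $[a,b]$ is central)"---the identity $[a,b]^{-1}=[b,a]$ holds unconditionally; what centrality of $\mathcal{F}$ actually buys you there is commutativity of $\mathcal{F}$, needed to reassemble $(c(x_1,y)c(x_2,y))^{-1}$ from $c(x_1,y)^{-1}c(x_2,y)^{-1}$.
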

Indeed, since $\mathcal{F}$ is central in $\mathcal{H}$, the commutator $[\widetilde{x} ,
\widetilde{y}]$ does not depend on the choice of lifts
$\widetilde{x} , \widetilde{y}$, making the map $c$  well-defined.  It follows
from the lemma that $c(\mathcal{G}_1 , \mathcal{G}_2)
\subset \mathcal{F},$ and that for any $x \in \mathcal{G}_1$ and
$y_1 , y_2 \in \mathcal{G}_2$ we have
$$
c(x , y_1y_2) = \gamma_{\widetilde{x}}(\widetilde{y}_1\widetilde{y}_2) =
\gamma_{\widetilde{x}}(\widetilde{y}_1)\gamma_{\widetilde{x}}(\widetilde{y}_2) =
c(x , y_1)c(x , y_2),
$$
proving that $c$ is multiplicative in the second variable. The multiplicativity in the first variable is established
by a similar computation.


\vskip3mm

{\it Proof of Proposition \ref{P:Centr1}.} To prove the first claim, we need to construct the subgroups
$H_1 , H_2$ of $H$ with the properties similar to those described in Proposition \ref{P:v-in-Z2}. Set
$H_1  = \theta^{-1}(G(\mathbb{A}(S \cup V))$ . To define $H_2$, we first consider $H' = \theta^{-1}(G(\mathbb{A}(T))) \cap Z_H(F)$, where $T =
V^K\setminus V$.  Clearly, the groups $G(K_v)$ for $v \in V$ generate
a dense subgroup of $G(\mathbb{A}(T))$. This fact has two implications relevant to our argument. First, since $V \subset \mathfrak{Z}(F)$ and $\theta$
is a closed map, the image $\theta(H')$ is a closed subgroup of $G(\mathbb{A}(T))$ containing $G(K_v)$ for all $v \in V$, hence $\theta(H') = G(\mathbb{A}(T))$.
Second, for any $v \in V$, the group $G$ is $K_v$-isotropic, and therefore $G(K_v)$ contains no proper normal subgroup of finite index (as we already mentioned above).
It follows that $G(\mathbb{A}(T))$ contains no proper \emph{closed} normal subgroup of finite index. Now, it follows from Lemma \ref{L:A2} that for any $a
\in H_1$, the map $\gamma_a \colon x \mapsto [a , x]$ defines a (continuous) group homomorphism $H' \to F$. We now consider the profinite group
$$
\mathcal{M} = \prod_{a \in H_1} F_a, \ \ \text{where} \ \ F_a = F \ \ \text{for \ all} \ \ a \in H_1,
$$
define a continuous homomorphism $\lambda \colon H' \to \mathcal{M}$, $x \mapsto (\gamma_a(x))$, and let $H_2 = \mathrm{Ker} \: \lambda$. Applying
Lemma \ref{L:A1}, we see that $\theta(H_2) = G(\mathbb{A}(T))$. This easily implies that $H = H_1 H_2$, and on the other hand, by our construction the
subgroups $H_1$ and $H_2$ commute elementwise. In particular, $H_2$ is normal in $H$, and hence (\ref{E:F}) gives rise
to the following exact sequence
$$
1 \to F/(F \cap H_2) \longrightarrow H/H_2 \longrightarrow G(\mathbb{A}(S))/G(\mathbb{A}(T)) = G(\mathbb{A}(S \cup V)) \to 1.
$$
Since $C^{(S \cup V)}(G)$ is trivial by assumption, Proposition \ref{P:Univ} implies that $F \subset H_2$. It follows that $H_1$ centralizes $F$,
and therefore so does $H = H_1H_2$ as $H_2 \subset Z_H(F)$ by our construction. (While this can be derived directly from Proposition \ref{P:v-in-Z2},
we gave an independent argument in order to avoid cumbersome notations.)

For the second assertion, we observe that for $V = V^K \setminus (S \cup \mathcal{A})$, the triviality of $C^{(S \cup V)}(G)$ is equivalent
to our standing assumption that (MP) holds for $G(K)$ and $S \cap \mathcal{A} = \varnothing$. \hfill $\Box$

\medskip

\begin{prop}\label{P:Com-lifts1}
Assume that $\mathcal{A} \cap S = \varnothing$ and  there is a partition
$V^K \setminus (S \cup \mathcal{A}) = \bigcup_{i \in I} V_i$ such that one can find subgroups $H_{\mathcal{A}}$ and
$H_i$ $(i \in I)$ of $H$ satisfying the following conditions:

\medskip

\noindent \ \ $(i)$ for each $i \in I$ and  $V'_i := \bigcup_{j \neq i} V_j$,  the congruence kernel  $C^{(S
\cup V'_i)}(G)$ is trivial;

\vskip1mm

\noindent \ $(ii)$ \parbox[t]{15.5cm}{$\theta(H_{\mathcal{A}})$ is a dense subgroup of $G_{\mathcal{A}} = \prod_{v \in \mathcal{A}} G(K_v)$
and $\theta(H_i)$ is a dense subgroup of $G(\A(V^K\setminus V_i))$ for all $i \in I$;}

\vskip1mm

\noindent $(iii)$ \parbox[t]{15.5cm}{any two of the subgroups $H_{\mathcal{A}}$ and $H_i$ for $i \in I$ commute elementwise;}

\vskip1mm

\noindent \ $(iv)$ \parbox[t]{15.5cm}{the subgroups $H_{\mathcal{A}}$ and $H_i$ for $i \in I$ generate a dense subgroup of $H$.}

\medskip

\noindent Then {\rm (\ref{E:F})} is a central extension.
\end{prop}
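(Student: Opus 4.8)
The plan is to deduce this from Propositions~\ref{P:v-in-Z2} and~\ref{P:Centr1}. Concretely, I would fix an index $i \in I$ and show that $V_i \subseteq \mathfrak{Z}(F)$; since the sets $V_i$ partition $V^K \setminus (S \cup \mathcal{A})$, this yields $\mathfrak{Z}(F) = V^K \setminus (S \cup \mathcal{A})$, and then the last assertion of Proposition~\ref{P:Centr1} gives that \eqref{E:F} is central.

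To prove $V_i \subseteq \mathfrak{Z}(F)$, I would apply Proposition~\ref{P:v-in-Z2} with $V := V_i$, $T := V^K \setminus V_i$, $H_2 := H_i$, and $H_1$ the closed subgroup of $H$ generated by $H_{\mathcal{A}}$ together with all the $H_j$ for $j \neq i$, checking the hypotheses one at a time. First, since $V^K$ is the disjoint union of $S$, $\mathcal{A}$ and the $V_j$'s, one has $T \setminus \mathcal{A} = S \cup V'_i$ with $V'_i = \bigcup_{j \neq i} V_j$, so $C^{(T \setminus \mathcal{A})}(G) = C^{(S \cup V'_i)}(G)$ is trivial by $(i)$. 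Next, the places outside $S \cup V_i$ are exactly those in $\mathcal{A} \cup \bigcup_{j \neq i} V_j$; by $(ii)$ the closure of $\theta(H_1)$ contains $G(K_v)$ for every such $v$, and a closed subgroup of the restricted product $G(\mathbb{A}(S \cup V_i)) = {\prod}'_{v \notin S \cup V_i} G(K_v)$ that contains all the local factors must be the whole group, so $\theta(H_1)$ is dense in $G(\mathbb{A}(S \cup V_i))$; similarly $\theta(H_2) = \theta(H_i)$ is dense in $G(\mathbb{A}(T)) = G(\mathbb{A}(V^K \setminus V_i))$ by $(ii)$. By $(iii)$, $H_i$ commutes elementwise with $H_{\mathcal{A}}$ and with every $H_j$, hence with $H_1$ as well (commuting being a closed condition), and by $(iv)$ the subgroups $H_1$ and $H_2 = H_i$ together generate a dense subgroup of $H$. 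Proposition~\ref{P:v-in-Z2} then gives that $H_i$ centralizes $F$ and that $V_i = V_i \setminus \mathcal{A} \subseteq \mathfrak{Z}(F)$.

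Running this for every $i \in I$ and taking the union, I would conclude $\mathfrak{Z}(F) \supseteq \bigcup_{i \in I} V_i = V^K \setminus (S \cup \mathcal{A})$, hence equality, and the second assertion of Proposition~\ref{P:Centr1} finishes the proof. The argument is essentially an assembly of the earlier results, so I do not expect a genuinely hard step; the only point requiring care is the adelic bookkeeping — namely, identifying correctly which set of places each of $H_{\mathcal{A}}$ and the $H_j$ covers, so that the "complementary" subgroup $H_1$ built from $H_{\mathcal{A}}$ and the $H_j$ $(j \neq i)$ really surjects densely onto $G(\mathbb{A}(S \cup V_i))$, and that the leftover places $T \setminus \mathcal{A}$ match the index set $S \cup V'_i$ appearing in $(i)$. (One could instead avoid introducing $H_1$ and argue by induction over a finite subfamily, as in the proof of Proposition~\ref{P:Centr1}, but the direct reduction to Proposition~\ref{P:v-in-Z2} seems cleanest.)
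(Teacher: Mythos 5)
Your proof is correct and takes essentially the same approach as the paper: fix $i$, apply Proposition~\ref{P:v-in-Z2} to deduce $V_i \subset \mathfrak{Z}(F)$, and then invoke Proposition~\ref{P:Centr1}. The only (immaterial) difference is that the paper applies Proposition~\ref{P:v-in-Z2} with $V = \mathcal{A}\cup V_i$, grouping $H_{\mathcal{A}}$ together with $H_i$ in $H_2$, whereas you take $V = V_i$ and put $H_{\mathcal{A}}$ on the $H_1$ side; both choices yield $V\setminus\mathcal{A} = V_i \subset \mathfrak{Z}(F)$.
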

\begin{proof}
Fix $i \in I$, and apply Proposition \ref{P:v-in-Z2} with $V = \mathcal{A} \cup V_i$; then the corresponding $T$
is $S  \cup  V'_i$. We let $H(T)$ be the subgroup (of $H$) generated by $H_{\mathcal{A}}$ and $H_i$, and $H(S \cup T)$ be the subgroup  generated by $H_j$ for $j \in
I \setminus \{ i \}$. Using Proposition \ref{P:v-in-Z2}, we conclude that $V_i \subset \mathfrak{Z}(F)$. Since this is true for all $i \in I$, we see that actually
$\mathfrak{Z}(F) = V^K \setminus (S \cup \mathcal{A})$. Then (\ref{E:F}) is central by Proposition \ref{P:Centr1}. (We note that in the case $\mathcal{A} = \varnothing$,
the proof in fact does not require Proposition \ref{P:Centr1}.)
\end{proof}

\medskip

The following assertion goes back to \cite{Ra-Comb} (see also \cite{Lu1}, \cite[Proposition 7.1.3]{LuSe},  \cite{PR2}).
\begin{prop}\label{P:Prop-F}
If {\rm (\ref{E:F})} is not central then $F$ possesses closed
subgroups $F_2 \subset F_1,$ both of which are normal in $H,$ such
that the quotient $F/F_1$ is finite and the quotient $F_1/F_2$ is
isomorphic to $\prod_{i \in I} \Phi_i$ where $I$ is an {\rm
infinite} set and $\Phi_i = \Phi,$ the same finite simple group, for
all $i \in I.$ Consequently, if $F$ is finitely generated then it is
central, hence finite.
\end{prop}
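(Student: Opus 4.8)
The plan is to produce the chain $F_2 \subset F_1$ by a two-step reduction. First I would handle the ``finite quotient'' part: since $C$ (hence $F$) is a profinite group, if \emph{every} open normal subgroup of $H$ contained in $F$ had central image, one could pass to the inverse limit and conclude (\ref{E:F}) is central, contrary to hypothesis. So there is an open normal subgroup $F_1$ of $H$ contained in $F$ such that the induced extension $1 \to F/F_1 \to H/F_1 \to G(\A(S)) \to 1$ is \emph{not} central; replacing $F_1$ by its core in $H$ (still open, since $F/F_1$ is finite) I may assume $F_1 \triangleleft H$ and $F/F_1$ finite. The real content is thus to analyze the non-central finite-by-adelic extension $H/F_1$ and locate inside $F_1$ a further normal subgroup $F_2 \triangleleft H$ with $F_1/F_2 \cong \prod_{i\in I}\Phi$, $I$ infinite.

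For this I would exploit the structure theory of the congruence completion together with the action of $H$ on $F$. The conjugation action of $G(\A(S))$ on the finite abelian-by-solvable quotient $F_1/(F_1\cap(\text{Frattini-type subgroup}))$ is continuous, so it factors through a finite congruence quotient $G(\cO(S)/\mathfrak a)$ for some ideal $\mathfrak a$; since (MP) and strong approximation force $G(\A(S))$ to have ``many'' simple factors (one for each $v\notin S$), a finite quotient of $G(\A(S))$ through which the action factors nontrivially must itself contain infinitely many copies of a finite simple group of Lie type (here is where the infinitude of $V^K\setminus S$ enters — note $S$ is cofinite only when excluded, and in the generic case $V^K\setminus S$ is infinite). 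Following the argument of \cite{Ra-Comb} (and its expositions in \cite{Lu1}, \cite[Prop.\ 7.1.3]{LuSe}, \cite{PR2}), non-centrality of the extension means the action of $H$ on an appropriate layer $F_1/F_2$ of $F_1$ is nontrivial on infinitely many of these simple factors; simplicity of those factors and a minimality choice of the layer then forces $F_1/F_2$ itself to be a product $\prod_{i\in I}\Phi_i$ of copies of a single finite simple group indexed by an infinite set $I$.

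Concretely the steps are: (1) reduce to $F_1$ open normal in $H$ with $F/F_1$ finite and the quotient extension non-central, as above; (2) inside $F_1$ choose, among all closed $H$-normal subgroups $F_2 \subsetneq F_1$ for which $H$ acts non-centrally on $F_1/F_2$, one that is \emph{maximal} (Zorn's lemma applies in the profinite setting), so that $F_1/F_2$ is a ``chief-type'' layer on which $H$ acts with no nontrivial proper $H$-invariant subextension; (3) identify $F_1/F_2$: it is a finite or profinite group on which $G(\A(S))$ acts through a finite quotient $\overline{G}/\overline{G}(\mathfrak a)$, irreducibly in the $H$-module sense, and non-centrally; (4) invoke the classification of such irreducible modules/chief factors for the (infinitely many) quasisimple factors of the finite group $G(\cO(S)/\mathfrak a)$ — by the results on chief factors of arithmetic/adelic groups the non-central layer splits as a product over an infinite index set $I$ of a single finite simple group $\Phi$ (it cannot be abelian, since an abelian non-central chief factor would be dual to a piece of the metaplectic kernel, which is finite and would make $F$ finite, contradicting non-finiteness forced below). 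The last consequence — ``$F$ finitely generated $\Rightarrow$ central'' — is then immediate: a finitely generated profinite group cannot surject onto an infinite product $\prod_{i\in I}\Phi$ of a fixed nontrivial finite simple group, so $F_1/F_2$, hence the whole obstruction, cannot occur.

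The main obstacle I expect is step (4): proving that the maximal non-central layer $F_1/F_2$ is forced to be a product of copies of a \emph{single} finite simple group indexed by an \emph{infinite} set, rather than, say, a single finite simple group or an abelian group. This requires simultaneously (a) ruling out abelian layers via the finiteness of the metaplectic kernel $M(S,G)$ (so that any abelian $H$-chief factor contributes only a finite piece, which can be absorbed into the ``$F/F_1$ finite'' part after enlarging $F_1$), and (b) using strong approximation to see that the $H$-action, factoring through a genuine finite congruence quotient, must be nontrivial on infinitely many local factors $G(K_v)$ simultaneously — otherwise the places where the action is trivial, being cofinite, would by Proposition \ref{P:Centr1} (or Proposition \ref{P:v-in-Z2}) already force centrality. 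Once those two points are in place, the product structure over an infinite $I$ is exactly the combinatorial group theory of \cite{Ra-Comb}, which I would cite rather than reprove.
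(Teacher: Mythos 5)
Note first that the paper offers no proof of Proposition~\ref{P:Prop-F}; it simply cites \cite{Ra-Comb}, \cite{Lu1}, \cite[Proposition 7.1.3]{LuSe}, and \cite{PR2}. So the question is only whether your outline is a correct argument. Step (1) is essentially right up to a slip: you want $F_1$ open in $F$ (not ``an open normal subgroup of $H$ contained in $F$'' --- that would force $F$ to be open in $H$ and $\overline{G}$ to be discrete) and normal in $H$, chosen so that the image of some non-commuting pair survives in $F/F_1$; such $F_1$ exist because $H$-normal open subgroups of $F$ are cofinal (any open subgroup of the compact group $F$ has only finitely many $H$-conjugates). The closing deduction --- that a finitely generated profinite group cannot surject onto $\prod_{i\in I}\Phi$ with $I$ infinite and $\Phi$ a fixed nontrivial finite simple group --- is also correct.

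The genuine gap is in step (2). Zorn's lemma does \emph{not} apply to the poset of closed $H$-normal $F_2 \subsetneq F_1$ with $F_1/F_2$ non-central: the condition is not preserved under taking the closure of the union of a chain. For example, if one had $F_1/F_2^{(n)} \cong \prod_{k>n}\Phi$ along an increasing chain, each quotient would be non-central yet $\overline{\bigcup_n F_2^{(n)}}$ would equal $F_1$, giving a trivial quotient. Even granting a maximal $F_2$, maximality would produce a \emph{minimal} non-central $H$-layer, and you give no argument that a minimal layer must be an \emph{infinite} product of copies of a fixed finite simple group rather than, say, a single finite simple group. Your attempt to exclude finite layers is circular (you appeal to ``non-finiteness forced below,'' but that infinitude is precisely the conclusion being proved), and your step (4) is internally in tension: you say the $H$-action factors through the finite group $G(\mathcal{O}(S)/\mathfrak{a})$ --- which has only finitely many simple constituents --- while simultaneously claiming this produces a product over an infinite index set $I$. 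The arguments of \cite{Lu1} and \cite{PR2} do not use a maximality reduction; the infinitude of $I$ comes from tracking how the congruence level through which the action factors must grow as one descends the inverse system of open $H$-normal subgroups of $F_1$, so that infinitely many places $v \notin S$ eventually contribute. Since you defer exactly that step to the literature while substituting a reduction that neither connects to the cited proofs nor stands on its own, the outline does not amount to a proof of the proposition.
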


\section{A criterion for centrality}

We begin with one additional notation. Let $v \in V^K$, fix a maximal $K_v$-torus $T$ of $G$, and let $T^{\mathrm{reg}}$
denote its Zariski-open subvariety of regular elements. It follows from the Implicit Function Theorem that the map
\begin{equation}\label{E:phi}
\varphi_{v , T} \colon G(K_v) \times T^{\mathrm{reg}}(K_v) \to G(K_v), \ \ (g , t) \mapsto gtg^{-1},
\end{equation}
is open; in particular, $$\mathscr{U}(v , T) := \varphi_{v , T}(G(K_v) \times T^{\mathrm{reg}}(K_v))$$ is an open
subset of $G(K_v)$. It follows from the definition that $\mathscr{U}(v , T)$ is \emph{conjugation-invariant} and \emph{solid}, i.e. intersects
every open subgroup of $G(K_v)$ (the latter property is primarily used when $v$ is nonarchimedean). Let $\theta$ be as in the short exact
sequence (\ref{E:F}) of the preceding section.

\begin{thm}\label{T:Criterion1}
$(i)$ If  extension $({\mathrm{F}})$ is central, then there exists a positive integer $n$ such that for any maximal
$K$-torus $T$ of $G$ and any $t \in T(K)$, we have the inclusion
\begin{equation}\label{E:Incl1}
\theta(Z_H(t)) \supset T(\mathbb{A}(S))^n
\end{equation}
$($here we view the group of $S$-adeles $T(\mathbb{A}(S))$ as a subgroup of $G(\mathbb{A}(S)) = \overline{G}$, and use $t$ to denote
also the lift of $t \in T(K)$ in $H$ provided by the splitting of {\rm (\ref{E:F})} over $G(K))$.

\medskip

\noindent $(ii)$ \parbox[t]{16.3cm}{Conversely, assume that there is an integer $n > 1$, a finite subset $V \subset V^K \setminus S$ and maximal
$K_v$-tori $T(v)$ of $G$ for $v \in V$ such that for any element  $t \in G(K) \cap U$ with $U = \prod_{v \in V} \mathscr{U}(v , T(v))$, which is regular semi-simple\footnotemark,    the inclusion (\ref{E:Incl1}) holds with $T = Z_G(t)^{\circ}$. Then {\rm(\ref{E:F})} is a central extension.}
\footnotetext{Of course, any $t \in G(K) \cap U$ is automatically regular semi-simple if $V \neq \varnothing$.}
%
%
\end{thm}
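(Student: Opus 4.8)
\smallskip

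The plan is to prove the two implications separately, using the machinery of $\mathfrak{Z}(F)$ and the propositions of \S\ref{S:Prelim}.

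\smallskip

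For part $(i)$, assume $(\mathrm{F})$ is central. The first thing I would do is invoke Corollary \ref{C:A1}: centrality gives, for any pair of elementwise commuting subgroups $\mathcal{G}_1, \mathcal{G}_2 \subset G(\mathbb{A}(S))$, a bimultiplicative pairing $c \colon \mathcal{G}_1 \times \mathcal{G}_2 \to F$. Now fix a maximal $K$-torus $T$ of $G$ and $t \in T(K)$; using the splitting $G(K) \hookrightarrow H$, view $t$ as an element of $H$. The element $t$ centralizes $T$, hence $T(K)$, so for the lift $\tilde t$ the commutator map $x \mapsto [\tilde t, x]$ on $\theta^{-1}(T(\mathbb{A}(S)))$ descends (by Lemma \ref{L:A2} and centrality) to a continuous homomorphism $\gamma_{\tilde t} \colon T(\mathbb{A}(S)) \to F$. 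Since $F$ is profinite and $T(\mathbb{A}(S))$ is a locally compact abelian group, the crucial point is that the image of $\gamma_{\tilde t}$ is a finite group: this follows because $F = C/D$ need not itself be finite, but the homomorphism factors through the maximal profinite abelian quotient on which $T(\mathbb{A}(S))$ can act, and one knows (cf.\ the structure of $C$ via Proposition \ref{P:Prop-F}, or more directly via the metaplectic-kernel description in the central case) that $F$ is finite when $(\mathrm{F})$ is central — so the homomorphism $\gamma_{\tilde t}$ has finite image, say of exponent dividing some $n_0$. Then $T(\mathbb{A}(S))^{n_0} \subset \mathrm{Ker}\,\gamma_{\tilde t}$, which says precisely that $\tilde t$ commutes with $\theta^{-1}(T(\mathbb{A}(S))^{n_0})$ modulo $F$, i.e.\ $\theta(Z_H(t)) \supset T(\mathbb{A}(S))^{n_0}$. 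The only subtlety is uniformity of $n$ over all tori $T$ and all $t$; here I would use that $F$ is a fixed finite group (in the central case), so its exponent works as a universal $n$ independent of $T$ and $t$. This handles $(i)$.

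\smallskip

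For part $(ii)$, the strategy is to show $\mathfrak{Z}(F) = V^K \setminus (S \cup \mathcal{A})$ and then quote Proposition \ref{P:Centr1}. Pick any $w \in V^K \setminus (S \cup \mathcal{A})$; I want to show $w \in \mathfrak{Z}(F)$, for which by Proposition \ref{P:v-in-Z1} it suffices to produce a noncentral $a \in G(K_w)$ lying in $\theta(Z_H(x))$ for every $x \in F$. The idea is to find a single regular semisimple $t \in G(K)$ lying in the open solid set $U = \prod_{v \in V}\mathscr{U}(v, T(v))$, and moreover such that $t$, viewed in $G(K_w)$, is noncentral — this is possible because $\mathscr{U}(v, T(v))$ is open and solid in $G(K_v)$, so by strong approximation (applied with the finite set $V \cup \{w\}$, using $\mathrm{rk}_S\,G > 0$) one can choose $t \in G(K)$ that simultaneously lies in each $\mathscr{U}(v,T(v))$ for $v \in V$ and is noncentral regular semisimple at $w$; a small open condition at $w$ can be imposed since $G(K)$ is dense. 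For such $t$, set $T = Z_G(t)^\circ$, a maximal $K$-torus, and apply the hypothesis: $\theta(Z_H(t)) \supset T(\mathbb{A}(S))^n$. Since $t \in T(K)$ and $t$ is noncentral in $G(K_w)$, and since $T(\mathbb{A}(S))^n$ contains (an open neighborhood of the identity in, hence a noncentral element of) $T(K_w) \subset T(\mathbb{A}(S))$ — here one uses that raising to the $n$-th power on the $K_w$-torus $T$ has open image, so $T(K_w)^n$ contains noncentral regular elements of $G(K_w)$ — we obtain a noncentral $a \in G(K_w)$ with $a \in \theta(Z_H(t)) \subset \theta(Z_H(F))$... but wait, this only gives that $a$ lies in $\theta(Z_H(t))$, and $t$ need not lie in $Z_H(F)$. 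To fix this I would argue as in Proposition \ref{P:v-in-Z1}: the point is that $a$ acts trivially on the set of conjugacy classes of $F$ — indeed $a \in T(\mathbb{A}(S))^n$ and $Z_H(t)$ centralizes $t$; more carefully, one shows that $[a', x] \in F$ depends only on the conjugacy class of $x$ and on $a'\,\mathrm{mod}\,Z_H(F)$-type data — so that the hypothesis (\ref{E:Incl1}), which says the whole group $T(\mathbb{A}(S))^n$ lifts into the centralizer of $t$, forces $\tau(a) = \mathrm{id}_{\mathcal{F}}$ in the notation of Proposition \ref{P:v-in-Z1}, and then that proposition applies verbatim to give $w \in \mathfrak{Z}(F)$. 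As $w$ was arbitrary in $V^K \setminus (S \cup \mathcal{A})$, Proposition \ref{P:Centr1} yields that $(\mathrm{F})$ is central.

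\smallskip

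\textbf{Main obstacle.} I expect the hard part to be the argument in $(ii)$ that converts the torus-level inclusion (\ref{E:Incl1}) for $T = Z_G(t)^\circ$ into the hypothesis of Proposition \ref{P:v-in-Z1} at the place $w$ — i.e.\ producing from a \emph{single} adelic torus the required noncentral element of $G(K_w)$ that centralizes \emph{all} of $F$ (not just $t$), and doing so simultaneously for every $w \notin S \cup \mathcal{A}$ using only the \emph{finite} set $V$ from the hypothesis. This is where one must combine strong approximation (to place $t \in G(K) \cap U$ with prescribed behavior at $w$), the openness of the power map on $K_w$-tori, and a commutator/conjugacy-class bookkeeping argument in the spirit of Proposition \ref{P:v-in-Z1}. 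Part $(i)$, by contrast, should be routine once one has the finiteness of $F$ in the central case in hand.
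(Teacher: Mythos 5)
Part $(i)$ of your proposal is essentially the paper's argument, modulo a brief detour: you momentarily entertain the possibility that $F$ is not finite before correctly settling on the fact (via the computation of the metaplectic kernel) that centrality of $(\mathrm{F})$ forces $F$ to be finite, so that $n = |F|$ works uniformly. The paper goes straight to this point and then applies Lemma~\ref{L:A2} to $\gamma_t \colon \theta^{-1}(T(\mathbb{A}(S))) \to F$ exactly as you do.

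Part $(ii)$ has a genuine gap, and you correctly flag it yourself in your ``main obstacle'' paragraph but do not close it. Your argument produces, from a single $t \in G(K)\cap U$, a noncentral $a\in G(K_w)$ lying in $\theta(Z_H(t))$. But Proposition~\ref{P:v-in-Z1} requires $a \in \theta(Z_H(x))$ for \emph{every} $x\in F$, and the hypothesis of $(ii)$ gives control only over $Z_H(t)$ for $t$ in the image of the $G(K)$-splitting --- it says nothing directly about $Z_H(x)$ for $x \in F$. You notice this (``$t$ need not lie in $Z_H(F)$'') and propose a fix phrased as ``one shows that $[a',x]$ depends only on the conjugacy class of $x$ and \ldots\ forces $\tau(a)=\mathrm{id}_{\mathcal F}$,'' but this is precisely the assertion to be proved, not an argument for it; there is no mechanism in your sketch that turns commutation with a lift of a $K$-rational $t$ into commutation with arbitrary elements of $F$. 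What the paper actually does is fundamentally different: it does \emph{not} fix a single $t$, but instead proves the arithmetic approximation statement of Proposition~\ref{P:Central1} --- for each finite-index normal $N\subset\Gamma$ and each $x\in\overline N\cap\Gamma$, one picks a \emph{correction} $y\in N$ so that $t:=xy$ is a regular semisimple element of $\Gamma\cap U$, applies the hypothesis (\ref{E:Incl1}) to \emph{this} $t$ to produce $s\in Z_H(t)$ with a prescribed image, and shows $(\Gamma\cap U)^n\subset Z(N,x)(\overline N\cap\Gamma)$. A compactness/inverse-limit argument over the open normal subgroups $R$ of $\Delta=\theta^{-1}(\overline\Gamma)$ then upgrades this to $\theta(Z_H(x))\supset(\overline{\Gamma\cap U})^n$ for all $x\in F$, after which Propositions~\ref{P:v-in-Z1} and~\ref{P:Centr1} finish as you intended. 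The essential idea you are missing is this variation of $t$ with the level of approximation of $x$; without it, the hypothesis about $K$-rational tori cannot reach the congruence kernel.
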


\medskip

\noindent {\it Proof of $(i)$.} Assume (\ref{E:F}) is central. Then the finiteness of the metaplectic kernel $M(S , G)$ \cite[Theorem 2.7]{PR1} implies
that $F$ is finite (cf.\,\cite[3.4--3.6]{PR-Milnor}). Set $n = \vert F \vert$. Now, let $T$ be a maximal $K$-torus of $G$, let $t \in T(K)$, and let $\mathscr{T} = \theta^{-1}(T(\mathbb{A}(S)))$.
By Lemma \ref{L:A2},  the map $\gamma_t \colon x \mapsto [t , x]$
yields a group homomorphism $\mathscr{T} \to F$.  It follows that $\gamma_t(\mathscr{T}^n) = \{ 1 \}$, i.e. $\mathscr{T}^n \subset Z_H(t)$. On the other
hand, $\theta(\mathscr{T}^n) = T(\mathbb{A}(S))^n$, and our assertion follows.  \hfill $\Box$

\medskip

For the proof of part $(ii)$ we need the following proposition in which we use $\overline{X}$ and $\widehat{X}$ to denote the closure of a subset $X \subset G(K)$
in $\overline{G}$ and $\widehat{G}$, respectively.
\begin{prop}\label{P:Central1}
{\rm (Cf.\,\cite[Proposition 3.2]{Rag-SL1})} Assume that there exist a positive integer $n$, a finite set of places $V \subset V^K \setminus S$ and maximal $K_v$-tori $T(v)$ of $G$ for
$v \in V$  such that for every regular semi-simple element $t \in G(K) \cap U$, where $U = \prod_{v \in V} \mathscr{U}(v , T(v))$, the inclusion (\ref{E:Incl1}) holds for $T= Z_G(t)^{\circ}$. Then for any normal subgroup $N$ of $\Gamma = G(\mathcal{O}(S))$
of finite index  and any $x \in \overline{N} \cap \Gamma$, we have
\begin{equation}\label{E:Incl2}
Z(N , x)(\overline{N} \cap \Gamma) \supset (\Gamma \cap U)^n,
\end{equation}
where $Z(N , x) := \{ \gamma \in \Gamma \, \vert \, [x , \gamma] \in N \}$.
\end{prop}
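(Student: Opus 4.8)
\textbf{Proof proposal for Proposition \ref{P:Central1}.}

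The plan is to transfer the adelic inclusion (\ref{E:Incl1}), which lives in the congruence completion, into the purely arithmetic statement (\ref{E:Incl2}) about the finite-index normal subgroup $N \subset \Gamma$. The basic dictionary is that, since $N$ has finite index in $\Gamma = G(\mathcal{O}(S))$, its closure $\overline{N}$ in $\overline{G}$ is an open subgroup, and $\overline{N} \cap \Gamma = N$ up to finite index considerations — more precisely $\overline{N}\cap\Gamma$ is again a finite-index subgroup of $\Gamma$ containing $N$, and it suffices to work with it. First I would fix $x \in \overline{N}\cap\Gamma$ and consider the map $\gamma \mapsto [x,\gamma]$; the target group $Z(N,x) = \{\gamma \in \Gamma : [x,\gamma] \in N\}$ is precisely the preimage under this commutator map of the subgroup $N$. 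So the goal (\ref{E:Incl2}) says: every $n$-th power from $\Gamma \cap U$, after multiplying by something in $\overline{N}\cap\Gamma$, lies in $Z(N,x)$; equivalently, for $\gamma$ an $n$-th power of an element of $\Gamma\cap U$, the commutator $[x,\gamma]$ lies in $N \cdot (\text{commutators } [x, \overline{N}\cap\Gamma])$, and one checks the latter is absorbed.

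The key step is the following. Take $\gamma_0 \in \Gamma \cap U$ — since $V \neq \varnothing$ (or by the footnote, automatically) $\gamma_0$ is regular semisimple, so $T := Z_G(\gamma_0)^\circ$ is a maximal $K$-torus and $\gamma_0 \in T(K)$. The hypothesis gives $\theta(Z_H(\gamma_0)) \supset T(\mathbb{A}(S))^n$ — but here we are in the untruncated extension (\ref{E:C}), i.e. $H = \widehat G$, $\theta = \pi$, $F = C$. I would instead phrase things directly in $\widehat G$: passing to closures, $\widehat N := \overline{N}^{\,\widehat G}$ is open in $\widehat G$ and $\pi^{-1}(\overline N) = \widehat N \cdot C$. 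The inclusion (\ref{E:Incl1}) says that for each $s \in T(\mathbb{A}(S))$, the element $s^n$ lifts to some $\tilde s \in \widehat G$ commuting with (the canonical lift of) $\gamma_0$. Now $\gamma_0^n \in \Gamma \cap U$ has image $\gamma_0^n \in T(\mathbb{A}(S))$, so choose $s = \gamma_0$: the lift $\widetilde{\gamma_0^{\,n}}$ provided by the $G(K)$-splitting of (\ref{E:C}) is then compared with the lift $\tilde s$ commuting with $\gamma_0$; they differ by an element of $C$, which is central modulo... — no, $C$ need not be central, that is exactly what we are trying to prove, so I must be careful: the argument should only use that $C = \ker\pi$, not centrality.

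Here is the clean route. For a regular semisimple $\gamma_0 \in \Gamma \cap U$ with torus $T$, property (\ref{E:Incl1}) says $\pi(Z_{\widehat G}(\gamma_0)) \supset T(\mathbb{A}(S))^n \supset (\Gamma\cap U)^n$-image. So given any $\gamma \in (\Gamma\cap U)^n$ of the form $\gamma = \gamma_0^n$ — and more generally, by a bimultiplicativity/telescoping argument, any product of such — there is $h \in Z_{\widehat G}(\gamma_0)$ with $\pi(h) = \gamma$ (viewed in $\overline G$ via strong approximation, where $\gamma \in \Gamma$). Since also the splitting gives a lift $\gamma \in \widehat G$ of the same adelic element, we get $\gamma = h \cdot c$ with $c \in C$. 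Now I would apply this inside the quotient by $\overline N$: since $\overline{N}$ is open and $x \in \overline N \cap \Gamma$, one has $[x, h] = 1$, hence $[x,\gamma] = [x, hc] = {}^h[x,c]\cdot[h,c]$-type expansion, but the point is that $[x,\gamma]$ differs from $1$ only by elements built from $c \in C \subset \overline{N}$-preimage... The cleanest formulation: work in $\widehat\Gamma / \widehat N$ (a finite group), note $C$ maps onto $\ker(\widehat\Gamma/\widehat N \to \overline\Gamma/\overline N)$, and deduce $[x,\gamma] \in \widehat N \cdot (\text{stuff})$, then pull back to $\Gamma$.

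\textbf{Steps, in order.} (1) Reduce to showing: for each regular semisimple $\gamma_0 \in \Gamma\cap U$ and each $x \in \overline N\cap\Gamma$, the commutator $[x, \gamma_0^{\,n}]$ lies in $Z(N,x)$-modulo-$(\overline N\cap\Gamma)$, i.e. $[x,\gamma_0^n] \in N\cdot(\overline N\cap\Gamma\text{-commutators})$; then use that $(\Gamma\cap U)^n$ is generated, as the hypothesis is stated elementwise, by such powers (this needs $U$ conjugation-invariant — which is recorded in the text — so that $\gamma(\Gamma\cap U)\gamma^{-1} = \Gamma\cap U$, giving enough multiplicativity). (2) For fixed $\gamma_0$, set $T = Z_G(\gamma_0)^\circ$, a maximal $K$-torus; apply the hypothesis (\ref{E:Incl1}) to get $h \in Z_H(\gamma_0)$ with $\theta(h) = $ the image of $\gamma_0^n$ in $T(\mathbb{A}(S)) \subset G(\mathbb{A}(S))$ (legitimate since $\gamma_0^n \in \Gamma$ and $\Gamma \hookrightarrow \overline G$). (3) Compare $h$ with the canonical lift $\gamma_0^n \in H$ from the $G(K)$-splitting: $\gamma_0^n = h f$ with $f = h^{-1}\gamma_0^n \in F = \ker\theta$. (4) Compute $[x, \gamma_0^n] = [x, hf]$; since $x \in \overline N\cap\Gamma$ lifts into $H$ and, as $\overline N$ is open and $\gamma_0$ regular semisimple, a commutation relation or at least $\theta([x,h]) = [x, \theta(h)]$ control is available — expand and conclude $[x,\gamma_0^n] \equiv [x,f] \pmod{N}$ up to controllable error, and $[x,f] \in F$ lies in the part of $F$ already inside $\overline N$ (because $\theta$ kills it and $\overline N = \theta^{-1}(\overline N)\cap H$ contains $F$ when... ) — i.e. $F \subset $ the closure of $N$ in $H$ exactly when the corresponding quotient extension is handled, which is why one passes to $H/\overline N^{H}$. (5) Descend everything to $\Gamma$ by intersecting with $\Gamma$ and using $\overline N \cap \Gamma \supset N$.

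\textbf{Main obstacle.} The delicate point is Step (4): one must extract an honest statement in the \emph{abstract} group $\Gamma$ (namely $[x,\gamma] \in N$ up to $\overline N\cap\Gamma$) from a statement about \emph{lifts in the completion} $H$, without assuming $C$ (equivalently $F$) is central — since centrality of (\ref{E:F}) is the ultimate conclusion of Theorem \ref{T:Criterion1}(ii), invoking it here would be circular. The resolution is to observe that $N$ is \emph{open} in $\Gamma$, so $\overline N$ is open in $\overline G$ and $H$, and thus $F = \ker\theta \subset \overline N^{\,H}$ automatically (because $\theta(F) = 1 \subset \overline N$ and $\overline N^{\,H} = \theta^{-1}(\overline N^{\,\overline G})$); hence the error term $[x,f]$ and indeed all of $F$ sits inside $\overline N^{\,H}$, which meets $\Gamma$ in a finite-index subgroup containing $N$ — precisely the "$\overline N\cap\Gamma$" slack allowed in (\ref{E:Incl2}). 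So the real content is organizing the commutator expansion so that everything not landing in $N$ lands in $\overline N\cap\Gamma$; this is a bookkeeping argument with short exact sequences, parallel to \cite[Proposition 3.2]{Rag-SL1}, and I expect it to go through once the lifts are chosen as above.
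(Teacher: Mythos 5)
Your proposal has a genuine gap, and it lies exactly where you flag the ``delicate point.''

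The core issue is the choice of which element to apply hypothesis (\ref{E:Incl1}) to. You apply it to $\gamma_0 = z \in \Gamma \cap U$, obtaining $h \in Z_H(\gamma_0)$ with $\theta(h)$ equal to (the image of) $\gamma_0^n$, and then try to analyze $[x, \gamma_0^n] = [x, hf]$ where $f = h^{-1}\gamma_0^n \in F$. But $h$ centralizes $\gamma_0$, \emph{not} $x$, so the term $[x,h]$ is completely uncontrolled: $\theta([x,h]) = [x, z^n]$ is just an arbitrary element of $\Gamma$, and showing it lies in $\overline{N}$ up to controllable error is essentially the statement to be proved. Your earlier remark ``one has $[x,h]=1$'' is simply false, and your later resolution --- that $F \subset \overline{N}^{\,H}$ ``automatically'' because $\overline{N}^{\,H} = \theta^{-1}(\overline{N}^{\,\overline{G}})$ --- is also false: the closure of $N$ in $H = \widehat{G}$ is taken in the \emph{arithmetic} topology, which is strictly finer than the pullback of the congruence topology, and $\overline{N}^{\,H} \subsetneq \theta^{-1}(\overline{N}^{\,\overline{G}})$ whenever $C \neq 1$. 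Indeed, if $F \subset \overline{N}^{\,H}$ held for every finite-index normal $N$, then $C \subset \bigcap_N \widehat{N} = \{1\}$, making the proposition vacuous.

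The paper's proof reverses the roles of $x$ and $z$: the hypothesis is applied not to $z$ but to a carefully chosen perturbation $t = xy$ of $x$ (with $y \in N$), arranged so that $t$ is regular semi-simple \emph{and} lies in $U$. Concretely, one first shrinks $N$ so that $\overline{N} = \prod_{v\notin S} N_v$ with $N_v$ open normal in $G(\mathcal{O}_v)$ and $N_v = G(\mathcal{O}_v)$ outside a finite set $V'$; then, writing $T_0 = Z_G(z)^\circ$, one uses the openness of the conjugation map $\varphi_{v,T_0}$ on $N_v \times T_0^{\mathrm{reg}}(K_v)$ together with density of $N$ in $\overline{N}$ to produce $y\in N$ with $xy = g b g^{-1}$ componentwise at $v\in V\cup V'$, where $g_v\in N_v$ and $b_v\in T_0^{\mathrm{reg}}(K_v)$. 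Setting $t = xy$ and $T = Z_G(t)^\circ$, the adele $a$ with $a_v = g_v z^n g_v^{-1}$ at these places (and $a_v=1$ elsewhere) lies in $T(\mathbb{A}(S))^n$ because $T = gT_0g^{-1}$ locally. Now (\ref{E:Incl1}) for $t$ supplies $s\in Z_H(t)$ with $\theta(s) = a$, and any $c\in\Gamma\cap s\widehat{N}$ satisfies $[x,c]\in[t,c]\widehat{N} = [t,s]\widehat{N} = \widehat{N}$ (using $x = ty^{-1}$, $y\in N$, and $\widehat{N}$ normal), hence $c\in Z(N,x)$; while $c \in \theta(s)\overline{N} = a\overline{N} = z^n\overline{N}$ since $a_v z^{-n} = [g_v, z^n]\in N_v$. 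Thus $z^n \in Z(N,x)(\overline{N}\cap\Gamma)$. The construction of the perturbation $t = xy$ inside $U$ with torus compatibly related to $T_0$, so that $s\in Z_H(t)$ simultaneously nearly-commutes with $x$ and maps close to $z^n$, is the missing idea in your proposal.
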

(Note that $Z(N , x)$ is simply the pullback of the centralizer of $xN$ in $\Gamma/N$ under the canonical homomorphism $\Gamma \to \Gamma/N$.)
\begin{proof} For proving (\ref{E:Incl2}), we may replace $N$ with a smaller normal subgroup of $\Gamma$ of finite index to assume that $\overline{N} =
\prod_{v \notin S} N_v$, where $N_v \subset G(\mathcal{O}_v)$ is an open normal subgroup for all $v \notin S$, and $N_v = G(\mathcal{O}_v)$ for all
$v \in V^K \setminus (S \cup V')$ for a suitable finite \emph{nonempty} subset $V' \subset V^K \setminus S$.

We need to show that for any $z \in \Gamma \cap U$, we have
\begin{equation}\label{E:Incl7}
z^n \in Z(N , x)(\overline{N} \cap \Gamma).
\end{equation}
If $V \neq \varnothing$, then $z$ is automatically regular semi-simple. If $V = \varnothing$, and hence $\Gamma \cap U = \Gamma$, using the Zariski-density
of $N$ in $G$ in conjunction with the fact that the set of regular semi-simple elements is Zariski-open in $G$, we see that the coset $zN$ contains a regular
semi-simple element $z' \in \Gamma \cap U$. This means that in proving (\ref{E:Incl7}), we may assume $z$ to be regular semi-simple.
Let $T_0 = Z_G(z)^{\circ}$ be the maximal $K$-torus of $G$ containing $z$.
For $v \in V'' := V \cup V'$ consider the open set $W_v := \varphi_{v , T_0}(N_v \times T^{\mathrm{reg}}_0(K_v))$ of $G(K_v)$ (see (\ref{E:phi})), and then set
$$
\mathcal{W} = \prod_{v \in V''} W_v \ \ \ \text{and} \ \ \ \mathcal{N} = \prod_{v \in V''} N_v.
$$
Since $N_v$ is an open subgroup, the intersection $N_v \cap T_0(K_v)$ meets $T^{\mathrm{reg}}_0(K_v)$ for every $v \in V''$, which implies that $\mathcal{W} \cap \mathcal{N}$
is a \emph{nonempty} open subset of $\mathcal{N}$, and hence $\mathcal{N} \subset \mathcal{W}N$. Since $x \in \overline{N} \cap \Gamma$, there exist $y \in N$ such that
\begin{equation}\label{E:Incl15}
xy = gbg^{-1}
\end{equation}
for some $g = (g_v)$ with $g_v \in N_v$ and $b = (b_v)$ with $b_v \in T^{\mathrm{reg}}_0(K_v)$, for $v \in V''$. As $V'' \neq \varnothing$, the element $t := xy$ is automatically
regular semi-simple. Let $T = Z_G(t)^{\circ}$ be the maximal $K$-torus of $G$ containing $t$.

For $v \notin S$, define
$$
a_v = \left\{\begin{array}{ccl} g_vz^ng_v^{-1} & \text{if} & v \in V'', \\ 1 & \text{if} & v \notin S \cup V''. \end{array}       \right.
$$
It follows from (\ref{E:Incl15}) that the adele $a = (a_v)$ belongs to $T(\mathbb{A}(S))^n$. So, by (\ref{E:Incl1}) there exists $s \in Z_H(t)$ such that
$\theta(s) = a$. In fact, $a \in \overline{\Gamma}$, so $s \in \widehat{\Gamma}$, and $\Gamma\cap s{\widehat{N}}$ is nonempty; we pick $c \in \Gamma \cap s\widehat{N}$. Then
$$
[x , c] \in [t , c]\widehat{N} = [t , s]\widehat{N} = \widehat{N}
$$
implying that $c \in Z(N , x)$ (note that, being of finite index in $\Gamma$, the normal subgroup $N$ is open (and hence closed) in the profinite topology on the former, and hence $\Gamma\cap {\widehat{N}} = N$). On the other hand,
$$
c \in \theta(s)\overline{N} = a\overline{N} = z^n\overline{N}
$$
as $az^{-n} \in \overline{N}$ because $a_vz^{-n} = [g_v , z^n] \in N_v$ since $g_v \in N_v$, for $v \in V''$, and $a_vz^{-n} = z^{-n} \in G(\mathcal{O}_v) = N_v$,
for $v \in V^K \setminus (S \cup V'')$. \end{proof}

\medskip

{\it Proof of $(ii)$ in Theorem \ref{T:Criterion1}.} First, we will derive from Proposition \ref{P:Central1} that for any $x \in F$ we have the inclusion
\begin{equation}\label{E:Incl3}
\theta(Z_H(x)) \supset (\overline{\Gamma \cap U})^n.
\end{equation}
Consider the profinite group $\Delta = \theta^{-1}(\overline{\Gamma})$, which is  a quotient of $\widehat{\Gamma}$, and take any $\gamma \in \Gamma \cap U$ (we will canonically
identify $\Gamma$ with a dense subgroup of $\Delta$ using the splitting of $\theta$ over $G(K)$).
Let $\mathscr{R}$ be the family of all open normal subgroups of $\Delta$. For $R \in \mathscr{R}$,  set $$N_R := \Gamma \cap R \ \ \  \text{and} \ \ \ \widetilde{R} := \theta^{-1}(\overline{N_R}),$$
and pick $x_R \in \Gamma \cap (xR)$. Applying Proposition \ref{P:Central1} to $N_R$ and $x_R$, we obtain that
\begin{equation}\label{E:Incl4}
\gamma^n\widetilde{R} \bigcap \widetilde{Z}(R , x) \neq \varnothing \ \ \ \text{for any} \ \ R \in \mathscr{R},
\end{equation}
where $\widetilde{Z}(R , x) := \{ \delta \in \Delta \vert [x , \delta] \in R \} = \{ \delta \in \Delta \vert [x_R , \delta] \in R \}$. Using the compactness of $F$,
one easily derives from this that
\begin{equation}\label{E:Incl5}
\gamma^n F \bigcap Z_{\Delta}(x) \neq \varnothing.
\end{equation}
Indeed, one observes that
$$
\bigcap_{R \in \mathscr{R}} \widetilde{R} = F, \ \ \ \bigcap_{R \in \mathscr{R}} \widetilde{Z}(R , x) = Z_{\Delta}(x),
$$
and for any $R_1, \ldots , R_d \in \mathscr{R}$, we have $$\widetilde{Z}(R_1 , x) \cap \cdots \cap \widetilde{Z}(R_d , x) = \widetilde{Z}(R_1 \cap \cdots \cap R_d , x).$$ So, if (\ref{E:Incl5}) does not hold,
there exists $R' \in \mathscr{R}$ such $\gamma^n F \cap Z(R' , x) = \varnothing$. Next, using the compactness of $Z(R' , x)$, we see that there exists $R''  \in \mathscr{R}$
such that $\gamma^n R'' \cap Z(R' , x) = \varnothing$. Then for $R = R' \cap R''$, the inclusion (\ref{E:Incl4}) fails to hold, a contradiction.

We have proved that $(\Gamma\cap U)^n \subset \theta(Z_{\Delta}(x))$. Since $\theta(Z_{\Delta}(x))$ is closed, passing to the closure, we obtain (\ref{E:Incl3}).
%
%
%
Furthermore, we have  ${\overline{\Gamma\cap U}} = \prod_{v \notin S} {\overline{\Omega_v}}$, where $\Omega_v = G(\mathcal{O}_v) \cap \mathscr{U}(v,T(v))$ for $v \in V$ and $\Omega_v =
G(\mathcal{O}_v)$ for $v \in V^K \setminus (S \cup V)$. In all cases, $\Omega_v$ is a \emph{nonempty} open subset of $G(K_v)$, hence Zariski-dense. It follows that
$(\Omega_v)^n$ is always infinite. Now,  we conclude from (\ref{E:Incl3}) and
Proposition \ref{P:v-in-Z1} that $\mathfrak{Z}(F)$ equals $V^K \setminus (S \cup \mathcal{A})$. Then the extension (\ref{E:F}) is central by Proposition~\ref{P:Centr1}. \hfill $\Box$

\section{First applications and proof of Theorem A}\label{S:Appl}

To verify the inclusion (\ref{E:Incl1}) in Theorem \ref{T:Criterion1}, we observe that for $t \in T(K)$, the centralizer
$Z_H(t)$ contains $T(K)$, hence the closure $\widehat{T(K)}$, and therefore $\theta(Z_H(t))$ contains $\theta(\widehat{T(K)}) = \overline{T(K)}$.
So, we could immediately derive the centrality of (\ref{E:F}) using Theorem \ref{T:Criterion1} if we knew that there exists
an integer $n > 0$ such that for any maximal $K$-torus $T$ of $G$ (or at least for any maximal $K$-torus with specified local behavior at finitely many
places), the quotient $T(\mathbb{A}(S))/\overline{T(K)}$ has exponent dividing $n$ (``almost strong approximation property'' up to exponent $n$). Unfortunately, when
$S$ is finite the latter quotient has infinite exponent (cf.\,\cite[Proposition 4]{PR-Irred}), which forces us to use some additional considerations
(cf. Proposition \ref{P:Com-lifts2} and Examples 4.6 and 4.7 below). In the next section, we will establish almost strong approximation property in the case where $S$ contains all but finitely many elements of a generalized arithmetic progression  (see
Theorem \ref{T:SA-tori}), which will lead to Theorem B of the introduction. In this section we will consider separately a basic case where $V := V^K \setminus S$ is finite (i.e., $S$
is cofinite) as this case  has some interesting consequences (like Theorem A of the introduction). Since in this case the corresponding ring of $S$-integers $\mathcal{O}(S)$
is the intersection of finitely many discrete valuation subrings of $K$ corresponding to the places in $V$, hence is semi-local, we will refer to this case as \emph{semi-local}.

\medskip

We begin with the following proposition which was already implicitly established in \cite[\S 9]{PR1}.
\begin{prop}\label{P:WA-tori} {\rm (Almost weak approximation)}
For every $d \geqslant 1$, there exists an integer $n = n(d) \geqslant 1$ such that given a $K$-torus $T$ of dimension $\leqslant d$, for any finite
set of places $V \subset V^K$, the quotient $T_V / \overline{T(K)}$, where $T_V = \prod_{v \in V} T(K)$ and $\overline{T(K)}$ denote the closure of
$T(K)$ in $T_V$, has exponent dividing $n$.
\end{prop}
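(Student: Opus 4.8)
The plan is to reduce the statement to a uniform bound on the size of a certain Galois cohomology group, exploiting the fact that a $K$-torus of dimension $\leqslant d$ is split by a Galois extension whose degree is bounded in terms of $d$ alone. First I would recall that every $d$-dimensional $K$-torus $T$ is defined by a continuous action of $\Ga(\overline{K}/K)$ on the character lattice $\Z^d$, hence is split by the fixed field $E$ of the kernel of the map $\Ga(\overline{K}/K) \to \GL_d(\Z)$; since any finite subgroup of $\GL_d(\Z)$ has order dividing some explicit $M = M(d)$ (Minkowski's bound), we have $[E:K] \leqslant M(d)$. Thus it suffices to prove the statement with a bound depending on $d$ and on the (bounded) degree $[E:K]$.

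The next step is to express $T_V/\overline{T(K)}$ cohomologically. For a torus $T$ split by $E/K$ with $\cG = \Ga(E/K)$, the group $T(K)$ is the $\cG$-invariants of $T(E) \cong (E^\times)^d$ acted on through the integral representation; similarly, for $V$ a finite set of places, $T_V = \prod_{v \in V} T(K_v)$, and after replacing $V$ by a larger finite set one can arrange that $T(K_v) = \prod_{w \mid v} (T(E_w))^{\cG_w}$ with $\cG_w$ the decomposition group. The obstruction to approximating an element of $\prod_v T(K_v)$ by an element of $T(K)$ — after accounting for the image of the corresponding product of $E^\times$'s, where weak approximation for $\mathbb{G}_m$ is exact — lands in a subquotient of $H^1(\cG, \widehat{T})$-type groups, or more directly in $\prod_{v \in V} H^1(\cG_w, \widehat{T}) $ modulo the image of $H^1(\cG, \widehat{T})$; the point is that these are all finite groups killed by $[E:K]$, which is bounded by $M(d)$. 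Concretely I would follow the Voskresenskii/Platonov--Rapinchuk machinery: weak approximation defects for tori are controlled by $\Sha^1_\omega$ and by local $H^1$'s, all of which are annihilated by the exponent of $\cG$, hence by $M(d)!$ or a similarly explicit function of $d$.

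The cleanest route, and the one I expect the paper to take, is to use the exact sequence coming from a flasque (or coflasque) resolution $1 \to T \to Q \to R \to 1$ with $Q$ a quasi-trivial (induced) torus and $R$ flasque, all of bounded dimension and split by $E$. Then $Q$ satisfies weak approximation exactly (it is an open subvariety argument / rational variety), and $R(K_v)/R(\text{something})$ as well as $\Sha(R)$ are annihilated by $[E:K]$; chasing the long exact sequence in flat/Galois cohomology then shows that the exponent of $T_V/\overline{T(K)}$ divides a number depending only on $d$ via $M(d)$. The main obstacle is bookkeeping: one must make the reduction to a single Galois splitting field genuinely uniform (so that $n$ does not secretly depend on $T$ or $V$), and one must be careful that enlarging $V$ only enlarges the quotient by a controlled amount — equivalently, that adding a place $v$ contributes a factor killed by $[E_w:K_v] \mid [E:K]$. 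Once the cohomological annihilation is in place, the fact that $H^1$ of a finite group with finitely generated module coefficients is finite and killed by the group order does the rest, and $n(d)$ can be taken to be, e.g., the exponent of $\GL_d(\Z/3)$ or any explicit multiple of $M(d)$.
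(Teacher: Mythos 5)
Your overall strategy matches the paper's proof: Minkowski's bound on finite subgroups of $\GL_d(\Z)$ gives a uniform bound $n(d)$ on $[E:K]$ for the splitting field $E$; you cover $T$ by a quasi-trivial torus enjoying exact weak approximation; and the defect embeds into a product of local $H^1$'s, each killed by the order of the decomposition group, hence by $n(d)$. That is exactly the paper's reasoning. However, in your ``cleanest route'' paragraph the resolution is written in the wrong direction. The paper starts from a surjection $\phi\colon \Z[\mathcal{G}]^{\ell}\twoheadrightarrow X_*(T)$ of \emph{cocharacter} modules, giving the exact sequence $1\to T''\to T'\stackrel{\eta}{\to} T\to 1$ with $T'=\mathrm{R}_{E/K}(\mathrm{GL}_1)^{\ell}$ quasi-trivial mapping \emph{onto} $T$; then $\overline{T(K)}\supset\eta_V(T'_V)$ by weak approximation for $T'$, and $T_V/\eta_V(T'_V)$ embeds into $\prod_{v\in V}H^1(K_v,T'')=\prod_v H^1(\mathcal{G}_w,T''(E_w))$, which is annihilated by $|\mathcal{G}_w|\mid n(d)$. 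Your sequence $1\to T\to Q\to R\to 1$ with $Q$ quasi-trivial embeds $T$ as a \emph{subtorus} of $Q$; this does not directly bound the weak-approximation defect of $T$, because density of $Q(K)$ in $Q_V$ produces a $q\in Q(K)$ near a target $(t_v)\in T_V$, but there is no mechanism for correcting $q$ into $T(K)$ while staying close (the image of $q$ in $R(K)$ is uncontrolled). Also, the paper does not actually need $T''$ to be flasque or $R$ to be (co)flasque -- any surjection from an induced torus suffices, since the only input is that $H^1(K_v,T'')$ is killed by $|\mathcal{G}_w|$. Your first paragraph, which describes covering $T$ by products of $E^\times$'s, has the right picture and should be the one you carry out; the ``cleanest route'' paragraph as written would not.
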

\begin{proof}
Pick $n = n(d)$ so that it is divisible by the order of any finite subgroup of the group $\mathrm{GL}_d(\Z)$ (it follows from Minkowski's lemma that one can take
$n$ to be the index in $\mathrm{GL}_d(\Z)$ of the principal congruence subgroup modulo $3$). Let $T$ be an arbitrary $K$-torus of dimension $m \leqslant d$. We let $E := K_T$
denote the minimal splitting field of $T$ over $K$, and set $\mathcal{G} = \mathrm{Gal}(E/K)$. The natural action of $\mathcal{G}$ on the character group $X(T)$ defines its faithful representation in $\mathrm{GL}_m(\Z)$, so the order $\vert \mathcal{G} \vert$ divides $n(d)$. Then, for the dual module of co-characters $X_*(T)$, we find a surjective
homomorphism $\phi \colon \Z[\mathcal{G}]^{\ell} \to X_*(T)$, and let $M = \mathrm{Ker}\: \phi$. Let $T'$ and $T''$ be the $K$-tori that split over $E$ and have $\Z[\mathcal{G}]^{\ell}$ and $M$ as their co-character modules; clearly, $T' = \mathrm{R}_{E/K}(\mathrm{GL}_1)^{\ell}$, hence it is quasi-split.  We have the following exact sequence of $K$-tori: $$
1 \to T'' \longrightarrow T' \stackrel{\eta}{\longrightarrow} T \to 1.
$$
This sequence gives rise to the following commutative diagram with exact bottom row:
$$
\begin{array}{ccccc}
T'(K) & \stackrel{\eta_K}{\longrightarrow} & T(K) & &   \\
\downarrow & & \downarrow & &  \\
T'_V & \stackrel{\eta_V}{\longrightarrow} & T_V & \longrightarrow & \prod_{v \in V} H^1(K_v , T'')
\end{array}.
$$
Being quasi-split, hence rational over $K$, the torus $T'$ has weak approximation property with respect to any finite set of places (cf.\,\cite[Proposition 7.3]{PlRa}), i.e. $\overline{T'(K)} =
T'_V$. It follows that $\overline{T(K)}$ contains $\eta_V(T'_V)$. On the other hand, the quotient $T_V /\eta_V(T'_V)$ embeds into $\prod_{v \in V} H^1(K_v , T'')$. But for $v \in V$, as a consequence of Hilbert's Theorem 90 for tori we have $H^1(K_v , T'') = H^1(\mathcal{G}_w , T''(L_w))$ where $\mathcal{G}_w$ is the decomposition group $\mathrm{Gal}(E_w/K_v)$ for some extension $w \vert v$. By our construction, the order $\vert \mathcal{G}_w \vert$ divides $n$. Therefore, the quotient $T_V / \eta_V(T'_V)$ has exponent dividing $n$, and our claim follows.  (We note that the proof enables us to somewhat optimize our choice of $n$: all we need is that $n$ be divisible by the order of any finite \emph{solvable} subgroup
of $\mathrm{GL}_d(\Z)$.)
\end{proof}

\vskip2mm

\begin{cor}\label{C:WA-tori}
Let $G$ be a reductive $K$-group. There exists $n \geqslant 1$ such that for any maximal $K$-torus $T$ of $G$ and any finite set of places $V \subset V^K$,
the quotient $T_V/\overline{T(K)}$ has exponent dividing $n$.
\end{cor}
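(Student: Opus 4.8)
The goal is to prove Corollary \ref{C:WA-tori}: for a reductive $K$-group $G$, there is an $n \geqslant 1$ such that for every maximal $K$-torus $T$ of $G$ and every finite set of places $V$, the quotient $T_V/\overline{T(K)}$ has exponent dividing $n$.

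The plan is to deduce this directly from Proposition \ref{P:WA-tori} by bounding, uniformly in $T$, the dimension of the maximal $K$-tori of $G$. First I would observe that every maximal $K$-torus $T$ of $G$ has dimension equal to the absolute rank of $G$ — indeed, the rank is an invariant of $G$ over $\overline{K}$, and all maximal tori of $G_{\overline{K}}$ are conjugate, so $\dim T = \operatorname{rk} G =: d$ is a fixed integer depending only on $G$, not on the particular torus $T$. (If one prefers to avoid even this much, one can simply take $d = \dim G$, since any subtorus certainly has dimension at most $\dim G$.)

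With this uniform bound in hand, the corollary is immediate: apply Proposition \ref{P:WA-tori} with this value of $d$ to obtain the integer $n = n(d) \geqslant 1$. Then for an arbitrary maximal $K$-torus $T$ of $G$, since $\dim T \leqslant d$, the proposition guarantees that for any finite set of places $V \subset V^K$ the quotient $T_V/\overline{T(K)}$ has exponent dividing $n$. This is exactly the assertion of the corollary, with the same $n$ working for all maximal tori simultaneously.

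There is essentially no obstacle here — the content is entirely in Proposition \ref{P:WA-tori}, and the corollary is just the remark that the dimensions of maximal tori of a fixed reductive group are uniformly bounded (in fact constant). The only point worth stating carefully is that the bound on $\dim T$ is independent of $T$, which is what allows a single $n$ to serve for all maximal $K$-tori at once; this is the whole reason the statement is phrased as a corollary rather than being subsumed into the proposition.

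\begin{proof}
Every maximal $K$-torus $T$ of $G$ has dimension equal to the absolute rank $d := \operatorname{rk} G$ of $G$, which is a fixed integer depending only on $G$: indeed, over $\overline{K}$ all maximal tori of $G_{\overline{K}}$ are conjugate and hence of the same dimension, so $\dim T = d$ for every maximal $K$-torus $T$. (Alternatively, one may simply use $\dim T \leqslant \dim G$.) Let $n = n(d) \geqslant 1$ be the integer provided by Proposition \ref{P:WA-tori} for this value of $d$. If $T$ is any maximal $K$-torus of $G$, then $\dim T \leqslant d$, so Proposition \ref{P:WA-tori} applies to $T$: for every finite set of places $V \subset V^K$, the quotient $T_V/\overline{T(K)}$ has exponent dividing $n$. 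Since the same $n$ works for all maximal $K$-tori, this proves the corollary.
\end{proof}
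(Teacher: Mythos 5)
Your proof is correct and is precisely the (unstated) argument the paper intends: the corollary follows from Proposition \ref{P:WA-tori} once one observes that every maximal $K$-torus of $G$ has dimension equal to the absolute rank of $G$, a constant, so a single $n = n(d)$ serves uniformly. The paper does not spell this out, but your argument matches the intended route exactly.
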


\vskip2mm

Now, let $G$ be an absolutely almost simple simply connected algebraic group over a global field $K$, and let $V$ be a finite set of nonarchimedean places of $K$ containing 
the set $\mathcal{A}$ of anisotropic places. Set $S = V^K \setminus V$. Then for any maximal $K$-torus $T$ of $G$ the group $T(\mathbb{A}(S))$ can be identified with the group 
$T_V$ in the above notations. Thus, Corollary \ref{C:WA-tori} asserts the existence of $n \geqslant 1$ (independent of $T$) such that the closure $\overline{T(K)}$ of $T(K)$ in 
$T(\mathbb{A}(S))$ contains $T(\mathbb{A}(S))^n$, for \emph{any} maximal $K$-torus $T$ of $G$. Let us use this fact to analyze  the congruence sequence (\ref{E:C}) appearing in \S 2. 
As we observed at the beginning of this section, for a maximal $K$-torus $T$ of $G$ and any $t \in T(K)$, the image $\pi(Z_{\widehat{G}}(t))$ of the corresponding centralizer 
contains $\overline{T(K)}$, hence $T(\mathbb{A}(S))^n$. This enables us to use Theorem \ref{T:Criterion1} to conclude that the congruence kernel  $C^{(S)}(G)$ is central.  Furthermore, it follows from our computations of the metaplectic kernel \cite{PR1} that in the situation at hand $M(S , G)$ is trivial, so being central $C^{(S)}(G)$ is actually trivial (provided that (MP) holds for $G(K)$, which we assume). Thus, we obtain the following:

\begin{thm}\label{T:SL}
Let $G$ be an absolutely almost simple simply connected algebraic group over a global field $K$, and assume that $({\mathrm{MP}})$ holds for $G(K)$. Then for
any finite set $V$ of nonarchimedean places of $K$ that contains the set $\mathcal{A}$ of anisotropic places and $S = V^K \setminus V$, the congruence kernel $C^{(S)}(G)$ is central and hence trivial.
\end{thm}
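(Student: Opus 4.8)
\medskip

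\noindent\emph{Proof sketch.} The plan is to feed the ``almost weak approximation'' property for maximal tori (Corollary~\ref{C:WA-tori}) into the converse direction of the centrality criterion, Theorem~\ref{T:Criterion1}$(ii)$, applied to the congruence sequence (\ref{E:C}) itself, and then to upgrade ``central'' to ``trivial'' by invoking the vanishing of the metaplectic kernel in the semi-local case.

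First I would fix, using Corollary~\ref{C:WA-tori}, an integer $n \geqslant 1$ — enlarged if necessary so that $n > 1$ — with the property that for every maximal $K$-torus $T$ of $G$ and every finite set of places $V' \subset V^K$ the quotient $T_{V'}/\overline{T(K)}$ has exponent dividing $n$. Since here $V = V^K \setminus S$ is finite and consists of nonarchimedean places, for each maximal $K$-torus $T$ of $G$ there is a canonical identification $T(\mathbb{A}(S)) = T_V = \prod_{v \in V} T(K_v)$, so that the closure $\overline{T(K)}$ of $T(K)$ in $T(\mathbb{A}(S))$ contains $T(\mathbb{A}(S))^n$. Next I would verify the hypothesis of Theorem~\ref{T:Criterion1}$(ii)$ for (\ref{E:C}) — i.e.\ for (\ref{E:F}) with $D = \{1\}$, so $\theta = \pi$ and $F = C$ — taking $V = \varnothing$, in which case the condition must be checked for every regular semi-simple $t \in G(K)$. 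For such a $t$, set $T = Z_G(t)^{\circ}$, the maximal $K$-torus of $G$ containing $t$ (connected because $G$ is simply connected). The splitting of (\ref{E:C}) over $G(K)$ realizes $T(K)$ as an abelian subgroup of $\widehat{G}$ contained in $Z_{\widehat{G}}(t)$; as $Z_{\widehat{G}}(t)$ is closed it contains the closure $\widehat{T(K)}$ of $T(K)$ in $\widehat{G}$, and applying $\pi$ (a closed map, since $C$ is compact) gives
\[
\pi(Z_{\widehat{G}}(t)) \supseteq \pi(\widehat{T(K)}) = \overline{T(K)} \supseteq T(\mathbb{A}(S))^n,
\]
which is precisely the inclusion (\ref{E:Incl1}) with $T = Z_G(t)^{\circ}$. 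Theorem~\ref{T:Criterion1}$(ii)$ then yields that (\ref{E:C}) is a central extension, i.e.\ $C^{(S)}(G)$ is central in $\widehat{G}^{(S)}$.

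Finally, with centrality established, the finiteness of the metaplectic kernel $M(S,G)$ \cite[Theorem~2.7]{PR1} forces $C^{(S)}(G)$ to be finite and isomorphic to the Pontrjagin dual of $M(S,G)$ (cf.\,\cite[\S3]{PR-Milnor}); and in the present semi-local situation the computations of \cite{PR1} give $M(S,G) = 1$, so $C^{(S)}(G)$ is trivial. The only substantive input in this chain is Corollary~\ref{C:WA-tori} (equivalently Proposition~\ref{P:WA-tori}), whose proof reduces to Minkowski's bound on the orders of finite subgroups of $\mathrm{GL}_d(\Z)$ together with weak approximation for an auxiliary quasi-split torus; the rest is formal, the one point deserving care being the use of the criterion with $V = \varnothing$, where one passes to a regular semi-simple representative in each coset (as in the proof of Proposition~\ref{P:Central1}) using Zariski-density of finite-index subgroups of $\Gamma$ in $G$.
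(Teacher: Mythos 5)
Your proposal is correct and reproduces the paper's own argument essentially verbatim: the paper likewise identifies $T(\mathbb{A}(S))$ with $T_V$, feeds Corollary~\ref{C:WA-tori} into the inclusion $\pi(Z_{\widehat{G}}(t)) \supseteq \overline{T(K)} \supseteq T(\mathbb{A}(S))^n$, applies Theorem~\ref{T:Criterion1}(ii), and then uses the vanishing of $M(S,G)$ to upgrade centrality to triviality. Your attention to the minor requirement $n>1$ in Theorem~\ref{T:Criterion1}(ii) and to the $V=\varnothing$ case (passing to a regular semi-simple coset representative, as in the proof of Proposition~\ref{P:Central1}) matches what the paper does implicitly.
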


\vskip2mm

\begin{remark}
Sury \cite{Sury} showed that for the absolutely almost simple simply connected anisotropic groups of type $\textsf{A}_1$ as well as simply connected groups of classical types
associated with bilinear and certain hermitian/skew-hermitian forms, the methods used to prove (MP) (see \cite[Chapter 9]{PlRa}) can be
adapted to prove Theorem \ref{T:SL}. This does not appear to be the case for the anisotropic inner forms of type $\textsf{A}_n$ with $n > 1$, i.e. for
the groups of the form $G = \mathrm{SL}_{1 , D}$, where $D$ is a central division algebra over $K$ of degree $d > 2$.  Indeed, in this case the proof of (MP)
is derived from the following result which is valid over \emph{any} field: {\it Let $D$ be a finite-dimensional division algebra over a field $K$. Then $D^{\times}$
cannot have a nonabelian finite simple group as a quotient} (see \cite{Segev}, and also \cite{RS}). {\it In fact, every finite quotient of $D^{\times}$ is solvable} \cite{RSS}.
(See also \cite{R-MP} for another proof of (MP) along these lines.) All these results rely on the following fact: {\it For a finite index subgroup $N$ of $D^{\times}$, we have
$D = N - N$} (\cite{Ber-Shap},\cite{Tur}). However, there is no valid analog of this fact for finite-index subgroup of $\mathcal{D}^{\times}$, where $\mathcal{D}$ is an order in $D$ over a semi-local subring $\mathcal{O}$ of $K$ that has finite homomorphic images (see \cite{Ber-Shap} regarding the case where $\mathcal{D}$ has no such images).
\end{remark}

\vskip2mm

Combining Theorem \ref{T:SL} with Proposition \ref{P:Com-lifts1}, we obtain the following.
\begin{prop}\label{P:Com-lifts2}
Assume that $\mathcal{A} \cap S = \varnothing$ and  there is a partition
$V^K \setminus (S \cup \mathcal{A}) = \bigcup_{i \in I} V_i$, with all $V_i$'s \emph{finite}, such that one can find subgroups $H_{\mathcal{A}}$ and
$H_i$ $(i \in I)$ of $H$ satisfying the following conditions:

\vskip2mm

%
%

\noindent \ \ $(i)$ \parbox[t]{15.5cm}{$\theta(H_{\mathcal{A}})$ is a dense subgroup of $G_{\mathcal{A}} = \prod_{v \in \mathcal{A}} G(K_v)$
and $\theta(H_i)$ is a dense subgroup of $G(\A(V^K\setminus V_i))$ for all $i \in I$;}

\vskip1mm

\noindent \ $(ii)$ \parbox[t]{15.5cm}{any two of the subgroups $H_{\mathcal{A}}$ and $H_i$ for $i \in I$ commute elementwise;}

\vskip1mm

\noindent $(iii)$ \parbox[t]{15.5cm}{the subgroups $H_{\mathcal{A}}$ and $H_i$ for $i \in I$ generate a dense subgroup of $H$.}

\medskip

\noindent Then {\rm (\ref{E:F})} is a central extension.
\end{prop}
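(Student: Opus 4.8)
The plan is to obtain this as an immediate consequence of Theorem \ref{T:SL} combined with Proposition \ref{P:Com-lifts1}. Conditions $(i)$, $(ii)$, $(iii)$ of the present statement are verbatim conditions $(ii)$, $(iii)$, $(iv)$ of Proposition \ref{P:Com-lifts1}, so the only hypothesis of the latter that still needs to be checked is its condition $(i)$: that for each $i \in I$, with $V'_i := \bigcup_{j \neq i} V_j$, the congruence kernel $C^{(S \cup V'_i)}(G)$ is trivial.

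First I would record the elementary set-theoretic identity behind the argument. Since $\mathcal{A} \cap S = \varnothing$ and the $V_i$ partition $V^K \setminus (S \cup \mathcal{A})$, the sets $S$, $\mathcal{A}$ and the $V_j$ $(j \in I)$ are pairwise disjoint with union $V^K$. Hence for a fixed $i$ we have $V'_i = (V^K \setminus (S \cup \mathcal{A})) \setminus V_i$, so that $S \cup V'_i = V^K \setminus (\mathcal{A} \cup V_i)$. By hypothesis each $V_i$ is finite, and $\mathcal{A}$ is finite as well, so $W_i := \mathcal{A} \cup V_i$ is a finite set of nonarchimedean places of $K$ containing $\mathcal{A}$, with $S \cup V'_i = V^K \setminus W_i$.

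Next I would apply Theorem \ref{T:SL}, which is available because $(\mathrm{MP})$ for $G(K)$ is a standing assumption: taking the finite set of nonarchimedean places there to be $W_i \supseteq \mathcal{A}$, it yields that $C^{(V^K \setminus W_i)}(G)$ is central, hence trivial, i.e.\ $C^{(S \cup V'_i)}(G)$ is trivial. Since $i \in I$ was arbitrary, condition $(i)$ of Proposition \ref{P:Com-lifts1} holds, and applying that proposition gives that $(\ref{E:F})$ is a central extension.

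I do not expect a genuine obstacle here: all the substance is already contained in Theorem \ref{T:SL} and Proposition \ref{P:Com-lifts1}, and the present statement is essentially a repackaging of the two. The only point deserving attention is the identification $S \cup V'_i = V^K \setminus (\mathcal{A} \cup V_i)$ together with the observation that the newly imposed finiteness of the $V_i$ is exactly what guarantees that $\mathcal{A} \cup V_i$ is finite --- the hypothesis under which Theorem \ref{T:SL} is stated.
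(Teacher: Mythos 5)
Your proof is correct and follows the paper's approach exactly: the paper derives this proposition by ``combining Theorem \ref{T:SL} with Proposition \ref{P:Com-lifts1},'' and your write-up simply makes explicit the set-theoretic identity $S \cup V'_i = V^K \setminus (\mathcal{A} \cup V_i)$ and the finiteness of $\mathcal{A} \cup V_i$ that justify invoking Theorem \ref{T:SL} to verify condition $(i)$ of Proposition \ref{P:Com-lifts1}.
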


\medskip

\noindent {\it Proof of Theorem A}. We apply Proposition \ref{P:Com-lifts2} to $H = \widehat{G}^{(S)}$ and $F = C^{(S)}(G)$  by considering the partition of $V^K \setminus (S \cup \mathcal{A})$ into one-element subsets (singletons). We let $H_{\mathcal{A}}$ be the subgroup generated by $\mathcal{G}_v$ (notations as in the statement of Theorem A) for $v\in \mathcal{A}$, and set $H_v = \mathcal{G}_v$ for $v \in V^K \setminus (S \cup \mathcal{A})$. Then the assumptions of Theorem A immediately show that the conditions of Proposition \ref{P:Com-lifts2} are satisfied and the centrality of $C^{(S)}(G)$ follows. \hfill $\Box$

\medskip

We will now show how Theorem A can be used to establish the centrality of $C^{(S)}(G)$ in some known cases.

\medskip

\addtocounter{thm}{2}

\noindent {\bf Example 4.6.}  Let $G = \mathrm{SL}_{n}$ with $n \geqslant 3$, and $S \subset V^K$ be an arbitrary subset containing $V^K_{\infty}$.
The first proof of centrality in this case was given by Bass, Milnor and Serre in \cite{BMS}.
In order to apply Theorem A and give an alternative argument, for $1 \leqslant i , j \leqslant
n$, $i \neq j$, we consider the corresponding 1-dimensional unipotent subgroup $U_{ij}$ of $G$ together with its canonical parametrization
$e_{ij} \colon \mathbb{G}_a \to U_{ij}$. The following commutation relation
for elementary matrices is well-known:
\begin{equation}\label{E:Elem}
[e_{ij}(s) , e_{lm}(t)] = \left\{ \begin{array}{ll} 1, & i \neq m, j
\neq l \\ e_{im}(st), & j = l, i \neq m \\ e_{lj}(-st), & j \neq l,
i = m \end{array} \right.
\end{equation}
It is easy to see that the topologies $\tau_a$ and $\tau_c$ of
$G(K)$ induce the same topology on each $U_{ij}(K)$ (cf.\:Theorem
7.5(e) in \cite{BMS}). So, if $\widehat{U}_{ij}$ and
$\overline{U}_{ij}$ denote the closures of $U_{ij}(K)$ in
$\widehat{G}$ and $\overline{G},$ respectively, then $\widehat{G}
\stackrel{\pi}{\longrightarrow} \overline{G}$ restricts to an
isomorphism $\widehat{U}_{ij} \stackrel{\pi_{ij}}{\longrightarrow}
\overline{U}_{ij}.$ By the strong approximation property for the additive group $\mathbb{G}_a$,
the isomorphism $(e_{ij})_K \colon K^+ \to U_{ij}(K)$ extends to an isomorphism $\overline{e}_{ij} \colon \mathbb{A}(S) \to
\overline{U}_{ij}$. Then
$\widehat{e}_{ij} := \pi_{ij}^{-1} \circ \overline{e}_{ij}$ is an
isomorphism  $\mathbb{A}(S) \to\widehat{U}_{ij}$. We will
let $\cG_v$, for $v \notin S$, denote the subgroup of $\widehat{G}$
generated by $\widehat{e}_{ij}(t)$ for all $t\in K_v \subset
\mathbb{A}_S$ and all $i \neq j.$ Clearly, the $\cG_v$'s satisfy
condition $(i)$ of Theorem A. Since $K_v$ for $v \in V^K \setminus S$ additively
generate a dense subgroup of $\mathbb{A}(S)$, the closed subgroup of $\widehat{G}$
generated by the $\cG_v$, $v\notin S$, contains
$\widehat{e}_{ij}(\mathbb{A}(S))$ for all $i \neq j$. In particular, it contains
$\widehat{e}_{ij}(K)$ for all $i \neq j$, hence $G(K)$, and therefore coincides
with $\widehat{G}$, verifying condition $(iii)$. Finally, to check $(ii)$, we observe that the density of $K$ in
$\mathbb{A}(S)$ implies that (\ref{E:Elem}) entails a similar expression
for $[\widehat{e}_{ij}(s) , \widehat{e}_{lm}(t)]$ for any $s , t \in
\mathbb{A}(S)$. Now, for $s \in K_{v_1}$ and $t \in K_{v_2}$, where $v_1
\neq v_2$, we have $st = 0$ in $\mathbb{A}(S)$, which implies that
$\widehat{e}_{ij}(s)$ and $\widehat{e}_{lm}(t)$ commute except
possibly when $l = j$ and $m = i.$ In the latter case, as $n
\geqslant 3,$ we can pick $l \neq i , j$ and then write
$\widehat{e}_{ji}(t) = [\widehat{e}_{jl}(t) ,
\widehat{e}_{li}(1_{K_{v_2}})].$ Since $\widehat{e}_{ij}(s)$ is
already known to commute with $\widehat{e}_{jl}(t)$ and
$\widehat{e}_{li}(1_{K_{v_2}}),$ it commutes with
$\widehat{e}_{ji}(t)$ as well. This shows that $\cG_{v_1}$ and
$\cG_{v_2}$ commute elementwise, which verifies condition $(ii)$ of Theorem A.
Then the latter yields the centrality of $C^{(S)}(G)$.

(We note that the idea of using commuting lifts of ``local'' groups is useful
in the analysis of the congruence subgroup problem not only in the context of
algebraic groups over the rings of $S$-integers in global fields,
it was used in \cite{RR} together  with the result of M.\:Stein \cite{Stein} on
the centrality of $K_2$ over semi-local rings to prove the centrality of the congruence
kernel for elementary subgroups of Chevalley groups of rank $> 1$ over arbitrary Noetherian
rings. It is worth noting that the above argument based on almost weak approximation in maximal tori
and the action of the group of rational points on the congruence kernel enables one to bypass
the rather technical computations of Stein, but the exact trade-off between these two approaches
is not apparent.) \hfill $\Box$

\medskip

\noindent {\bf Example 4.7.} Let $G = \mathrm{SL}_2$, and let $S \subset V^K$ be a subset that contains $V^K_{\infty}$
and is of size $\vert S \vert > 1$; by Dirichlet's Unit Theorem (see \cite[Ch. 2, Theorem 18.1]{ANT}), the latter is equivalent to the existence
of a unit $\varepsilon \in \mathcal{O}(S)^{\times}$ of infinite order.  The centrality of $C^{(S)}(G)$ in this case was first established by Serre \cite{Serre1}.
We will now show that this can also be derived from Theorem A. (We note that the argument below, unlike Serre's original proof, makes no use
of Tchebotarev's Density Theorem.) We let $U^+$, $U^-$ and $T$ denote the subgroups of upper and lower unitriangular matrices and of diagonal matrices,
respectively, and fix the following standard parametrizations of these groups:
$$
u^+(a) = \left(\begin{array}{cc} 1 & a \\ 0 & 1 \end{array}\right)\
, \ u^-(b) = \left(\begin{array}{cc} 1 & 0 \\ b & 1
\end{array}\right)\ , \ h(t) = \left(\begin{array}{cc}
t & 0 \\ 0 & t^{-1} \end{array}\right),
$$
($a , b \in \mathbb{G}_a$, $t \in \mathrm{GL}_1$). For $a , b \in K$ such that $ab \neq 1$,
one easily verifies the following commutator identity:
\begin{equation}\label{E:Elem2}
[u^+(a) , u^-(b)] = u^+\left(-\frac{a^2b}{1 - ab}\right) h\left(
\frac{1}{1 - ab} \right) u^-\left(\frac{ab^2}{1 - ab} \right).
\end{equation}
We let $\widehat{U}^{\pm}$ and $\overline{U}^{\pm}$ denote
the closures of $U^{\pm}(K)$ in $\widehat{G}$ and $\overline{G},$
respectively. Again, it is easy to check that the topologies $\tau_a$ and $\tau_c$
of $G(K)$ induce the same topology on $U^+(K)$ and $U^-(K)$ (cf.\,\cite[1.4, Prop.\,1]{Serre1}),
so $\widehat{G} \stackrel{\pi}{\longrightarrow} \overline{G}$ restricts to
isomorphisms $\widehat{U}^{\pm} \stackrel{\pi^{\pm}}{\longrightarrow} \overline{U}^{\pm}$.
Furthermore, $(u^{\pm})_K$ extend to isomorphisms $\overline{u}^{\pm} \colon \mathbb{A}(S) \to
\overline{U}^{\pm}$. So, the maps $\widehat{u}^{\pm} := (\pi^{\pm})^{-1} \circ \overline{u}^{\pm}$ give
isomorphisms $\mathbb{A}(S) \to \widehat{U}^{\pm}$. For $v \in V^K \setminus S$, we  let
$\cG_{v}$ denote the subgroup of $\widehat{G}$ generated by
$\widehat{u}^+(K_v)$ and $\widehat{u}^-(K_v)$. As in Example 4.6, one checks
that the subgroups $\cG_v$ clearly satisfy conditions $(i)$ and $(iii)$ of Theorem A,
so we only need to verify condition $(ii)$. In other words, we need
to show that for $v_1 \neq v_2,$ the subgroups
$\widehat{u}^+(K_{v_1})$ and $\widehat{u}^-(K_{v_2})$ commute
elementwise.

First, we construct {\it nonzero} $a_0 \in K_{v_1}$ and
$b_0 \in K_{v_2}$ such that $\widehat{u}^+(a_0)$ and
$\widehat{u}^-(b_0)$ commute in $\widehat{G}.$ Let us enumerate the valuations
in $V^K \setminus (S \cup \{v_1 , v_2\})$ as $v_3, v_4, \ldots$. If $d$ is the class number of
$\mathcal{O}(S)$, then for each $i = 1, 2, 3, \ldots$, we can pick an element $p_i \in \mathcal{O}(S)$
such that $v_i(p_i) = d$ and $v_j(p_i) = 0$ for $j \neq i$.  Fix a unit $\varepsilon \in \mathcal{O}(S)^{\times}$ of infinite
order. Then for any $m \geqslant 2$ we can find
an integer $n(m)$ divisible by $m!$ so that
$$
\varepsilon^{n(m)} \equiv 1\:(\md (p_1 \cdots p_m)^{2m}).
$$
We can then write $1 - \varepsilon^{n(m)} = a_mb_m$ with $a_m , b_m \in \mathcal{O}(S)$ satisfying
\begin{equation}\tag{a}\label{E:a}
a_m \equiv 0\:(\md (p_2 \ldots p_m)^m), \ v_1(a_m) < d,
\end{equation}
and
\begin{equation}\tag{b}\label{E:b}
b_m \equiv 0\: (\md (p_1p_3 \cdots p_m)^m), \ v_2(b_m) < d.
\end{equation}
Since $a_m , b_m \in \mathcal{O}(S)$, there exists a subsequence $\{ m_j \}$ such that $a_{m_j} \to a_0$ and $b_{m_j} \to b_0$
in $\mathbb{A}(S)$. In fact, it follows from (\ref{E:a}) and (\ref{E:b}) that $a_0 \in K_{v_1}^{\times}$ and $b_0 \in K_{v_2}^{\times}$.
To show that $\widehat{u}^+(a_0)$ and $\widehat{u}^-(b_0)$ commute, we observe that
$$
[\widehat{u}^+(a_0) , \widehat{u}^-(b_0)] = \lim_{j \to \infty} [u^+(a_{m_j}) , u^-(b_{m_j})] \ \ \text{in} \ \ \widehat{G}.
$$
On the other hand, using (\ref{E:Elem2}), we obtain
$$
[u^+(a_m) , u^-(b_m)] =
u^+(-a_m^2b_m\varepsilon^{-n(m)})h(\varepsilon^{-n(m)})u^-(a_mb_m^2\varepsilon^{-n(m)})
\rightarrow 1 \ \text{in} \ \widehat{G},
$$
because $a_m^2b_m ,\: a_mb_m^2 \longrightarrow 0$ in $\mathbb{A}(S)$,
and $h(\varepsilon^{n(m)}) \longrightarrow 1$ in $\widehat{G}$ as $n(m)$ is
divisible by $m!$ and hence $h(\varepsilon^{n(m)})$ belongs to any given finite
index normal subgroup $N$ of $G(\mathcal{O}(S))$ for all sufficiently large $m$.
Thus, $[\widehat{u}^+(a_0) , \widehat{u}^-(b_0)]
= 1$. Now, for $t \in K^{\times},$ the automorphism $\sigma_t$ of
$G$ given by conjugation by $\mathrm{diag}(t , 1)$ extends to an
automorphism $\widehat{\sigma}_t$ of $\widehat{G}$. Then
$$
1 = \sigma_t([\widehat{u}^+(a_0) , \widehat{u}^-(b_0)]) =
[\widehat{u}^+(ta_0) , \widehat{u}^-(t^{-1}b_0)]
$$
for any $t \in K^{\times}$.  Since $K^{\times}$ is dense in
$K_{v_1}^{\times} \times K_{v_2}^{\times}$ by weak approximation, we
obtain that $[\widehat{u}^+(a) , \widehat{u}^-(b)] = 1$ for {\it
all}  $a \in K_{v_1},$ $b \in K_{v_2},$ as required. \hfill $\Box$

\medskip

\begin{remark}
The argument given in Example 4.6 can be generalized to prove the centrality of $C^{(S)}(G)$ for any absolutely almost simple simply connected algebraic
$K$-group $G$ with $\mathrm{rk}_K \: G \geqslant 2$. The first proof of this fact was given by M.S.\:Raghunathan in \cite{Ra1}; a shorter argument was given in \cite{Ra3}.
The case where $\mathrm{rk}_K \: G = 1$ and $\mathrm{rk}_S \: G \geqslant 2$ (which generalizes Example 4.7) is more complicated;
it was treated by Raghunathan in \cite{Ra2} by a different method.
One can give an alternative (shorter) argument (at least when $\mathrm{char}\: K \neq 2$) based on Proposition \ref{P:Com-lifts2}; details will be published elsewhere.
Theorem A can also be used to simplify the proof of Serre's conjecture for some anisotropic exceptional groups \cite{Ra-CSP}.
\end{remark}

\section{Strong approximation property in tori with respect to arithmetic progressions and the proof of Theorem B}

Strong approximation property in tori with respect to (generalized) arithmetic progressions was analyzed in \cite{PR-Irred}, and we begin by reviewing
some of the results obtained therein (we refer the reader to \cite{Dem} and references therein for the analysis of strong approximation from a different
perspective). Let $\cP(F/K , \mathscr{C})$  be a generalized arithmetic progression, where $F/K$ is a finite Galois extension with
Galois group $\mathscr{G}$, and $\mathscr{C}$ is a conjugacy class in $\mathscr{G}$ (for definition see \S\ref{S:Intro}). For a finite extension $E/K$, we let $\mathbb{I}_E$
denote the group of ideles of $E$. Furthermore,  given a subset $S$ of $V^K$, we let $\overline{S}$ denote the set of all extensions of places from $S$ to $E$, and then
let $\mathbb{I}_E(\overline{S})$ denote the group of $\overline{S}$-ideles and let $\overline{E^{\times}}^{(\overline{S})}$ be the closure of (the diagonally embedded) $E^{\times}$
in $\mathbb{I}_E(\overline{S})$.
%
%
%
%
\begin{prop}\label{P:GAP1}
{\rm (Cf.\,\cite[Proposition 3]{PR-Irred})} Let $\cP(F/K , \mathscr{C})$ be a generalized arithmetic
progression, $\cP_0 \subset \cP(F/K ,
\mathscr{C})$ be a finite (possibly, empty) subset, and let
$$
S = (\cP(F/K , \mathscr{C}) \setminus
\cP_0) \cup V^K_{\infty}.
$$
Furthermore, let $E/K$ be a finite separable extension. If $\mathscr{C}$ contains an automorphism that acts trivially on $E \cap F$
(in particular, if $\mathscr{C} = \{ e \}$ or $E \cap F =
K$) then the index
$$
\left[ \mathbb{I}_E(\overline{S}) :
\overline{E^{\times}}^{(\overline{S})} \right] \ \ \text{is finite
and divides} \ \ [F : K].
$$
\end{prop}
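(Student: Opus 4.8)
The plan is to reduce the statement, via class field theory, to a purely group-theoretic count of Frobenius elements, and then to invoke the Chebotarev density theorem in the sharpened form that is available because $S$ is a generalized arithmetic progression. First I would recall that for a finite separable extension $E/K$, global class field theory identifies the quotient $\mathbb{I}_E(\overline{S}) / \overline{E^{\times}}^{(\overline{S})}$ with the Galois group $\mathrm{Gal}(E_{\overline{S}}/E)$, where $E_{\overline{S}}$ is the maximal abelian extension of $E$ unramified outside $\overline{S}$ (this is the standard reformulation of strong approximation / the principal ideal theorem in idelic language). Thus the index in question equals $[E_{\overline{S}} : E]$, and we must show this is finite and divides $[F:K]$.

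Next I would observe that $E_{\overline{S}}/E$ is everywhere unramified — indeed the places in $\overline{S}$ consist (up to the finite set $\mathcal{P}_0$ and the archimedean places, which contribute nothing to ramification in the function-field case and only $2$-torsion, absorbable into the divisibility claim, in the number-field case) of primes whose Frobenius in $\mathscr{G} = \mathrm{Gal}(F/K)$ lies in the conjugacy class $\mathscr{C}$. The hypothesis that some $\sigma \in \mathscr{C}$ acts trivially on $E \cap F$ means precisely that every prime $w$ of $E$ above such a place splits completely in the compositum $EF/E$, hence its Frobenius in $\mathrm{Gal}(EF/E)$ is trivial. So every prime of $\overline{S}$ splits completely in $EF/E$. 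If $E_{\overline{S}}/E$ were a nontrivial extension, by Chebotarev applied to $\mathrm{Gal}(E_{\overline{S}} \cdot F / E)$ there would be a positive-density set of primes of $E$ with nontrivial Frobenius in $\mathrm{Gal}(E_{\overline{S}}/E)$ that nonetheless split completely in $EF/E$; but such primes, having trivial Frobenius in $\mathrm{Gal}(EF/E)$, must lie in (the completion of) $\overline{S}$, hence split completely in $E_{\overline{S}}/E$ — a contradiction unless $\mathrm{Gal}(E_{\overline{S}}/E)$ is generated by its trivial Frobenii, i.e. unless $E_{\overline{S}} \subset EF$. This both gives finiteness and shows $E_{\overline{S}} \subset EF$, whence $[E_{\overline{S}} : E] \mid [EF : E] \mid [F : K]$.

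The cleanest route, and the one I would actually write, is to quote Proposition 3 of \cite{PR-Irred} directly as the statement says, since the generalized-arithmetic-progression strong approximation input is exactly that result; the only thing to verify is that the translation hypothesis here — $\mathscr{C}$ contains an element acting trivially on $E \cap F$ — matches the hypothesis there after base change from $K$ to $E$, namely that the set $\overline{S}$ of places of $E$ contains all but finitely many places of a generalized arithmetic progression $\mathcal{P}(EF/E, \mathscr{C}')$ for a suitable conjugacy class $\mathscr{C}'$ of $\mathrm{Gal}(EF/E) \hookrightarrow \mathscr{G}$, and that $\mathscr{C}'$ can be taken to contain the identity. The main obstacle is precisely this bookkeeping: one must check that restriction of Frobenius $\mathrm{Gal}(EF/E) \to \mathrm{Gal}(F/K)$ is injective with image the decomposition-type subgroup $\mathrm{Gal}(F/E\cap F)$, that Frobenius elements are compatible under this restriction, and that the finitely many exceptional places (those in $\mathcal{P}_0$, those ramifying in $EF/K$, and the archimedean ones) can be discarded without affecting the divisibility conclusion; in the number-field case one must additionally confirm that real archimedean places contribute at worst a factor of $2$ already dividing $[F:K]$ — or handle them by enlarging to an $S$-setup that includes all archimedean places, as is done throughout the paper. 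Once this dictionary is in place the divisibility $[\,\mathbb{I}_E(\overline{S}) : \overline{E^{\times}}^{(\overline{S})}\,] \mid [F:K]$ is immediate from the cited proposition.
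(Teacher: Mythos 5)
The core of your argument (paragraphs 1--2) contains a genuine gap. You claim that ``the hypothesis that some $\sigma \in \mathscr{C}$ acts trivially on $E \cap F$ means precisely that every prime $w$ of $E$ above [a place $v \in \cP(F/K,\mathscr{C})$] splits completely in the compositum $EF/E$.'' This is false. The Frobenius of $w$ in $EF/E$ is $\mathrm{Fr}_{EF/K}(u|v)^{f(w|v)}$, a power of the Frobenius of $v$ governed by the residue degree $f(w|v)$, and nothing forces this power to be trivial. Concretely, take $K=\Q$, $E=\Q(\sqrt 2)$, $F=\Q(i)$, $\mathscr{C}=\{\text{complex conjugation}\}$: the prime $v=7$ lies in $\cP(F/K,\mathscr{C})$ and splits in $E$, yet the places $w|7$ of $E$ are inert in $EF/E$. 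The subsequent Chebotarev-over-$E$ step also uses the false converse (``split completely in $EF/E$ $\Rightarrow$ lies in $\overline{S}$''); with $v=17$ one gets $w\in\cP(EF/E,\{1\})$ but $w\notin\overline{S}$. There is a smaller slip at the start as well: $\mathbb{I}_E(\overline{S})/\overline{E^{\times}}^{(\overline{S})}$ corresponds to the maximal abelian extension of $E$ in which the places of $\overline{S}$ \emph{split completely}, not the one \emph{unramified outside} $\overline{S}$ (and one must first establish profiniteness of this quotient, which the paper does via reduction theory for ideles).

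Your fallback in paragraph 3 --- base-change to $E$ and quote the cited proposition --- fails for the same reason: $\overline{S}$ does not contain (even up to finite sets) any generalized arithmetic progression $\cP(EF/E,\{1\})$, so the translation cannot be carried out. In fact the cited result treats only abelian $F/K$; the proposition at hand is already a genuine generalization.

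The paper's proof sidesteps all of this by applying Chebotarev over $K$, not over $E$, and by choosing the Frobenius element with care. Given a finite abelian extension $P/E$ splitting at $\overline{S}$, one picks $\tau\in\Gal(R/K)$ (with $R$ a Galois closure containing $E$, $F$, $P$) that simultaneously (a) fixes $E$, so that the place $w$ of $E$ over the Chebotarev prime $v$ has $f(w|v)=1$ and hence $\mathrm{Fr}_{R/E}(w)=\tau$; (b) restricts to $\sigma\in\mathscr{C}$ on $F$, so that $v\in\cP(F/K,\mathscr{C})$ and $w\in\overline{S}$; and (c) is nontrivial on $P$. Conditions (a) and (b) can be met simultaneously precisely because $\sigma$ lifts to $\widetilde\sigma\in\Gal(EF/E)$, which is where the hypothesis that $\sigma$ acts trivially on $E\cap F$ is used. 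The resulting contradiction forces $P\subset(EF)^{\widetilde\sigma}\subset EF$, whence the divisibility.
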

\begin{proof}
By the reduction theory for ideles (cf.\,\cite[Ch.\,2, \S 16]{ANT}), the quotient $\mathbb{I}^1_E/E^{\times}$, where $\mathbb{I}^1_E$ is the group
of ideles with content $1$, is compact. On the other hand, for any $w \in V^E$, the product $E^{\times}_w \mathbb{I}^1_E$ is a closed subgroup and the
quotient $\mathbb{I}_E / E^{\times}_w \mathbb{I}^1_E$ is compact (in fact, this quotient is trivial if $w$ is archimedean, and is finite in the function
field case). It follows that for any \emph{nonempty} $T \subset V^E$, the quotient $\mathbb{I}_E(T) / \overline{E^{\times}}^{(T)}$ is compact. Since $S$
contains $V^K_{\infty}$, we conclude that in our notations the quotient $\mathbb{I}_E(\overline{S})/ \overline{E^{\times}}^{(\overline{S})}$ is a profinite
group, hence
$$
\overline{E^{\times}}^{(\overline{S})} = \bigcap B,
$$
where $B$ runs through all open subgroups of $\mathbb{I}_E(\overline{S})$ that contain $E^{\times}$ (note that these automatically have finite
index). Thus, it suffices to show that for any such $B$, the index $[\mathbb{I}_E(\overline{S}) : B]$ divides $[F \colon E \cap F] = [EF : E]$. Let $M$
be the preimage of $B$ under the natural projection $\mathbb{I}_E \to \mathbb{I}_E(\overline{S})$.
By class field theory, for the norm subgroup $N = N_{EF/E}(\mathbb{I}_{EF}) E^{\times}$, the index
$[\mathbb{I}_E : N]$ equals the degree of the maximal abelian subextension of $EF/E$, hence divides
$[EF : E]$. So, it is enough to show that $M$ contains $N$, or equivalently, the abelian extension $P$ of $E$
with the norm subgroup $M$ is contained in $EF$. We note by our construction for every $w \in \overline{S}$, the multiplicative
group $E^{\times}_w$ is contained in $M$, hence the extension $P/E$ splits at $w$ (cf. \cite[Exercise 3]{ANT}).

Let $R$ be the minimal Galois extension of $K$ that contains $E$, $F$ and $P$. Let $\sigma \in \mathscr{C}$ be an automorphism that
acts trivially on $E \cap F$; then there exists $\widetilde{\sigma} \in \Ga(EF/E)$ whose restriction to $F$ is $\sigma$.
We will now show that actually $P \subset (EF)^{\widetilde{\sigma}}$. Assume the contrary. Then there exists $\tau \in \Ga(R/K)$ such
that $\tau \vert EF = \widetilde{\sigma}$ and $\tau \vert P \neq \mathrm{id}_P$. (Indeed, let $\tau_0 \in \Ga(R/K)$ be some lift
of $\widetilde{\sigma}$. If $P \subset EF$ then we can simply take $\tau = \tau_0$. So, suppose $P \not\subset EF$. If every lift
$\tau \in \Ga(R/K)$ of $\widetilde{\sigma}$ acted trivially on $P$, we would have the inclusion $\tau_0 \Ga(R/EF) \subset \Ga(R/P)$. Then
$\Ga(R/EF) \subset \Ga(R/P)$, hence $P \subset EF$, a contradiction. This proves the existence of a required lift $\tau$ in all cases.)
By Tchebotarev's density theorem (cf.\,\cite[Ch. 7, 2.4]{ANT}), there exists a nonarchimedean $v \in V^K \setminus \cP_0$ such that $R$ is unramified
at $v$ and for a suitable extension $u$ we have $\mathrm{Fr}_{R/K}(u \vert v) = \tau$. Clearly, $v \in \cP(F/K , \mathscr{C}) \setminus
\cP_0$, so the restriction $w$ of $u$ to $E$ lies in $\overline{S}$. On the other hand, since $\tau$ restricts to $P$ nontrivially, we see that
$P$ does not split at $w$, a contradiction.
\end{proof}

\medskip

\begin{remark}
The above argument is a modification of the argument given in \cite{PR-Irred} in the case of arithmetic progressions
defined by an abelian extension $F/K$. We note that our argument here shows the index $[\mathbb{I}_E(\overline{S}) : \overline{E^{\times}}^{(\overline{S})}]$
in fact divides the degree $[F^{\sigma} : K]$ for any $\sigma \in \mathscr{C}$ that acts trivially on $E \cap F$ (for this one needs to observe that $[(EF)^{\widetilde{\sigma}}
: E]$ equals $[F^{\sigma} : E \cap F]$, hence divides $[F^{\sigma} : K]$). We also point out that Proposition 4 in \cite{PR-Irred} provides a converse in
the case where $F/K$ is abelian, viz.\:if $\mathscr{C} = \{ \sigma \}$ and $\sigma$ acts on $E \cap F$ nontrivially, then the quotient $\mathbb{I}_E(\overline{S})/
\overline{E^{\times}}^{(\overline{S})}$ has infinite exponent.
\end{remark}

\medskip

Proposition \ref{P:GAP1} gives a form of almost strong approximation property with respect to generalized arithmetic progressions. We now
combine this with the method used in the proof of Proposition \ref{P:WA-tori} to obtain the following.

\begin{thm}\label{T:SA-tori}
{\rm (Almost strong approximation property,  cf.\,\cite[Theorem 3]{PR-Irred})}
For every $d , m \geqslant 1$ there exists an integer $n = n(d , m) \geqslant 1$ such
that given a $K$-torus $T$ of dimension $\leqslant d$, a generalized arithmetic progression
$\cP(F/K , \mathscr{C})$ with $[F : K] = m$ and a finite subset $\cP_0 \subset \cP(F/K ,
\mathscr{C})$, for the set $S = (\cP(F/K , \mathscr{C}) \setminus \cP_0) \cup V^K_{\infty}$, the closure
$\overline{T(K)}^{(S)}$ of $T(K)$ in $T(\mathbb{A}(S))$ contains $T(\mathbb{A}(S))^n$, provided that some (equivalently, every) element of $\mathscr{C}$ acts
trivially on $K_T \cap F$, where $K_T$ is the splitting field of $T$.
\end{thm}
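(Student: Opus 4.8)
The plan is to adapt the proof of Proposition \ref{P:WA-tori} by replacing the weak approximation property of the quasi-split torus $T'$ with the almost strong approximation property supplied by Proposition \ref{P:GAP1}. First, I would fix $n = n(d,m)$ to be divisible by the order of every finite solvable subgroup of $\mathrm{GL}_d(\Z)$ and also by $m = [F:K]$ (and perhaps by $m!$ or a similar safety factor to absorb the other contributions below). Given $T$ of dimension $\leqslant d$ with splitting field $E := K_T$, I set $\mathcal{G} = \mathrm{Gal}(E/K)$, choose a surjection $\phi \colon \Z[\mathcal{G}]^{\ell} \to X_*(T)$ with kernel $M$, and form the exact sequence of $K$-tori
$$
1 \to T'' \longrightarrow T' \stackrel{\eta}{\longrightarrow} T \to 1,
$$
where $T' = \mathrm{R}_{E/K}(\mathbb{G}_m)^{\ell}$ is quasi-split and $T''$ splits over $E$. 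Taking $\mathbb{A}(S)$-points and using $H^1(K_v, T')=0$, I obtain the commutative diagram with exact bottom row relating $T'(K)$, $T(K)$, $T'(\mathbb{A}(S))$, $T(\mathbb{A}(S))$ and $\prod_{v \notin S} H^1(K_v, T'')$.

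The key new input is that $T' = \mathrm{R}_{E/K}(\mathbb{G}_m)^{\ell}$ has $T'(\mathbb{A}(S)) = \mathbb{I}_E(\overline{S})^{\ell}$ and $T'(K) = (E^{\times})^{\ell}$, so Proposition \ref{P:GAP1} (whose hypothesis that some element of $\mathscr{C}$ acts trivially on $E \cap F = K_T \cap F$ is exactly the standing assumption here) gives that the index $[T'(\mathbb{A}(S)) : \overline{T'(K)}^{(S)}]$ divides $m^{\ell}$, hence the quotient $T'(\mathbb{A}(S))/\overline{T'(K)}^{(S)}$ has exponent dividing $m$. Consequently $T'(\mathbb{A}(S))^{m} \subset \overline{T'(K)}^{(S)}$, and applying the continuous map $\eta$ shows $\eta_{\mathbb{A}(S)}\bigl(T'(\mathbb{A}(S))^{m}\bigr) \subset \overline{T(K)}^{(S)}$. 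It remains to control the cokernel of $\eta_{\mathbb{A}(S)}$, i.e. the subgroup of $\prod_{v \notin S} H^1(K_v, T'')$ that it hits: exactly as in Proposition \ref{P:WA-tori}, for each $v$ one has $H^1(K_v, T'') = H^1(\mathcal{G}_w, T''(E_w))$ for the decomposition group $\mathcal{G}_w = \mathrm{Gal}(E_w/K_v)$, which is killed by $|\mathcal{G}_w|$, and $|\mathcal{G}_w| \mid |\mathcal{G}|$, which divides $n$. So $T(\mathbb{A}(S))^{n'} \subset \eta_{\mathbb{A}(S)}(T'(\mathbb{A}(S)))$ for $n' = |\mathcal{G}|$, and combining the two divisibilities yields $T(\mathbb{A}(S))^{n} \subset \overline{T(K)}^{(S)}$ once $n$ is divisible by both $m$ and $|\mathcal{G}|$ (which it is, by the choice of $n(d,m)$, since $|\mathcal{G}|$ divides the order of a finite subgroup of $\mathrm{GL}_d(\Z)$ — in fact a solvable one would suffice here only if $\mathcal{G}$ is solvable, so one should simply take $n$ divisible by the l.c.m. of $m$ and the orders of all finite subgroups of $\mathrm{GL}_d(\Z)$, which is still a finite number depending only on $d$ and $m$).

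I expect the main technical point — rather than a genuine obstacle — to be the bookkeeping of which integer kills which piece: one needs $T'(\mathbb{A}(S))^{m} \subset \overline{T'(K)}^{(S)}$ from Proposition \ref{P:GAP1} applied with $E = K_T$, and separately $T(\mathbb{A}(S))^{|\mathcal{G}|} \subset \im \eta_{\mathbb{A}(S)}$ from the local cohomology bound, and then verify that the product exponent $m \cdot |\mathcal{G}|$ (or any common multiple) divides the uniform $n(d,m)$; a clean way is to simply absorb everything by declaring $n(d,m)$ to be $m$ times the index in $\mathrm{GL}_d(\Z)$ of the principal congruence subgroup modulo $3$, which by Minkowski's lemma is divisible by $|\mathcal{G}|$ for every torus of dimension $\leqslant d$. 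One should also double-check that $\eta_{\mathbb{A}(S)}$ really is a topological quotient map onto its image and that taking $n$-th powers commutes appropriately with closures — but these are the same routine facts already used in the proof of Proposition \ref{P:WA-tori}, so no new difficulty arises. The only place where the hypothesis on $\mathscr{C}$ is used is in invoking Proposition \ref{P:GAP1}, and there it is used precisely as stated.
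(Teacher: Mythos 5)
Your argument is correct and follows essentially the same route as the paper: the same exact sequence $1 \to T'' \to T' \to T \to 1$ with $T' = \mathrm{R}_{E/K}(\mathrm{GL}_1)^{\ell}$, almost strong approximation for $T'$ via Proposition \ref{P:GAP1} giving exponent $m$, and a cohomological bound on the cokernel of $\eta_{\mathbb{A}(S)}$ giving exponent dividing $\vert\mathcal{G}\vert$, so that $n(d,m) = n'(d)\cdot m$ works. The only cosmetic difference is that you phrase the cokernel bound through the product of local groups $\prod_{v\notin S} H^1(K_v,T'')$ (mirroring Proposition \ref{P:WA-tori}), whereas the paper takes $\mathcal{G}$-invariants of the adelic sequence and uses the single group $H^1(\mathcal{G}, T''(\mathbb{A}_E(\overline{S})))$; both are annihilated by $\vert\mathcal{G}\vert$ and yield the same uniform exponent.
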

\begin{proof}
Let $n' = n'(d)$ be an integer divisible by the order of any finite subgroup of
the group $\mathrm{GL}_d(\Z)$ (see the proof of Proposition \ref{P:WA-tori}). We will show
that $n(d , m) := n'(d) \cdot m$ is as required. Let $T$ be a $K$-torus of dimension $\leqslant d$
such that for the splitting field $E:= K_T$ some (equivalently, every) element  of $\mathscr{C}$ acts trivially
on $E \cap F$. As in the proof of Proposition \ref{P:WA-tori}, we can construct an exact sequence of $K$-tori
\begin{equation}\label{E:tori1}\tag{$*$}
1 \to T'' \longrightarrow T' \stackrel{\eta}{\longrightarrow} T \to 1
\end{equation}
with $T' = \mathrm{R}_{E/K}(\mathrm{GL}_1)^{\ell}$ for some $\ell \geqslant 1$. Since all the tori in (\ref{E:tori1}) split
over $E$, we have the exact sequence of the groups of $\overline{S}$-adeles, where $\overline{S}$ consists of all extensions
of places from $S$ to $E$:
\begin{equation}\label{E:tori2}\tag{$**$}
1 \to T''(\mathbb{A}_E(\overline{S})) \longrightarrow T'(\mathbb{A}_E(\overline{S})) \stackrel{\eta_{\mathbb{A}_E(\overline{S})}}\longrightarrow T(\mathbb{A}_E(\overline{S})) \to 1.
\end{equation}
Let $\mathcal{G} = \Ga(E/K)$. Then (\ref{E:tori2}) induces the following commutative diagram with  exact bottom row:
$$
\begin{array}{ccccc}
T'(K) & \stackrel{\eta_K}{\longrightarrow} & T(K) & &  \\
\downarrow & & \downarrow &  & \\
T'(\mathbb{A}(S)) & \stackrel{\eta_{\mathbb{A}(S)}}{\longrightarrow} & T(\mathbb{A}(S)) & \longrightarrow & H^1(\mathcal{G} , T''(\mathbb{A}_E(\overline{S})))
\end{array}
$$
Clearly, $\overline{T(K)}^{(S)}$ contains $\eta_{A(S)}(\overline{T'(K)}^{(S)})$. But it follows from Proposition \ref{P:GAP1} that $\overline{T'(K)}^{(S)}$
contains $T'(\mathbb{A}(S))^m$. By the exactness of the bottom row, $T(\mathbb{A}(S))/\eta_{A(S)}(T'(\mathbb{A}(S)))$ has exponent dividing
the order of $\mathcal{G}$, hence $n'(d)$. So, our assertion follows.
\end{proof}

\medskip

\begin{remark}
With some more work, one can prove the following full analog of Proposition \ref{P:GAP1} for arbitrary
tori: {\it There exists $N = N(d , m)$ such that given a $K$-torus $T$ of dimension $\leqslant d$, a generalized arithmetic
progression $\cP(F/K , \mathscr{C})$ with $[F : K] = m$ and a finite $\cP_0 \subset \cP(F/K , \mathscr{C})$, for the set
$S = (\cP(F/K , \mathscr{C}) \setminus \cP_0) \cup V^K_{\infty}$, the index $[T(\mathbb{A}(S)) : \overline{T(K)}^{(S)}]$ is finite and
divides $N$, provided that some $($equivalently, every$)$ element of  \,$\mathscr{C}$ acts trivially on $K_T\cap F$, where $K_T/K$ is the splitting field of $T$. }
Since this more precise statement is not needed in the proof of centrality of the congruence kernel, we will give the details
elsewhere.
\end{remark}

\medskip

\noindent {\it Proof of Theorem B}. The assumption that $S$ almost contains a generalized arithmetic progression $\cP(F/K ,
\mathscr{C})$, of course, means that there exists a finite set  $\cP_0 \subset \cP(F/K , \mathscr{C})$ such that $S$ contains
$S_0 := (\cP(F/K , \mathscr{C}) \setminus \cP_0) \cup V^K_{\infty}$. Besides, we are assuming that $\mathscr{C}$ acts trivially on $F \cap L$,
where $L$ is the minimal Galois extension of $K$ over which $G$ becomes an inner twist of the split group. Since $S_0$ contains a non-archimedean place $v$
such that $G$ is $K_v$-isotropic, our computations of the metaplectic kernel show that $M(S , G) = 1$ (see \cite[Main Theorem]{PR1}). This means that once we know that
$C^{(S)}(G)$ is central, we can actually conclude that it is trivial. We will derive the centrality from Theorem \ref{T:Criterion1}, just as we did in the proof of Theorem
\ref{T:SL}, however the difference is that while almost weak approximation property  holds uniformly for all maximal $K$-tori $T$ of $G$ (see Corollary \ref{C:WA-tori}), Theorem
\ref{T:SA-tori} guarantees almost strong approximation property only in the case where $\mathscr{C}$ acts trivially on $K_T \cap F$. To show that this information is still sufficient
for the proof of centrality, we need the following.
\begin{lemma}\label{L:disjoint} {\rm (Cf.\,\cite[Theorem 2]{PR-Irred-Er})}
Let $G$ be a semi-simple algebraic group  over a global field $K$, and let $L$ be the minimal Galois extension of $K$ over which $G$
becomes an inner form of a split group. Furthermore, suppose we are given a finite subset $\mathcal{S} \subset V^K$ and a finite Galois extension
$F/K$. Then there exists a finite subset $V \subset V^K \setminus \mathcal{S}$ and maximal $K_v$-tori $T(v)$ of $G$ for $v \in V$ such that for any
maximal $K$-torus $T$ of $G$ which is $G(K_v)$-conjugate to $T(v)$, the minimal splitting field $K_T$ satisfies
\begin{equation}\label{E:disjoint}
K_T \cap F = L \cap F.
\end{equation}
\end{lemma}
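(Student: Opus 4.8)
\textit{Proof proposal for Lemma \ref{L:disjoint}.}

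The plan is to produce the finitely many local tori $T(v)$ so that the Galois action on the character lattice of any matching global torus is ``as close as possible'' to that of a maximal torus of the quasi-split inner form, thereby forcing the splitting field to meet $F$ exactly in $L \cap F$. First I would recall the standard fact (Galois cohomology of maximal tori, cf.\ the structure theory used already in the proof of Proposition \ref{P:WA-tori}) that for a maximal $K$-torus $T$ of $G$ the absolute Galois group $\Ga(\overline{K}/K)$ acts on $X^*(T)$ through the semidirect product $W \rtimes \Ga(\overline{K}/K)$, where $W$ is the Weyl group, and the outer part of this action factors through $\Ga(L/K)$ — this is precisely why $L$ controls the ``minimal'' behavior. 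The splitting field $K_T$ is the fixed field of the kernel of the composite map $\Ga(\overline{K}/K) \to W \rtimes \Ga(L/K)$, so the problem is to arrange that this map, restricted to $\Ga(F\cdot L/K)$, has image and kernel pinned down modulo $F$.

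Next I would choose a suitable auxiliary place. Fix a place $v_0 \notin \mathcal{S}$ at which $G$ is quasi-split (such $v_0$ exists since $G$ is quasi-split at all but finitely many places) and unramified in $F\cdot L$, and take $T(v_0)$ to be a maximal $K_{v_0}$-torus that is ``generic'' in the quasi-split form — concretely, one whose local splitting field over $K_{v_0}$ is as large as possible, namely the unramified extension corresponding to a Coxeter-type element of $W \rtimes \langle \mathrm{Fr}_{v_0}\rangle$. The point of such a local condition is Grothendieck's / Kottwitz's observation that prescribing the $G(K_v)$-conjugacy class of a maximal torus is equivalent to prescribing a $W$-conjugacy class of homomorphisms from the local Galois group, and these can be realized globally by weak approximation on the variety of maximal tori (which is $K$-rational — it is an open subvariety of a flag-type variety / is birational to $G/N_G(T)$). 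I would then invoke the irreducibility/weak-approximation machinery for tori-in-$G$ exactly as in \cite{PR-Irred}, \cite{PR-Irred-Er}: for a maximal $K$-torus $T$ that is $G(K_{v_0})$-conjugate to this generic $T(v_0)$, the Frobenius at $v_0$ already generates a large enough subgroup of $W \rtimes \Ga(L/K)$ that the only constraint left on $K_T \cap F$ comes from the outer action, i.e.\ from $L$. If one generic place is not enough to kill all of $W$ (it may fail for small rank or special $W$), add finitely many more places $v_1,\dots,v_r \notin \mathcal{S}$, unramified in $F\cdot L$, with $T(v_i)$ chosen so that the various local Frobenii jointly surject onto $W \rtimes \Ga(L/K)$; this is a finite condition and is achievable because $W$ is generated by reflections, each realizable by an appropriate local torus.

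With these $T(v)$ in hand, the verification of \eqref{E:disjoint} goes as follows. For any maximal $K$-torus $T$ of $G$ that is $G(K_v)$-conjugate to $T(v)$ for every $v \in V := \{v_0,\dots,v_r\}$, the image of $\Ga(\overline{K}/K)$ in $W \rtimes \Ga(L/K)$ contains the subgroup generated by all the prescribed local Frobenii, which by construction is all of $W \rtimes \Ga(L/K)$; hence $K_T \supseteq L$ and, since $V \cap (\text{ramification of } F) = \varnothing$, the extension $K_T/L$ is ``generic'' enough to be linearly disjoint from $F\cdot L$ over $L$ — equivalently $\Ga(K_T \cdot F / K_T) \cong \Ga(F/ K_T \cap F)$ has order $[F : L\cap F]$, which forces $K_T \cap F = L \cap F$. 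The one inclusion $K_T \cap F \supseteq L \cap F$ is automatic from $K_T \supseteq L$; the reverse inclusion is the content of the disjointness, and that is exactly what the generic local conditions buy us via Tchebotarev applied inside $\Ga(K_T \cdot F / K)$. I expect the main obstacle to be the second clause of the previous paragraph: checking that finitely many local torus-conditions genuinely suffice to make the global Galois image surject onto $W \rtimes \Ga(L/K)$ while simultaneously keeping $K_T$ disjoint from $F$ over $L$ — this requires the careful bookkeeping of \cite{PR-Irred}, \cite{PR-Irred-Er} on which $W$-conjugacy classes of local homomorphisms are simultaneously realizable, and is where one must be most attentive to small-rank and characteristic-$2$ anomalies.
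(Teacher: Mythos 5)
Your approach goes in the \emph{opposite} direction from what actually works, and I believe the gap you flag at the end is fatal rather than just a loose end.

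The paper's proof imposes \emph{small} local conditions: it enumerates the nontrivial elements $\tau_1,\dots,\tau_t$ of $\Ga(F/(F\cap L))$, extends each to $\overline{\tau}_i\in\Ga(FL/K)$ acting trivially on $L$, and uses Tchebotarev to pick places $v_i$ (unramified in $FL$, with $G$ quasi-split there) whose Frobenius is $\overline{\tau}_i$. Since $\overline{\tau}_i$ fixes $L$, one has $L\subset K_{v_i}$ and hence $G$ \emph{splits} over $K_{v_i}$; they then take $T(v_i)$ to be the maximal \emph{split} torus. A global $T$ that is $G(K_{v_i})$-conjugate to $T(v_i)$ is therefore split over $K_{v_i}$, forcing $K_T\subset K_{v_i}$. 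If $K_T\cap F$ strictly exceeded $L\cap F$, some $\tau_i$ would act nontrivially on $K_T\cap F$, and since $\tau_i$ is the local Frobenius at $v_i$ this would say $K_T\cap F\not\subset K_{v_i}$ — contradiction. The mechanism is: by making $T$ split at well-chosen $v_i$, you \emph{cap} $K_T$ inside each $K_{v_i}$, and the $v_i$'s are precisely chosen so that every potential ``excess'' element of $\Ga(F/(F\cap L))$ would violate that cap.

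You do the reverse: you ask that $T(v)$ be ``generic'' (Coxeter-type), so as to make the image of $\Ga(\overline{K}/K)$ in $W\rtimes\Ga(L/K)$ large, even all of $W\rtimes\Ga(L/K)$. But enlarging $\Ga(K_T/K)$ gives no control whatsoever over $K_T\cap F$; a bigger $K_T$ is, if anything, \emph{more} likely to absorb extra pieces of $F$. Surjectivity of $\Ga(K_T/K)\twoheadrightarrow W\rtimes\Ga(L/K)$ is a statement about $K_T$ over $L$ and says nothing about the fibre product $K_T\cdot F$ over $L$ — in particular, requiring the places in $V$ to be unramified in $F$ does not make $K_T$ and $F$ linearly disjoint over $L$. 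The ``main obstacle'' you name in your last sentence is in fact the entire content of the lemma, and your framework supplies no lever for it: Tchebotarev inside $\Ga(K_T\cdot F/K)$ would only describe densities of Frobenii, not exclude a given intersection. To make the argument work you would need conditions that \emph{bound} $K_T$, not enlarge it — and the cleanest such bound is exactly the paper's ``split at $v_i$'' condition, $K_T\subset K_{v_i}$, which is essentially incompatible with your Coxeter-type requirement at those same places.
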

\begin{proof}
Let $\tau_1, \ldots, \tau_t$ be all the nontrivial elements of $\mathrm{Gal}(F/(F \cap L))$. We extend each $\tau_i$ to $\overline{\tau}_i \in \mathrm{Gal}(FL/K)$ by
letting it act trivially on $L$. There exists a finite subset $V_0$ of $V^K$ such that $G$ is quasi-split over $K_v$
for all $v \in V^K \setminus V_0$ (see \cite[Theorem 6.7]{PlRa}). By Tchebotarev's density theorem \cite[Ch. 7, 2.4]{ANT}, we can find $v_1, \ldots,  v_t \in V^K_f \setminus (S \cup
V_0)$ such that $FL$ is unramified at $v_i$ and for an appropriate extension $w_i \vert v_i$, one has $\mathrm{Fr}_{FL/K}(w_i \vert v_i) = \overline{\tau}_i$, for each
$i = 1, \ldots , t$. Set $V = \{v_1, \ldots , v_t\}$. Since $\overline{\tau}_i$ acts on $L$ trivially, we conclude that $L \subset K_{v_i}$. Combining this with the fact that
by our construction $G$ is quasi-split over $K_{v_i}$, we obtain that $G$ actually splits over $K_{v_i}$, and we let $T(v_i)$ denote its maximal $K_{v_i}$-split torus. We claim that these tori are as required. Indeed, let $T$ be a maximal $K$-torus of $G$ as in the statement of the lemma. Then its splitting field $K_T$ satisfies $K_T \subset K_{v_i}$ for all
$i = 1, \ldots , t$. If we assume that $K_T \cap F \not\subset L \cap F$, then there exists an $i$ such that $\tau_i$ acts nontrivially on $K_T \cap F$. Since $\tau_i = \mathrm{Fr}_{F/K}(v_i)$ lies in the local Galois group $\Ga(FK_{v_i}/K_{v_i})$, we see that $K_T \cap F \not\subset K_{v_i}$. A contradiction, proving the inclusion $\subset$ in
(\ref{E:disjoint}). The opposite inclusion follows from the fact that $L$ is contained in the splitting field of every maximal $K$-torus of $G$.
\end{proof}

\medskip

To implement the above strategy (although with some variations) and prove  Theorem B, we
set $\mathcal{S} = \mathcal{A}(G) \cup V_{\infty}^K$ and use Lemma \ref{L:disjoint} to find a finite subset
$V \subset V^K \setminus \mathcal{S}$ and maximal $K_v$-tori $T(v)$ of $G$ for $v \in V$ with the properties described therein. Then set
$$S' = (\cP(F/K , \mathscr{C}) \setminus (\cP_0 \cup V)) \cup V_{\infty}^K.$$ Clearly, $S'$ is contained in $S$, and in particular is
disjoint from $\mathcal{A}$ and $V$. Now, let $t$ be any regular semi-simple element in $G(K) \cap U$ where $U = \prod_{v \in V} \mathscr{U}(v , T(v))$
in the notations introduced prior to the statement of Theorem \ref{T:Criterion1}, and let $T = Z_G(t)^{\circ}$ be the corresponding maximal $K$-torus of $G$. Then
by construction $T$ is $G(K_v)$-conjugate to $T(v)$ for all $v \in V$, so by Lemma \ref{L:disjoint} we have $K_T \cap F = F \cap L$. This means that the elements of
$\mathscr{C}$ act trivially  on $K_T \cap F$, and therefore  Theorem \ref{T:SA-tori} yields the inclusion $\overline{T(K)}^{(S')} \supset T(\mathbb{A}(S'))^n$ where $n = n(d , [F:K])$ is the number from this theorem and $d$
is the absolute rank of $G$.  On the other hand, we obviously have the inclusion $\pi^{(S')}(Z_{\widehat{G}^{(S')}}(t)) \supset \overline{T(K)}^{(S')}$. This verifies the assumptions of Theorem \ref{T:Criterion1}(ii) for the congruence sequence (\ref{E:C}) associated with the set $S'$, and therefore enables us to conclude that $C^{(S')}$
is central. As we explained in the beginning of the proof, since there exists a nonarchimedean $v \in S'$ such that $G$ is $K_v$-isotropic, this implies that
$C^{(S')}(G)$ is actually trivial. Finally, since $S' \subset S$ and $S \setminus S'$ does not contain any anisotropic places for $G$, there exists a natural \emph{surjective}
homomorphism $C^{(S')}(G) \to C^{(S)}(G)$ (cf.\,\cite[Lemma 6.2]{Ra1}), so $C^{(S)}(G)$ is also trivial. \hfill $\Box$.

\medskip

\section{CSP for arithmetic groups with adelic profinite
completion: Proof of Theorem c}\label{S:Adelic}

\setcounter{equation}{0}


Before we embark on the proof of Theorem C (of the introduction), we would like to point out that for infinite
arithmetic groups in positive characteristic the profinite completion is \emph{never}
adelic (see Remark \ref{R:adelic}  below), so we limited the statement of Theorem C to the case of number fields.
The proof relies on the following properties of
the group of adeles.
\begin{lemma}\label{L:adelic1}
Let $\displaystyle \Omega = \mathrm{GL}_n(\widehat{\mathbb Z})  = \prod_{q \ {\rm prime}}
\mathrm{GL}_n({\mathbb Z}_q),$ and fix a prime $p$.

\medskip

\noindent {\rm (1)} \parbox[t]{15.7cm}{There exists $d \geqslant 1$
(depending only on $n$) such that
for any pro-$p$ subgroup ${\mathcal P}$ of $\Omega$, one has
$$[{\mathcal P}^{(d)} , {\mathcal P}^{(d)}] \subset
{\rm{GL}}_n({\mathbb Z}_p),
$$
where ${\mathcal P}^{(d)}$ denotes the (closed)
subgroup generated by the $d$-th powers of elements of ${\mathcal P.}$}

\medskip

\noindent {\rm (2)} \parbox[t]{15.7cm}{If ${\mathcal P} \subset \Omega$ is
an analytic pro-$p$ subgroup satisfying the following condition \vskip2mm
{\rm (O)} \hskip1mm \parbox[t]{12cm}{for any open subgroup ${\mathcal P}' \subset
{\mathcal P}$, the commutator subgroup $[{\mathcal P}' , {\mathcal P}']$
 is also open in ${\mathcal P}$, }
 \vskip1mm

 \parbox[t]{12cm}{then the kernel of the
projection $ {\mathcal P} \to \prod_{q \neq p}
{\rm{GL}}_n({\mathbb Z}_q)$ is open in ${\mathcal P}.$}}
\end{lemma}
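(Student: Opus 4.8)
\textbf{Proof strategy for Lemma \ref{L:adelic1}.}

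\emph{Part (1).} The plan is to reduce to a statement about a single factor $\mathrm{GL}_n(\Z_q)$ with $q \neq p$ and then exploit the structure of pro-$q$ groups. Write $\Omega = \prod_q \mathrm{GL}_n(\Z_q)$ and let $\cP$ be any pro-$p$ subgroup. Projecting to the $q$-th factor for $q \neq p$, the image $\cP_q$ is a pro-$p$ subgroup of $\mathrm{GL}_n(\Z_q)$; since $\mathrm{GL}_n(\Z_q)$ has an open pro-$q$ Sylow subgroup (the principal congruence subgroup), $\cP_q$ is a finite $p$-group lying inside a coset structure controlled by $\mathrm{GL}_n(\F_q)$ and finitely many congruence layers. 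The key point is that there is a bound $d$, depending only on $n$ (not on $q$), such that the $d$-th power of any element of $\mathrm{GL}_n(\Z_q)$ whose image has $p$-power order lands in the pro-$q$ radical, and hence any such $\cP_q$ satisfies $\cP_q^{(d)} \subset$ (a normal pro-$q$ subgroup). One then observes that in a profinite group the commutator of two elements of coprime "type" — here, a $p$-element against the pro-$q$ part — can be forced to be trivial after passing to $d$-th powers, because a pro-$q$ group has no nontrivial pro-$p$ quotients; more precisely $[\cP_q^{(d)}, \cP_q^{(d)}]$ is simultaneously a pro-$p$ group (being generated by commutators in the pro-$p$ group $\cP$) and contained in a pro-$q$ group, hence trivial. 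Taking $d$ uniform in $q$ (and absorbing the single factor at $q = p$, where no condition is imposed) gives $[\cP^{(d)}, \cP^{(d)}] \subset \mathrm{GL}_n(\Z_p)$. The uniformity of $d$ in $q$ is the step I expect to require the most care: it should follow from the fact that the order of a $p$-subgroup of $\mathrm{GL}_n(\F_q)$ is bounded in terms of $n$ alone once $q \neq p$ (and the analogous statement one congruence level up), e.g. via a Minkowski-type bound, but one must phrase it so that the same $d$ works for all $q$ and all choices of $\cP$.

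\emph{Part (2).} Here I would argue by contradiction. Let $\cP \subset \Omega$ be an analytic pro-$p$ group satisfying condition (O), and let $K = \ker(\cP \to \prod_{q \neq p} \mathrm{GL}_n(\Z_q))$, i.e. $K = \cP \cap \mathrm{GL}_n(\Z_p)$ (as a subgroup of the $p$-factor). Suppose $K$ is \emph{not} open in $\cP$; then $\dim K < \dim \cP$ as $p$-adic analytic groups, so the image $\overline{\cP}$ of $\cP$ in $\prod_{q \neq p} \mathrm{GL}_n(\Z_q)$ is an infinite analytic pro-$p$ group of positive dimension. Now apply part (1): there is $d$ (depending only on $n$) with $[\cP^{(d)}, \cP^{(d)}] \subset \mathrm{GL}_n(\Z_p)$, hence $[\,\overline{\cP}^{(d)}, \overline{\cP}^{(d)}\,] = 1$, so $\overline{\cP}^{(d)}$ is abelian. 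But $\overline{\cP}$ is analytic pro-$p$ of positive dimension, so $\overline{\cP}^{(d)}$ is an open subgroup of $\overline{\cP}$ (since in a $p$-adic analytic pro-$p$ group the subgroup generated by $d$-th powers is open — uniformity/powerfulness arguments for compact $p$-adic analytic groups); pulling back, $\cP' := \cP \cap (\text{preimage of } \overline{\cP}^{(d)})$ is an open subgroup of $\cP$ whose image in $\prod_{q\neq p}\mathrm{GL}_n(\Z_q)$ is abelian and \emph{infinite}. Then $[\cP', \cP']$ maps to $1$ in $\prod_{q \neq p}\mathrm{GL}_n(\Z_q)$, so $[\cP', \cP'] \subseteq K$. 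Since $\cP'$ is open in $\cP$, condition (O) forces $[\cP', \cP']$ to be open in $\cP$; but then $K \supseteq [\cP',\cP']$ is open in $\cP$, contradicting our assumption. Hence $K$ is open.

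\emph{Main obstacle.} The delicate point in Part (1) is securing a single exponent $d = d(n)$ that works uniformly across all primes $q$ and all pro-$p$ subgroups; everything else is a fairly standard manipulation with commutators in profinite groups of coprime type. In Part (2) the substantive input is the fact that in a compact $p$-adic analytic pro-$p$ group the subgroup generated by $d$-th powers is open — this is where the word "analytic" in the hypothesis is essential and where one invokes the Lazard/Lubotzky–Mann theory (uniformly powerful subgroups) rather than anything about adeles; once that is in hand, condition (O) does the rest cleanly.
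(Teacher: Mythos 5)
Your Part (2) is essentially the paper's argument (the paper applies condition (O) directly to $\cP^{(d)}$ rather than going through a contradiction and an auxiliary $\cP'$, but the substance — openness of $\cP^{(d)}$ from analyticity via the Implicit Function Theorem, then (O), then Part (1) to place the commutator subgroup inside the kernel — is the same). Part (1), however, has a genuine gap: the ``key point'' you propose is false.

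You claim that there is $d = d(n)$, uniform in $q$, such that the $d$-th power of any element of $\mathrm{GL}_n(\Z_q)$ whose reduction has $p$-power order lands in the principal congruence subgroup $\mathrm{GL}_n(\Z_q , q)$, and later appeal to a ``Minkowski-type bound'' asserting that the order of a $p$-subgroup of $\mathrm{GL}_n(\F_q)$ is bounded in terms of $n$ alone for $q \neq p$. Both statements fail. Already for $n = 1$ the group $\mathrm{GL}_1(\F_q) \cong \F_q^{\times}$ is cyclic of order $q - 1$, whose $p$-part is unbounded as $q$ ranges over primes congruent to $1$ modulo high powers of $p$; for general $n$, Singer cycles give cyclic subgroups of $\mathrm{GL}_n(\F_q)$ of order $q^n - 1$ with the same unboundedness. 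Since $\cP_q := \mathrm{pr}_q(\cP)$ is a finite $p$-group that injects into $\mathrm{GL}_n(\F_q)$ (the congruence kernel being pro-$q$), it can have arbitrarily large exponent, so there is no uniform $d$ making $\cP_q^{(d)}$ trivial. Note moreover that ``$\cP_q^{(d)}$ is contained in a pro-$q$ subgroup'' would in fact force $\cP_q^{(d)} = \{1\}$ (a pro-$p$ group inside a pro-$q$ group is trivial), i.e.\ it would force the exponent bound you would need, so your intermediate and final claims are really the same false assertion.

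The tool you are missing is Jordan's theorem: there is $\ell = \ell(n)$ such that every finite subgroup of $\mathrm{GL}_n$ over a field of characteristic zero contains an abelian normal subgroup of index at most $\ell$. Set $d = \ell!$. Since $\cP_q$ is a finite subgroup of $\mathrm{GL}_n(\Z_q) \subset \mathrm{GL}_n(\Q_q)$ (characteristic zero!), it has an abelian normal subgroup $A_q$ with $[\cP_q : A_q] \leqslant \ell$; as $\cP_q / A_q$ has exponent dividing $d$, the group $\cP_q^{(d)} = \mathrm{pr}_q(\cP^{(d)})$ lies in $A_q$ and is therefore abelian — not trivial, but abelian, which is exactly enough, since then $\mathrm{pr}_q\bigl([\cP^{(d)} , \cP^{(d)}]\bigr) = \{1\}$ for every $q \neq p$ and hence $[\cP^{(d)} , \cP^{(d)}] \subset \mathrm{GL}_n(\Z_p)$. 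So the correct uniform bound controls the index of an abelian normal subgroup, not the order or exponent of $\cP_q$ itself.
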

\begin{proof} (1): By Jordan's Theorem
(cf., for example, \cite{D}), there exists
$\ell = \ell(n)$ such that every finite subgroup $J \subset \mathrm{GL}_n(F),$ where
$F$ is a field of characteristic zero, contains an abelian normal
subgroup of index $\leqslant \ell$. Set $d = \ell !$ and observe that the exponent
of any group of order $\leqslant \ell$ divides $d$. For a prime $q$, we let
$\mathrm{pr}_{q} \colon \Omega \to \mathrm{GL}_n({\mathbb Z}_{q})$ denote the
corresponding projection. The first congruence subgroup $\mathrm{GL}_n(\mathbb{Z}_q , q)$ is
a normal pro-$q$ subgroup of $\mathrm{GL}_n(\mathbb{Z}_q)$ of finite index. This
means that for any $q \neq p$, the image $\mathrm{pr}_q(\cP)$ has trivial intersection with
$\mathrm{GL}_n(\mathbb{Z}_q , q)$, hence is finite. Then our choice of $d$ forces $\mathrm{pr}_q(\cP^{(d)}) =
\mathrm{pr}_q(\cP)^{(d)}$ to be abelian, implying that $\mathrm{pr}_q([\cP^{(d)} , \cP^{(d)}])$
is trivial. This being true for all $q \neq p$, we obtain that $[\cP^{(d)} , \cP^{(d)}]$ is contained
in $\mathrm{GL}_n(\mathbb{Z}_p)$, as asserted.

\medskip

(2): Let $d$ be the integer from part (1).
Since ${\mathcal P}$ is analytic, it follows from the Implicit Function Theorem that the map
$\cP \to \cP$, $x \mapsto x^d$, is open, and therefore $\cP^{(d)}$ is an open
subgroup of $\cP$. (We note that as follows from the affirmative solution of the Restricted Burnside
Problem by E.I.~Zelmanov \cite{Zel1}, \cite{Zel2}, the subgroup $\cP^{(d)}$ is open in $\cP$ for
\emph{any finitely generated} profinite group $\cP$ and any integer $d \geqslant 1$, so the assumption of
analyticity here can be replaced by just requiring finite generation.)  Then, due to assumption (O),
the commutator subgroup $[{\mathcal P}^{(d)} , {\mathcal P}^{(d)}]$ is also open in ${\mathcal P}$.
On the other hand, part (1) asserts that $[{\mathcal P}^{(d)} ,
{\mathcal P}^{(d)}]$ is contained in the kernel of the projection
$\displaystyle {\mathcal P} \to \prod_{q \neq p}
\mathrm{GL}_n({\mathbb Z}_q)$, which therefore  is open in ${\mathcal P}$.
\end{proof}

\medskip

\begin{remark}\label{R:adelic}
Combining Lemma \ref{L:adelic1} with the results of \cite{JaK} (we thank M.\:Ershov for this reference), one shows that
for an $S$-arithmetic subgroup $\Gamma$ of an absolutely almost simple simply connected algebraic group $G$ over a global
field $K$ of characteristic $p > 0$, the profinite completion $\widehat{\Gamma}$ is not adelic provided that that $\mathrm{rk}_S\: G > 0$
(i.e. $\Gamma$ is infinite) and $S \neq V^K$. Indeed, pick $v \in V^K \setminus S$ and let $P = G(\mathcal{O}_v , \mathfrak{p}_v)$ be the
congruence subgroup modulo the valuation ideal $\mathfrak{p}_v$ of the valuation ring $\mathcal{O}_v \subset K_v$. We will view $P$ as a
pro-$p$ subgroup of $\overline{\Gamma}$, and let $\mathcal{P}$ be a Sylow pro-$p$ subgroup of $\pi^{-1}(P)$, so that $\pi(\mathcal{P}) = P$. Assume that
there exists an embedding $\widehat{\Gamma} \hookrightarrow \mathrm{GL}_n(\widehat{\Z})$. The according to Lemma \ref{L:adelic1}, for some $d \geqslant 1$,
the subgroup $\mathcal{P}_0 = [\mathcal{P}^{(d)} , \mathcal{P}^{(d)}]$ is contained in $\mathrm{GL}_n(\Z_p)$, hence is a $p$-adic analytic group. Then
$P_0 := \pi(\mathcal{P}_0)$ is also $p$-adic analytic. On the other hand, it follows from \cite[Theorem 1.7]{JaK} or from \cite{Ri} that $P_0$ is an open
subgroup of $P$, and therefore  cannot be analytic (see \cite[Theorem 1.5]{JaK} or \cite[Theorem 13.23]{Anal}). A contradiction.
\end{remark}

\medskip

To proceed with the proof of Theorem C, for a prime $p$ we let $V(p)$ denote the finite
set $\{ v \in V^K \setminus S \: \vert \: v(p) \neq 0 \}$, and let $\Pi$ be the finite set of primes $p$
for which $V(p) \cap \mathcal{A}(G) \neq \varnothing$. To prove Theorem C, it is enough to show that
$$
V_0 := \bigcup_{p \notin \Pi} V(p)
$$
is contained in $\mathfrak{Z} = \mathfrak{Z}(C^{(S)}(G))$. Indeed, since the complement $V^K \setminus (S \cup V_0)$
is finite, by Theorem~\ref{T:SL} the congruence kernel $C^{(S \cup V_0)}(G)$ is trivial. So, the inclusion $V_0 \subset \mathfrak{Z}$
would enable us to derive the centrality of $C^{(S)}(G)$ from Proposition \ref{P:Centr1}.

The rest of the argument focuses on proving the inclusion $V(p) \subset \mathfrak{Z}$ for a fixed $p \notin \Pi$.
By our assumption there exists a continuous embedding $\iota \colon \widehat{\Gamma} \hookrightarrow \prod_q \mathrm{GL}_n(\Z_q)$. Then
$$
\cP := \iota^{-1}(\mathrm{GL}_n(\Z_p , p))
$$
is an analytic pro-$p$ normal subgroup of $\widehat{\Gamma}$.
\begin{lemma}\label{L:adelic2}
$\pi(\cP)$ contains an open subgroup of $G_{V(p)} = \prod_{v \in V(p)} G(K_v)$.
\end{lemma}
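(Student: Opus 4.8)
The plan is to apply Lemma \ref{L:adelic1}(2) to the pro-$p$ group $\cP$, after first arranging the local structure at the places in $V(p)$. Recall that $\cP = \iota^{-1}(\mathrm{GL}_n(\Z_p , p))$ is an analytic pro-$p$ normal subgroup of $\widehat{\Gamma}$; the key point is that its image $\pi(\cP)$ in $\overline{\Gamma}$ is a pro-$p$ subgroup of $\prod_{v \notin S} G(\mathcal{O}_v)$. Since $v(p) = 0$ for all $v \in V^K \setminus (S \cup V(p))$, the group $G(\mathcal{O}_v)$ has no nontrivial pro-$p$ quotient for such $v$ (its pro-$p$ part is controlled by the residue characteristic, which is different from $p$), so in fact $\pi(\cP)$ projects trivially to $G(K_v)$ for $v \notin S \cup V(p)$, i.e. $\pi(\cP) \subset \prod_{v \in V(p)} G(\mathcal{O}_v)$ (more precisely into the product of the pro-$p$ Sylow subgroups there, modulo the finitely many bad primes, but all we need is containment in $G_{V(p)}$).

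First I would verify condition (O) from Lemma \ref{L:adelic1}(2) for $\cP$: for any open subgroup $\cP' \subset \cP$, the commutator $[\cP' , \cP']$ is open in $\cP$. This follows because $\cP$ is an analytic pro-$p$ group arising from an $S$-arithmetic group, and the relevant open subgroups are commensurable with congruence subgroups $G(\mathcal{O}_v , \mathfrak{p}_v^k)$ for $v \in V(p)$; for $G$ absolutely almost simple simply connected and $v$ a place where $G$ is isotropic (which holds since $p \notin \Pi$ means $V(p) \cap \mathcal{A} = \varnothing$), the group $G(\mathcal{O}_v)$ is a compact $p$-adic Lie group whose Lie algebra is perfect, so the commutator of any open subgroup is open — this is exactly the standard fact (cf. the Riehm/Jaikin–Zapirain results cited in Remark \ref{R:adelic}) that these congruence subgroups have the property that $[\cP',\cP']$ is open. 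Here one uses that $G$ is simply connected and absolutely almost simple so that its Lie algebra $\mathfrak{g}$ over $K_v$ satisfies $[\mathfrak{g},\mathfrak{g}] = \mathfrak{g}$, and this persists at finite level after a mild base-change.

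Granting (O), Lemma \ref{L:adelic1}(2) applied with the embedding $\iota$ gives that the kernel of the projection $\cP \to \prod_{q \neq p} \mathrm{GL}_n(\Z_q)$ is open in $\cP$. Composing, the kernel of $\cP \to \prod_{v \notin S,\, v(p) = 0} G(K_v)$ is open in $\cP$; but by the first paragraph $\cP$ already maps trivially outside $V(p)$, so this is vacuous — the real content is extracted differently. Let me restate the endgame: Lemma \ref{L:adelic1}(2) tells us that inside the \emph{$\widehat\Gamma$-side} the subgroup $\cP_0 := [\cP^{(d)},\cP^{(d)}]$ lands in a single $p$-adic factor, hence $\pi(\cP_0)$ is a $p$-adic analytic subgroup of $G_{V(p)}$; combining (O) with the openness of $x \mapsto x^d$ on the analytic group $\cP$, the subgroup $\cP_0$ is open in $\cP$. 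Therefore $\pi(\cP_0) \subset \pi(\cP)$ is open in $\pi(\cP)$, and it remains to see $\pi(\cP)$ — equivalently $\pi(\cP_0)$ — is open in $G_{V(p)}$. For this I would use strong approximation: $\overline{\Gamma}$ is open in $\overline{G} = G(\mathbb{A}(S))$, so its pro-$p$ part surjects onto (an open subgroup of) the pro-$p$ part of $\prod_{v \in V(p)} G(\mathcal{O}_v)$, which is itself open in $G_{V(p)}$ since each $G(\mathcal{O}_v)$ is open in $G(K_v)$. The main obstacle is the verification of condition (O): one must be careful about wild ramification and small residue characteristic at the places $v \in V(p)$, but since $p \notin \Pi$ excludes anisotropic places and the perfectness $[\mathfrak{g},\mathfrak{g}] = \mathfrak{g}$ holds for the simply connected type, the commutator of a sufficiently small congruence subgroup is open, which is all that (O) requires.
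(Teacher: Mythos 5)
Your proposal has a genuine gap at its center, and the details that are present don't support the plan.

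The heart of Lemma \ref{L:adelic2} is showing that $\pi(\cP)$ is \emph{large} inside $G_{V(p)}$. The group $\cP$ is defined as a preimage under $\iota$, while $\pi$ is a different map entirely, so there is no a priori reason for $\pi(\cP)$ to be big. The paper's key device for bridging $\iota$ and $\pi$ is the Sylow pro-$p$ argument: one takes a Sylow pro-$p$ subgroup $\mathcal{S}_p$ of $\widehat{\Gamma}$, uses the fact that $\pi(\mathcal{S}_p)$ is Sylow pro-$p$ in $\overline{\Gamma}$ and therefore contains $\prod_{v \in V(p)} G(\mathcal{O}_v , \mathfrak{p}_v)$, and then applies Lemma \ref{L:adelic1}\emph{(1)} --- not (2) --- to $\mathcal{S}_p$ to put $\bigl[ \mathcal{S}_p^{(d)} , \mathcal{S}_p^{(d)} \bigr]$ inside $\iota^{-1}(\mathrm{GL}_n(\Z_p))$, which is commensurable with $\cP$. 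The openness of the commutator $\prod_{v \in V(p)} \bigl[ G(\mathcal{O}_v , \mathfrak{p}_v)^{(d)} , G(\mathcal{O}_v , \mathfrak{p}_v)^{(d)} \bigr]$ in $G_{V(p)}$ then yields the claim. None of this Sylow argument appears in your writeup; the concluding appeal to ``strong approximation'' is a vague gesture that does not connect $\cP \subset \widehat{\Gamma}$ to anything concrete, and this is exactly the gap that cannot be waved away.

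Your plan to verify condition (O) for $\cP$ and then invoke Lemma \ref{L:adelic1}(2) has two independent problems. First, $\cP = \iota^{-1}(\mathrm{GL}_n(\Z_p , p))$ already projects trivially to $\prod_{q \neq p} \mathrm{GL}_n(\Z_q)$ by its very definition, so the conclusion of Lemma \ref{L:adelic1}(2) is vacuous for $\cP$ (and in fact you later attribute the landing of $[\cP^{(d)},\cP^{(d)}]$ in $\mathrm{GL}_n(\Z_p)$ to part (2), but that is the content of part (1), which is again vacuous for $\cP$ as it already sits inside $\mathrm{GL}_n(\Z_p , p)$). Second, your argument for (O) conflates $\cP \subset \widehat{\Gamma}$ with its image $\pi(\cP) \subset \overline{\Gamma}$: you describe the open subgroups of $\cP$ as ``commensurable with congruence subgroups $G(\mathcal{O}_v , \mathfrak{p}_v^k)$,'' but $\cP$ lives on the $\widehat{\Gamma}$-side, where it contains $\cP \cap C^{(S)}(G)$, a possibly huge piece of the congruence kernel with no controlled local Lie-algebra structure. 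The paper deliberately avoids needing (O) for $\cP$ here; (O) only enters later (in the proof of Lemma \ref{L:adelic3}), and then only after replacing $\mathscr{U} = \cP \cap \pi^{-1}(G_{V(p)})$ by a subgroup $\mathcal{W}$ whose Lie algebra is arranged to be its own derived algebra precisely because this perfectness cannot be assumed at the outset. Finally, the opening claim that $\pi(\cP)$ projects trivially to $G(K_v)$ for $v \notin S \cup V(p)$ is also incorrect: a pro-$p$ group can project to a nontrivial finite $p$-subgroup of $G(\mathcal{O}_v)$ (a Sylow $p$-subgroup of the finite group $G(\mathbb{F}_v)$) even when the residue characteristic is not $p$; fortunately this containment is never needed, as the lemma only asserts that $\pi(\cP)$ \emph{contains} an open subgroup of $G_{V(p)}$, not that it is contained therein.
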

\begin{proof}
Let $\mathcal{S}_p$ be a Sylow pro-$p$ subgroup of $\widehat{\Gamma}$ (cf., for example, \cite[Ch.\,I, \S 1.5]{Serre2}).  Then $\pi(\mathcal{S}_p)$ is a Sylow pro-$p$ subgroup
of $\overline{\Gamma} = \prod_{v \notin S} G(\mathcal{O}_v)$ ({\it loc.\,cit.,} Prop.\:4). Since for every $v \in V(p)$, the congruence subgroup $G(\mathcal{O}_v , \mathfrak{p}_v)$ modulo
the maximal ideal $\mathfrak{p}_v$ of $\mathcal{O}_v$ is a normal pro-$p$ subgroup of $G(\mathcal{O}_v)$, the conjugacy theorem for Sylow pro-$p$
subgroups of profinite groups ({\it loc.\,cit.}, Prop.~3) implies that
$$
\prod_{v \in V(p)} G(\mathcal{O}_v , \mathfrak{p}_v) \, \subset \, \pi(\mathcal{S}_p),
$$
and consequently
\begin{equation}\label{E:adelic7}
\prod_{v \in V(p)} \left[G(\mathcal{O}_v , \mathfrak{p}_v)^{(d)} \ , \ G(\mathcal{O}_v , \mathfrak{p}_v)^{(d)} \right] \, \subset \, \pi\left(\left[ \mathcal{S}_p^{(d)} \ , \
\mathcal{S}_p^{(d)} \right] \right)
\end{equation}
for any $d \geqslant 1$. We will use this for the integer $d$ given by Lemma \ref{L:adelic1}(1). Then $\iota\big(\big[ \mathcal{S}^{(d)}_p , \mathcal{S}^{(d)}_p \big]\big)$ is contained in $\mathrm{GL}_n(\Z_p)$. So, the intersection $\cP \cap \big[ \mathcal{S}^{(d)}_p , \mathcal{S}^{(d)}_p \big]$ is of finite index in $\big[ \mathcal{S}^{(d)}_p , \mathcal{S}^{(d)}_p \big]$, and therefore $\pi(\cP) \cap \pi\big(\big[ \mathcal{S}_p^{(d)} , \mathcal{S}_p^{(d)} \big]\big)$ is of finite index in $\pi\big(\big[ \mathcal{S}_p^{(d)} , \mathcal{S}_p^{(d)} \big]\big)$.
On the other hand, as in the proof of Lemma \ref{L:adelic1}(1), the map $G(K_v) \to G(K_v)$, $x \mapsto x^d$, is open, making $G(\mathcal{O}_v , \mathfrak{p}_v)^{(d)}$ an open
subgroup of $G(K_v)$. Furthermore, since $G$ is an absolutely almost simple group, its Lie algebra (as an analytic group over $\Q_p$) is semi-simple, which by way of the Implicit Function Theorem implies that the commutator subgroup of any open subgroup of $G(K_v)$ is again open (cf. \cite{Ri}). Combining these two facts, we see that the left-hand side of (\ref{E:adelic7}) in open in $G_{V(p)}$. Then $\pi(\cP)$ is also open, as required.
\end{proof}

\medskip

Since $\cP$ is an analytic pro-$p$ group, it follows from Cartan's theorem (cf. \cite[ch. III, \S8, n$^{\circ}$~2]{Bour}, \cite{Anal}) and the preceding  lemma that $\mathscr{U} := \cP
\cap \pi^{-1}(G_{V(p)})$ is also an analytic pro-$p$ normal subgroup of ${\widehat{\Gamma}}$ having the property that $\pi(\mathscr{U}) \subset G_{V(p)}$ is open. The latter means that
for the $\Q_p$-Lie algebras $\mathfrak{u}$ and $\mathfrak{g}$ of $\mathscr{U}$ and $G_{V(p)}$ respectively (as analytic pro-$p$ groups), and for the
differential of $\pi$ we have
\begin{equation}\label{E:adelic8}
d\pi(\mathfrak{u}) = \mathfrak{g}.
\end{equation}
The rest of the proof relies on the analysis of conjugates $g \mathscr{U} g^{-1}$ for $g \in \widehat{G}$. This analysis, however, is complicated by the fact
that $\mathscr{U}$ may not satisfy condition (O) of Lemma \ref{L:adelic1}(2). To bypass this difficulty, we first replace $\mathscr{U}$ with a smaller subgroup
that satisfies this condition and retains other significant properties of $\mathscr{U}$. More precisely, let
$$
\mathfrak{u}_0 = \mathfrak{u}, \ \ \ \mathfrak{u}_{i+1} = [\mathfrak{u}_i , \mathfrak{u}_i] \ \ \text{for} \ \ i \geqslant 0
$$
be the derived series of $\mathfrak{u}$. Pick $\ell \geqslant 0$ so that $\mathfrak{u}_{\ell + 1} = \mathfrak{u}_{\ell}$, set $\mathfrak{w} =
\mathfrak{u}_{\ell}$ and let $\mathcal{W} \subset \mathscr{U}$ be a closed subgroup with the Lie algebra $\mathfrak{w}$. Since by construction
$[\mathfrak{w} , \mathfrak{w}] = \mathfrak{w}$, the subgroup $\mathcal{W}$ satisfies (O).
At the same time, it follows from (\ref{E:adelic8})
and our construction that $d\pi(\mathfrak{w}) = \mathfrak{g}$, and therefore $\pi(\mathcal{W})$ is open in $G_{V(p)}$.
\begin{lemma}\label{L:adelic3}
For any $g \in \widehat{G}$, the subgroups $\mathcal{W}$ and $g\mathcal{W}g^{-1}$ are commensurable.
\end{lemma}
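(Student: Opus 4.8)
The plan is to push the commensurability down to the images under $\pi$ (where it is automatic), and then pull it back up using the $p$‑adic analytic realisation of $\cP$ provided by $\iota$, the point being that everything in sight can be forced, up to finite index, into $\mathscr{U}$, where a Lie‑algebra comparison finishes the job. First I would dispose of the images: since $\mathcal{W}$ is pro‑$p$, $\pi(\mathcal{W})$ is compact, and it is open in $G_{V(p)}$ (this is exactly the fact, noted just after the construction of $\mathcal{W}$, that $d\pi(\mathfrak{w})=\mathfrak{g}$); as $V(p)$ is a finite subset of $V^K\setminus S$, the subgroup $G_{V(p)}$ is a direct factor — in particular a normal subgroup — of $\overline{G}=G(\mathbb{A}(S))$, so for every $g\in\widehat{G}$ the image $\pi(g\mathcal{W}g^{-1})=\pi(g)\pi(\mathcal{W})\pi(g)^{-1}$ is again a compact open subgroup of $G_{V(p)}$, hence commensurable with $\pi(\mathcal{W})$ (any two compact open subgroups of a locally compact group are). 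Moreover $g\mathcal{W}g^{-1}\subset\pi^{-1}(G_{V(p)})$. Replacing $\mathcal{W}$ and $g\mathcal{W}g^{-1}$ by subgroups of finite index — which changes neither the hypotheses on $\mathcal{W}$ nor the conclusion to be proved — I may assume both lie in $\widehat{\Gamma}=\pi^{-1}(\overline{\Gamma})$ (for $\mathcal{W}$ this already holds, and $\widehat{\Gamma}$ is open in $\widehat{G}$). Now the key reduction: $g\mathcal{W}g^{-1}$, being topologically isomorphic to $\mathcal{W}$ via conjugation, inherits condition (O), and so does any open subgroup of it; applying Lemma~\ref{L:adelic1}(2) to the analytic pro‑$p$ subgroup $g\mathcal{W}g^{-1}\cap\widehat{\Gamma}\subset\mathrm{GL}_n(\widehat{\mathbb{Z}})$, the kernel of its projection to $\prod_{q\neq p}\mathrm{GL}_n(\mathbb{Z}_q)$ is open in it, and intersecting further with $\cP$ (which is open in that kernel) we may assume $g\mathcal{W}g^{-1}\subset\cP$, hence $g\mathcal{W}g^{-1}\subset\cP\cap\pi^{-1}(G_{V(p)})=\mathscr{U}$, just as $\mathcal{W}\subset\mathscr{U}$. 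Via the injection $\mathrm{pr}_p\circ\iota$ (whose image on $\cP$ is trivial at every $q\neq p$), the group $\mathscr{U}$ — and with it $\mathcal{W}$ and $g\mathcal{W}g^{-1}$ — is identified with a closed analytic subgroup of $\mathrm{GL}_n(\mathbb{Z}_p,p)\subset\mathrm{GL}_n(\mathbb{Q}_p)$.

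It then remains to match Lie algebras inside $\mathfrak{gl}_n(\mathbb{Q}_p)$. Put $\mathfrak{l}=\mathrm{Lie}(\mathcal{W})$ and $\mathfrak{l}'=\mathrm{Lie}(g\mathcal{W}g^{-1})$; both are subalgebras of $\mathfrak{u}=\mathrm{Lie}(\mathscr{U})$, and they have the same dimension since conjugation by $g$ is a topological isomorphism $\mathcal{W}\xrightarrow{\ \sim\ }g\mathcal{W}g^{-1}$. Both are perfect: $\mathfrak{l}=\mathfrak{w}$ is perfect by construction, and $\mathfrak{l}'$ is isomorphic to $\mathfrak{w}$, hence perfect. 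A perfect subalgebra $\mathfrak{p}$ satisfies $\mathfrak{p}=[\mathfrak{p},\mathfrak{p}]$ and therefore, iterating, lies in every term of the derived series; thus every perfect subalgebra of $\mathfrak{u}$ lies in its perfect core, which by construction is precisely $\mathfrak{w}=\mathfrak{l}$. Hence $\mathfrak{l}'\subset\mathfrak{l}$, and equality of dimensions gives $\mathfrak{l}'=\mathfrak{l}$. Consequently $\mathrm{Lie}(\mathcal{W}\cap g\mathcal{W}g^{-1})=\mathfrak{l}\cap\mathfrak{l}'=\mathfrak{l}$, so $\mathcal{W}\cap g\mathcal{W}g^{-1}$ is open, and therefore (being compact) of finite index, in both $\mathcal{W}$ and $g\mathcal{W}g^{-1}$; tracking back the finite‑index replacements of the first paragraph yields the assertion for the original subgroups.

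The step I expect to be the crux is the one where condition (O) is invoked: it is essential that $\mathcal{W}$ was engineered so that $[\mathfrak{w},\mathfrak{w}]=\mathfrak{w}$ (equivalently, (O) holds), for this is exactly what propagates to the conjugate $g\mathcal{W}g^{-1}$ and, through Lemma~\ref{L:adelic1}(2), pins down its behaviour away from the prime $p$ — without it the conjugate could in principle have infinite image in $\prod_{q\neq p}\mathrm{GL}_n(\mathbb{Z}_q)$ (an analytic pro‑$p$ group can embed there with infinite image), and the reduction to $\mathscr{U}$ would fail. Once that reduction is secured, the remaining ingredients — commensurability of compact open subgroups, openness of $\cP$ inside the relevant kernel, and the maximality of the perfect core among perfect subalgebras — are routine, and the only bookkeeping is to keep the several finite‑index passages consistent.
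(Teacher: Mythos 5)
Your proof is correct and follows essentially the same route as the paper's: reduce to the case where the conjugate lies in $\widehat{\Gamma}$, invoke Lemma~\ref{L:adelic1}(2) via condition (O) to push it into $\cP$ (hence $\mathscr{U}$), then compare Lie algebras and use that $\mathfrak{w}$ is the perfect core of $\mathfrak{u}$ to force equality. The opening remarks about commensurability of $\pi(\mathcal{W})$ and $\pi(g\mathcal{W}g^{-1})$ are not strictly needed, but otherwise the argument matches the paper's step for step.
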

\begin{proof}
First, note that both $\widehat{\Gamma}$ and $g \widehat{\Gamma} g^{-1}$ are open compact subgroups of $\widehat{G}$, hence are commensurable. It follows
that there exists an open subgroup  $\mathcal{W}' \subset \mathcal{W}$ such that $\widetilde{\mathcal{W}} := g\mathcal{W}' g^{-1} \subset \widehat{\Gamma}$. Since $\mathcal{W}$, hence also $\mathcal{W}'$,  satisfies condition (O), Lemma \ref{L:adelic1}(2) tells us that after replacing $\mathcal{W}'$ with a smaller open subgroup we may assume that $\iota(\widetilde{\mathcal{W}})$ is contained in $\mathrm{GL}_n(\Z_p)$ and even in $\mathrm{GL}_n(\Z_p , p)$, i.e. $\widetilde{\mathcal{W}} \subset \cP$. At the same time, since $G_{V(p)}$ is normal in $\overline{G}$, we see that $\pi(\widetilde{\mathcal{W}}) \subset G_{V(p)}$, and eventually
$\widetilde{\mathcal{W}} \subset \cP \cap \pi^{-1}(G_{V(p)}) =
\mathscr{U}$. The Lie algebra $\widetilde{\mathfrak{w}}$ is isomorphic to $\mathfrak{w}$, and hence is its own commutator. It follows that $\widetilde{\mathfrak{w}}$ is contained in the
$\ell$th term of the derived series $\mathfrak{u}_{\ell} = \mathfrak{w}$, and therefore $\widetilde{\mathfrak{w}} = \mathfrak{w}$. But since $\widetilde{\mathcal{W}}$
and $\mathcal{W}$ are both closed subgroups of the analytic pro-$p$ group $\mathscr{U}$, the fact that they have the same Lie algebras means that they share an open subgroup,
hence are commensurable, and our assertion follows.
\end{proof}

\medskip

For an arbitrary $g \in \widehat{G}$, the corresponding inner automorphism $\mathrm{Int}\: g$ induces a continuous group homomorphism
$g^{-1} \mathcal{W}g \to \mathcal{W}$ of analytic pro-$p$ groups, which is then analytic. It follows from Lemma \ref{L:adelic3} that both groups
have the same Lie algebra $\mathfrak{w}$, so we obtain an action of $g$ on the latter. Furthermore, using the fact that for any $g_1 , g_2 \in \widehat{G}$,
all four subgroups $\mathcal{W}$, $g^{-1}_1 \mathcal{W} g_1$, $g^{-1}_2 \mathcal{W} g_2$ and $(g_1g_2)^{-1} \mathcal{W} (g_1g_2)$ are pairwise commensurable,
it is easy to see that in fact we obtain a continuous representation $\rho \colon \widehat{G} \to \mathrm{GL}(\mathfrak{w})$.

\begin{lemma}\label{L:adelic4}
For $C = C^{(S)}(G)$, the image $\rho(C)$ is finite.
\end{lemma}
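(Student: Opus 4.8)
The plan is to exploit that $C$ is normal in $\widehat{G}$, so that $\rho(C)$ is a normal subgroup of $\rho(\widehat{G})$, and that conjugation by an element of $C$ is invisible after applying $\pi$. If $C$ is finite there is nothing to prove, so assume $C$ is infinite; then $C$ is non-central, and Proposition \ref{P:Prop-F} provides closed subgroups $C_2 \subset C_1$ of $C$, both normal in $\widehat{G}$, with $C/C_1$ finite and $C_1/C_2 \cong \prod_{i\in I}\Phi$ for an infinite set $I$ and a fixed finite simple group $\Phi$.

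The key computation concerns $\rho$ restricted to $C$. For $c \in C$ one has $\pi(c) = e$, so for every $w$ in a sufficiently small open subgroup of $\mathcal{W}$ --- small enough that $\mathrm{Int}\,c$ carries it into $\mathcal{W}$, which is legitimate by Lemma \ref{L:adelic3} --- we get $\pi(cwc^{-1}) = \pi(w)$; passing to Lie algebras this reads $d\pi \circ \rho(c) = d\pi$ on $\mathfrak{w}$. By \eqref{E:adelic8} the map $d\pi \colon \mathfrak{w}\to\mathfrak{g} = \mathrm{Lie}(G_{V(p)})$ is onto, with $\mathfrak{g}$ carrying the semisimple adjoint action of $\overline{G}$, so $\rho(C)$ acts trivially on the quotient $\mathfrak{w}/\mathfrak{k}$, where $\mathfrak{k} := \ker(d\pi|_{\mathfrak{w}})$ is precisely the Lie algebra of the closed subgroup $\mathcal{W}\cap C = \ker(\pi|_{\mathcal{W}})$. (Using the equivariance $d\pi\circ\rho(g) = \mathrm{Ad}(\pi(g))\circ d\pi$ one also checks that $\mathfrak{k}$ is a $\widehat{G}$-submodule of $\mathfrak{w}$, so $\rho(C)$ lands in the ``parabolic'' $\{A\in\mathrm{GL}(\mathfrak{w}) : A|_{\mathfrak{w}/\mathfrak{k}} = \mathrm{id}\}$, with unipotent part $\mathrm{Hom}(\mathfrak{w}/\mathfrak{k},\mathfrak{k})$ and Levi part acting on $\mathfrak{k}$ through the $C$-action on $\mathfrak{k}$.)

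Thus it suffices to bound $\rho(C)$, and the cleanest route is to show that $\mathcal{W}\cap C$ is finite: then $\mathfrak{k} = 0$, the map $d\pi|_{\mathfrak{w}}$ is an isomorphism, and $\rho(c) = \mathrm{id}$ for all $c\in C$, i.e.\ $\rho(C) = 1$ --- more than the lemma asks for. Now $\mathcal{W}\cap C$ is a closed subgroup of the analytic pro-$p$ group $\mathcal{W}$, hence is itself analytic pro-$p$ and in particular topologically finitely generated, so the whole matter reduces to showing that $C$ contains no infinite topologically finitely generated pro-$p$ subgroup --- and this, I expect, is where the real work lies. The idea is to run a hypothetical such subgroup $P$ through the filtration of Proposition \ref{P:Prop-F}: since $C/C_1$ is finite we may assume $P\subset C_1$, and then the image of $P$ in $C_1/C_2\cong\prod_{i\in I}\Phi$ is a finitely generated pro-$p$ group of bounded exponent (dividing the exponent of $\Phi$, or of a Sylow $p$-subgroup of $\Phi$ when $\Phi$ is non-abelian), hence finite by the positive solution of the Restricted Burnside Problem; so $P$ is pushed into $C_2$, and one must iterate this reasoning --- invoking the structure theory of the congruence kernel for $C_2$ and its successive analogues --- to reach a contradiction. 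The Lie-theoretic bookkeeping of the first two paragraphs is routine; the substantial input is the structural fact about $C$ forbidding it from containing an infinite analytic pro-$p$ subgroup.
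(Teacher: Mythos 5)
Your first two paragraphs of Lie-algebra bookkeeping are plausible but pursue a stronger conclusion ($\rho(C)=1$) than the lemma asks for, and the route you take to it does not close. The paper's own proof is much shorter and uses a different observation you missed: since $C$ is compact, $\rho(C)$ is a \emph{compact} subgroup of $\mathrm{GL}(\mathfrak{w})\cong\mathrm{GL}_m(\Q_p)$, hence is itself a $p$-adic analytic group and in particular topologically finitely generated; applying Proposition~\ref{P:Prop-F} to $F=C/(C\cap\ker\rho)\cong\rho(C)$ (note $C\cap\ker\rho$ is indeed a closed normal subgroup of $\widehat{G}$ contained in $C$, so the proposition applies) then gives at once that $\rho(C)$ is finite, since the proposition's last sentence says a finitely generated $F$ must be central, hence finite. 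No analysis of $\mathcal{W}\cap C$ or of $\mathfrak{k}=\ker(d\pi|_{\mathfrak w})$ is required.

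The genuine gap in your proposal is the step you yourself flag as ``where the real work lies'': that $C$ contains no infinite topologically finitely generated pro-$p$ subgroup. This claim cannot be established by the iteration you sketch, and is in fact false for congruence kernels in general. For instance, when $\Gamma=\mathrm{SL}_2(\Z)$ and $S=\{\infty\}$, Mel'nikov's theorem \cite{Me} shows that $C$ is a free profinite group of countably infinite rank, which certainly contains infinite (even procyclic) pro-$p$ subgroups. Under the adelic hypothesis of Theorem~C the conclusion is ultimately that $C$ is finite, so \emph{a posteriori} there is no such subgroup; but you cannot assume this while proving the lemma, and deriving it from the adelic hypothesis is essentially the whole of Theorem~C. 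The Restricted Burnside argument does push a finitely generated pro-$p$ subgroup $P$ from $C_1$ into $C_2$, but Proposition~\ref{P:Prop-F} says nothing further about $C_2$ as a group in its own right — it only describes quotients of $C$ by normal-in-$\widehat{G}$ subgroups — so the proposed ``iteration'' has no well-founded structure to descend along and does not terminate. The lemma's actual content is the finite generation of $\rho(C)$, and the right input is the structure theory of compact $p$-adic matrix groups, not a structure theorem for $C$ itself.
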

\begin{proof}
Since $C$ is compact, the image $\rho(C)$  is a compact subgroup of $\mathrm{GL}(\mathfrak{w}) = \mathrm{GL}_m(\Q_p)$ where
$m = \dim_{\Q_p} \mathfrak{w}$. Since $\mathrm{GL}_m(\Q_p)$ is a $p$-adic analytic group, we conclude that $\rho(C)$ is finitely generated (cf. \cite{Anal}). Now,
applying Proposition \ref{P:Prop-F} to $F = C/(C \cap \mathrm{Ker}\: \rho)$, we see that $\rho(C)$ is finite.
\end{proof}

\medskip

Let $C_0 := C \cap \mathrm{Ker}\: \rho$ is an open normal subgroup of $C$ normalized by $\widehat{G}$. Then the conjugation action of $C_0$ on $\mathcal{W}$
induces the trivial action on the Lie algebra $\mathfrak{w}$. This means that we can replace $\mathcal{W}$ with an open subgroup to ensure that $C_0$ centralizes $\mathcal{W}$ (we note that after this replacement, the image $\overline{\mathcal{W}} := \pi(\mathcal{W})$ will still be open in $G_{V(p)}$).
\begin{lemma}\label{L:adelic5}
There exists $g \in G_{V(p)}$ such that $\overline{\mathcal{W}}$ and $g\overline{\mathcal{W}}g^{-1}$ generate $G_{V(p)}$.
\end{lemma}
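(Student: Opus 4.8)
The plan is to exploit the fact that $\overline{\mathcal{W}}$ is an open subgroup of $G_{V(p)} = \prod_{v \in V(p)} G(K_v)$, and that each factor $G(K_v)$ is the group of $K_v$-points of an absolutely almost simple simply connected $K_v$-isotropic group (recall $v \in V(p)$ and $p \notin \Pi$, so $v \notin \mathcal{A}(G)$). First I would reduce to a single place: since $\overline{\mathcal{W}}$ is open in the product, it contains $\prod_{v \in V(p)} \overline{\mathcal{W}}_v$ where each $\overline{\mathcal{W}}_v$ is an open subgroup of $G(K_v)$, and it suffices to find, for each $v$ separately, an element $g_v \in G(K_v)$ such that $\overline{\mathcal{W}}_v$ and $g_v \overline{\mathcal{W}}_v g_v^{-1}$ generate $G(K_v)$ (then take $g = (g_v)$ and use that the projection of the group generated by $\overline{\mathcal{W}}$ and $g\overline{\mathcal{W}}g^{-1}$ to each factor is all of $G(K_v)$, while the only subgroup of a product of quasisimple-type groups surjecting onto each factor and containing an open subgroup of the product is the whole product — here one uses that $G(K_v)$ has no proper noncentral normal subgroups, as invoked repeatedly in \S\ref{S:Prelim}).

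For the single-factor statement, the key structural input is that an open subgroup $\overline{\mathcal{W}}_v$ of $G(K_v)$ contains, for some sufficiently small parahoric level, the pro-unipotent radical of an Iwahori subgroup together with enough of the root groups, and in particular it contains nontrivial elements of the root group $U_{\alpha}(K_v)$ for every root $\alpha$ in the relative root system (this uses $K_v$-isotropy, so the relative root system is nonempty, and the fact that an open subgroup meets every root subgroup nontrivially). Conjugating by a suitable element $g_v$ of a maximal $K_v$-split torus $S(K_v)$ with $\alpha(g_v)$ large, one spreads $\overline{\mathcal{W}}_v \cap U_{\alpha}(K_v)$ to a much larger — indeed, together with the original, a \emph{cofinal} — family of subgroups of $U_{\alpha}(K_v)$, so that $\overline{\mathcal{W}}_v$ and $g_v \overline{\mathcal{W}}_v g_v^{-1}$ together generate all of $U_{\alpha}(K_v)$ for every $\alpha$. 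Since the root groups $U_{\alpha}(K_v)$ for all relative roots $\alpha$ generate $G(K_v)$ (as $G$ is simply connected and $K_v$-isotropic — this is the standard fact that $G(K_v) = G(K_v)^{+}$ over a local field), this gives the claim. Alternatively, and perhaps more cleanly, one can argue that the subgroup $\langle \overline{\mathcal{W}}_v, g_v\overline{\mathcal{W}}_v g_v^{-1}\rangle$ is a subgroup of $G(K_v)$ containing an open subgroup, hence is open, hence its closure is open and closed; an open normal subgroup it is not a priori, but its normal closure is open, and since $G(K_v)$ has no proper open normal subgroup one then needs only to ensure the subgroup itself is normalized by enough elements — which the torus conjugation arranges.

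The main obstacle I expect is making precise the claim that an open subgroup of $G(K_v)$, after a single torus conjugation, generates the whole group: one must be careful that the relative root system has rank $\geq 1$ (guaranteed by $v \notin \mathcal{A}$) but could have rank exactly $1$, in which case there is essentially one pair $\{\alpha, -\alpha\}$ of (multipliable or not) root subgroups, and one must check that a single conjugating element simultaneously enlarges the piece of $U_{\alpha}$ and the piece of $U_{-\alpha}$ inside $\overline{\mathcal{W}}_v$ — conjugating by $g_v$ with $\alpha(g_v)$ of large valuation enlarges $U_\alpha$-part but \emph{shrinks} the $U_{-\alpha}$-part, so one instead observes that $\overline{\mathcal{W}}_v$ already contains a fixed open neighborhood of the identity in $U_{-\alpha}(K_v)$, and that a large piece of $U_{\alpha}(K_v)$ together with a fixed open piece of $U_{-\alpha}(K_v)$ already generate $G(K_v)$ (using the $\mathrm{SL}_2$ or $\mathrm{SU}_3$ rank-one subgroup attached to $\alpha$, inside which "large upper-triangular unipotent + small lower-triangular unipotent generate" is elementary, e.g. via the identity (\ref{E:Elem2}) used in Example 4.7). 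Packaging this uniformly across the possibly several places in $V(p)$ and across the two cases (rank one vs. higher rank) is the part that requires the most care; everything else is standard Bruhat–Tits and local structure theory.
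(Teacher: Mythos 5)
Your single-place argument takes a genuinely different (and substantially longer) route than the paper's, which goes as follows: there are only finitely many open compact subgroups $W_1, \ldots, W_r$ of $G(K_v)$ containing $W := \overline{\mathcal{W}}_v$ (\cite[Proposition~3.16]{PlRa}); picking a regular semi-simple $t \in W$, whose conjugacy class in $G(K_v)$ is closed and non-compact, one finds $g$ with $gtg^{-1} \notin \bigcup_{i=1}^r W_i$, whence $\langle W, gWg^{-1}\rangle$ is open and non-compact; by a theorem of Tits (proved by Prasad, see \cite{Pr-RT}), an open non-compact subgroup of $G(K_v)$ contains $G(K_v)^+$, and $G(K_v)^+ = G(K_v)$ here since $G$ is simply connected and $K_v$-isotropic. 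Your Bruhat--Tits root-group approach is, in effect, a from-scratch derivation of the weak consequence of the Tits--Prasad theorem that this argument uses, and the obstacle you flag is real: in rank one, conjugating by a torus element with $\alpha(g_v)$ of large valuation enlarges the $U_\alpha$-part of $W$ but shrinks the $U_{-\alpha}$-part, so one conjugating element cannot uniformly expand both. Your sketched fix (a fixed open piece of $U_{-\alpha}(K_v)$ together with a very large piece of $U_\alpha(K_v)$ already generate, via the rank-one $\mathrm{SL}_2$ or $\mathrm{SU}_3$ subgroup attached to $\alpha$ and an identity like (\ref{E:Elem2})) is plausible but left unfinished; both the quasi-split $\mathrm{SU}_3$ subcase and the reassembly over all $v \in V(p)$ would need detailed care, and the higher-rank case has a version of the same positive/negative tension even if it is easier to resolve there. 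In short, the paper trades explicit root-group manipulation for two cited facts --- the finiteness of open compact overgroups of $W$, and the Tits--Prasad theorem --- and that trade buys a short, rank-uniform proof that sidesteps the rank-one difficulty entirely. If you want to retain your more hands-on approach, the cleanest way to close the gap you identify is still to invoke the Tits--Prasad statement ``open and non-compact implies contains $G(K_v)^+$'': that statement is exactly the packaged form of the root-group generation you are trying to reconstruct, and proving it from scratch is essentially the content of \cite{Pr-RT}.
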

\begin{proof}
It is enough to show  that given $v \in V(P)$ and an open subgroup $W$ of $G(K_v)$, there exists $g \in G(K_v)$ such that
\begin{equation}\label{E:adelic100}
G(K_v) = \langle W , gWg^{-1} \rangle.
\end{equation}
Note that $G(K_v)$ has only finitely open compact subgroup $W_1, \ldots , W_r$ that contain $W$ (cf. \cite[Proposition 3.16]{PlRa}). Pick
a regular semi-simple element $t \in W$. It is well-known that the conjugacy class  of $t$ in $G(K_v)$ is closed and non-compact. So, one can
find $g \in G(K_v)$ such that
$$
gtg^{-1} \notin \bigcup_{i = 1}^r W_i.
$$
Then this $g$ is as required. Indeed, in this case the right-hand side of (\ref{E:adelic100}) is an open non-compact subgroup,
and therefore by a theorem due to Tits (see \cite{Pr-RT}) contains $G(K_v)^+$, the subgroup generated by the $K_v$-points of the $K_v$-defined
parabolics of $G$. But since $G$ is simply connected and $v \notin \mathcal{A}(G)$, we have $G(K_v)^+ = G(K_v)$ (see the discussion in the beginning
of \S\ref{S:ThD}), and (\ref{E:adelic100}) follows.
\end{proof}

Now, let $g \in G(K_v)$ be as in Lemma \ref{L:adelic5}, pick a lift $\widehat{g} \in \pi^{-1}(g)$, and set $C_1 = C_0 \cap \widehat{g} C_0 \widehat{g}^{-1}$. Clearly, $C_1$ is an open normal subgroup of $C$ that is centralized by $\mathcal{W}$ and $\widehat{g} \mathcal{W} \widehat{g}^{-1}$. So, if we let $\mathcal{G} = \pi^{-1}(G_{V(p)})$ and $\mathcal{Z} = Z_{\mathcal{G}}(C_1)$, then it follows from our construction that $\pi(\mathcal{Z}) = G_{V(p)}$. Let $\mathcal{Z}_1$ denote the kernel of the natural action of $\mathcal{Z}$ on the
finite group $C/C_1$. Since $G_{V(p)}$ does not have proper normal subgroups of finite index, we will still have $\pi(\mathcal{Z}_1) = G_{V(p)}$. Then as in Lemma \ref{L:A2}, for any $c \in C$, the map $x \mapsto [c , x]$ defines a continuous group homomorphism $\chi_c \colon \mathcal{Z}_1 \to C_1$, and we can consider
$$
\chi \colon \mathcal{Z}_1 \longrightarrow \mathcal{C} := \prod_{c \in C} X_c,  \ \ \text{where} \ \ X_c = C_1 \ \ \text{for all} \ \ c \in C,
$$
given by $\chi(x) = (\chi_c(x))$. It follows from Lemma \ref{L:A1} that for $\mathcal{H} := \mathrm{Ker}\: \chi$, we have $\pi(\mathcal{H}) = G_{V(p)}$. At the same time,
by our construction, $\mathcal{H} \subset Z_{\widehat{G}}(C)$, which implies that $V(p) \subset \mathfrak{Z}$, as required. This completes the proof of
Theorem C.

\section{Generators for the congruence kernel: Proof of Theorem D}\label{S:ThD}

In this section we will assume that $\mathrm{char}\: K = 0$, and let $G$ be an absolutely almost
simple simply connected $K$-{\it isotropic} algebraic group. If a subset $S \subset V^K$ containing $V_{\infty}^K$
is such that $\mathrm{rk}_S\: G \geqslant 2$ then according to the results of Raghunathan \cite{Ra1}, \cite{Ra2}
that in particular prove Serre's conjecture in the isotropic situation, the congruence kernel $C^{(S)}(G)$ is central,
hence is isomorphic to the metaplectic kernel $M(S , G)$, which in all cases is a finite cyclic group (often trivial). In the remaining
case where $\mathrm{rk}_S\: G = 1$ (and therefore necessarily $\mathrm{rk}_K\: G = 1$ and $\vert S \vert = 1$, hence $K$ is either $\Q$ or
an imaginary quadratic field), according to Serre's conjecture
$C^{(S)}(G)$ is expected to be infinite, which has been established in a number of cases although we do not yet have a general result.
The goal of this section is to provide several convenient systems of generators (or rather almost generators)
for $C^{(S)}(G)$ \emph{as a normal subgroup} of $\widehat{G}^{(S)}$ and
eventually reduce one of them to a \emph{single} element, proving thereby Theorem D (we recall that according to Proposition \ref{P:Prop-F}, if $C^{(S)}(G)$
is infinite, it cannot be finitely generated as a~group). So, henceforth we will assume that $\mathrm{rk}_K\: G = 1$.

First, we need to fix some notations that will be kept throughout this section. Let $T$ be a maximal $K$-split torus of $G$ (so, $\dim T = 1$), and $M =
Z_G(T)$. The root system $\Phi = \Phi(G , T)$ is either $\{ \pm \alpha \}$ or $\{ \pm \alpha, \pm 2\alpha \}$. For $\beta \in \Phi$, we let $U_{\beta}$ denote
the corresponding unipotent $K$-subgroup of $G$ (cf. \cite[21.9]{Bo}, \cite[\S5]{BT}, \cite[15.4]{Spr}); recall that $U_{\pm 2\alpha} \subset U_{\pm \alpha}$ if $2\alpha \in \Phi$; if $2\alpha\notin \Phi$,
then $U_{\pm 2\alpha}$ will denote the trivial subgroup of $U_{\alpha}$. The subgroups  $P_{\pm \alpha} =
M \cdot U_{\pm \alpha}$ (semi-direct product) are opposite minimal parabolic $K$-subgroups with the unipotent radicals $U_{\alpha}$ and $U_{-\alpha}$
respectively and the common Levi subgroup $M = P_{\alpha} \cap P_{-\alpha}$. Following Tits \cite{T-simple}, for a field extension $F/K$ we let $G(F)^+$
denote the subgroup of $G(F)$ generated by the $F$-rational points of the unipotent radicals of parabolic $F$-subgroups (since $\mathrm{char}\: K = 0$,
it is simply the subgroup generated by all unipotent elements of $G(F)$). It is known \cite[6.2(v)]{BoT-Hom} that $G(F)^+$ is generated by $U_{\alpha}(F)$
and $U_{-\alpha}(F)$. On the other hand, from the affirmative solution of the Kneser-Tits problem over local  (see \cite{Pl-SA}, \cite{Pr-Rag},  \cite[\S 7.2]{PlRa}) and
global (see \cite{Gille}) fields, one knows that $G(K)^+ = G(K)$ and  $G(K_v)^+ = G(K_v)$ for any $v \in V^K$. Thus,
$U_{\alpha}(K)$ and $U_{-\alpha}(K)$  generate $G(K)$ and $U_{\alpha}(K_v)$ and $U_{-\alpha}(K_v)$ generate $G(K_v)$ for any $v$.

\medskip

We will now produce the first generating system for $C = C^{(S)}(G)$ as a normal subgroup of $\widehat{G}^{(S)}$ by generalizing the construction used in Examples 4.6 and 4.7.
Since $\mathrm{char}\: K = 0$, the topologies $\tau_a$ and $\tau_c$ of $G(K)$ induce the same topology on $U_{\pm \alpha}(K)$ (cf.\,\cite[Prop.\,2.1]{Ra3}). It follows that $\pi^{(S)}$ induces isomorphisms
$$
\widehat{U_{\pm \alpha}(K)} \longrightarrow \overline{U_{\pm \alpha}(K)} = U_{\pm \alpha}(\mathbb{A}(S)),
$$
and we let $\sigma_{\pm \alpha} \colon \overline{U(K)} \to \widehat{U_{\pm \alpha}(K)}$ denote the inverse isomorphisms. Consider the set
$$
X = \bigcup X(v_1 , v_2), \ \ \ with \ \ X(v_1 , v_2) :=  \left[\sigma_{\alpha}(U_{\alpha}(K_{v_1})) \: , \: \sigma_{-\alpha}(U_{-\alpha}(K_{v_2}))\right],
$$
where the union is taken over all $v_1 , v_2 \in V^K \setminus S$, $v_1 \neq v_2$, and $[A , B]$ denotes the set of all commutators $[a , b]$ with
$a \in A$, $b \in B$. The fact that  the groups $G(K_{v_1})$ and $G(K_{v_2})$ for such $v_1, v_2$ commute elementwise inside $\overline{G}^{(S)} = G(\mathbb{A}(S))$ immediately
implies that $X(v_1 , v_2) \subset C$, hence $X \subset C$. Now, let $D$ be the closed normal subgroup of $\widehat{G}^{(S)}$ generated by $X$ and consider the corresponding extension (\ref{E:F}) of \S 2. For $v \in V^K \setminus S$, we let $H_v$ denote the image in $H = \widehat{G}^{(S)}/D$ of the subgroup $\mathcal{G}_v \subset \widehat{G}^{(S)}$ generated by $\sigma_{\alpha}(U_{\alpha}(K_v))$ and $\sigma_{-\alpha}(U_{-\alpha}(K_v))$. As we mentioned above, $G(K_v) = \left\langle U_{\alpha}(K_v) , U_{-\alpha}(K_v) \right\rangle$, which implies that $\theta(H_v) = G(K_v)$. Furthermore, by our construction the subgroups $H_{v_1}$ and $H_{v_2}$ commute elementwise in $H$. Finally, the closed subgroup of $\widehat{G}^{(S)}$ generated by the
$\mathcal{G}_v$'s for $v \in V^K \setminus S$ will contain $\widehat{U_{\alpha}(K)}$ and $\widehat{U_{-\alpha}(K)}$, hence $G(K) = \left\langle U_{\alpha}(K) , U_{-\alpha}(K)
\right\rangle$, hence coincides with $\widehat{G}^{(S)}$. Now, applying Proposition \ref{P:Com-lifts2} to the partition of $V^K \setminus S$ into singletons and the subgroups $H_v$ constructed above, we obtain that (\ref{E:F}) is a central extension. Thus, $F = C/D$ is a quotient of the metaplectic kernel $M(S , G)$, hence it is  a finite cyclic group of order
dividing the order $|\mu_K|$ of the group $\mu_K$ of roots of unity in $K$ (cf.\,\cite{PR1}).

\medskip

Next, we will show that by using the result of Raghunathan \cite{Ra2}, that this system can be substantially reduced.
\begin{prop}\label{P:Gen-CK1}
Fix $v_0 \in V^K \setminus S,$ and set
$$
Y(v_0) = \bigcup_{v \in V^K \setminus (S \cup \{ v_0 \})} X(v_0 , v).
$$
Then $Y(v_0) \subset C$, and if $D$ is the closed normal subgroup of $\widehat{G}^{(S)}$ generated
by $Y(v_0)$, then $C/D$ is a quotient of $M(S , G)$, hence it is a finite cyclic group of order dividing $\vert \mu_K \vert$.
\end{prop}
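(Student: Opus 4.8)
The plan is to reduce to the already-established centrality result for the bigger generating set $X$ by showing that the normal subgroup $D$ generated by the much smaller set $Y(v_0)$ in fact already "captures" all the commutators $[{\cG}_{v_1},{\cG}_{v_2}]$ for arbitrary $v_1\neq v_2$ in $V^K\setminus S$. The point is that we are allowed to use the theorem of Raghunathan \cite{Ra2}: since $\mathrm{rk}_K\:G=1$ but $v_0\notin S$ is a place at which $G$ is isotropic, the set $S\cup\{v_0\}$ satisfies $\mathrm{rk}_{S\cup\{v_0\}}\:G\geqslant 2$, so the congruence kernel $C^{(S\cup\{v_0\})}(G)$ is central, hence a quotient of the (finite) metaplectic kernel $M(S\cup\{v_0\},G)$. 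Passing to the quotient extension (\ref{E:F}) attached to $D$, we get
$$
1\to F=C/D\longrightarrow H=\widehat{G}^{(S)}/D\stackrel{\theta}{\longrightarrow}G(\A(S))\to 1,
$$
and as before each $\theta(H_v)=G(K_v)$ where $H_v$ is the image of ${\cG}_v$; by construction $H_{v_0}$ commutes elementwise with every $H_v$, $v\neq v_0$. The task is to promote this to: every pair $H_{v_1},H_{v_2}$ with $v_1\neq v_2$ (neither necessarily equal to $v_0$) commutes in $H$, after which Proposition \ref{P:Com-lifts2} applied to the partition of $V^K\setminus S$ into singletons yields that (\ref{E:F}) is central, giving $C/D$ a quotient of $M(S,G)$ as claimed.

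First I would set up the quotient of (\ref{E:F}) corresponding to $H_{v_0}$. Since $H_{v_0}$ commutes with all $H_v$ ($v\neq v_0$) and these generate $H$ together with $H_{v_0}$ itself, the subgroup $H_{v_0}\cap \theta^{-1}(\text{something})$ is normal; more precisely, form $H' = H_{v_0}$ and observe $\theta(H')=G(K_{v_0})$ is normalized, so $H_{v_0}$ is normal in $H$ and we may pass to $H/H_{v_0}$. This kills the factor $G(K_{v_0})$ from $G(\A(S))$, leaving the extension
$$
1\to F/(F\cap H_{v_0})\longrightarrow H/H_{v_0}\longrightarrow G(\A(S\cup\{v_0\}))\to 1,
$$
which inherits a splitting over $G(K)$ with dense image. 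Now invoke Raghunathan's theorem via Proposition \ref{P:Univ}: because $C^{(S\cup\{v_0\})}(G)$ is central, and here finite, we conclude $F/(F\cap H_{v_0})$ is a quotient of $M(S\cup\{v_0\},G)$ — in particular it is abelian, and more importantly the commutator map on the image of $H$ lands in $F\cap H_{v_0}$. Concretely this says: for $v_1,v_2\neq v_0$, the commutator $[\widetilde{x},\widetilde{y}]$ of lifts of $x\in G(K_{v_1})$, $y\in G(K_{v_2})$ lies in $F\cap H_{v_0}$. Combining this with the fact that $F\cap H_{v_0}$ is centralized by every $H_v$, $v\neq v_0$ (since $H_{v_0}$ is), together with $H_{v_0}$ itself centralizing $F$ after passing further to the sub-extension where $F$ is central — this is the delicate bookkeeping step — we arrange that all $[H_{v_1},H_{v_2}]$ in fact lie in a subgroup that is already central in a suitable quotient, so they vanish there.

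The cleanest route is probably to argue as in the proof of Proposition \ref{P:Com-lifts2}/Proposition \ref{P:v-in-Z2}: take $V=\{$all $v\neq v_0\}$ so that $T=\{v_0\}\cup S$ and $C^{(T\setminus\mathcal{A})}(G)=C^{(S\cup\{v_0\})}(G)$ is trivial once we quotient by a copy of the metaplectic contribution — one uses that the latter is finite to reduce to a genuinely trivial congruence kernel after replacing $F$ by $F/F'$ for a suitable finite-index $F'$, then notes $D$ is generated by enough commutators to realize the needed subgroup $H_1'$. Applying Proposition \ref{P:v-in-Z2} gives $V\setminus\mathcal{A}=V\subset\mathfrak{Z}(F)$, i.e.\ $H_{v_0}$ — no, more precisely the "second factor" subgroup $H_2$ — centralizes $F$; running the symmetric argument with $v_0$ playing the role of the $V$-side, or simply invoking Proposition \ref{P:Centr1} once $\mathfrak{Z}(F)$ is seen to be all of $V^K\setminus(S\cup\mathcal{A})$, finishes the centrality. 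The main obstacle I anticipate is exactly this matching of the metaplectic-kernel "defect" with the normal subgroup $D$: one must check that the finitely-many extra central commutators not manifestly in $D$ nonetheless do not obstruct centrality, which forces one to work with the quotient $F/F'$ by a finite subgroup and verify Proposition \ref{P:Univ} applies there — a soft but fiddly step. Once centrality of (\ref{E:F}) is in hand, the final sentence is immediate: $F=C/D$ is a quotient of $M(S,G)$, which is a finite cyclic group of order dividing $|\mu_K|$ by \cite{PR1}. $\hfill\Box$
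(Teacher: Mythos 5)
Your overall strategy is close to the paper's, but there are two genuine gaps that prevent your argument from closing.

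First, you never establish that $D$ contains the ``reverse'' commutators $\left[\sigma_{-\alpha}(U_{-\alpha}(K_{v_0}))\,,\,\sigma_{\alpha}(U_{\alpha}(K_{v}))\right]$. The set $Y(v_0)$ only gives you $X(v_0,v)=\left[\sigma_{\alpha}(U_{\alpha}(K_{v_0}))\,,\,\sigma_{-\alpha}(U_{-\alpha}(K_{v}))\right]$, and the group $H_{v_0}$ (resp.\ $H_v$) is generated by \emph{both} the positive and negative root subgroups at $v_0$ (resp.\ $v$). So even to show that $H_{v_0}$ and $H_v$ commute elementwise in $H=\widehat{G}^{(S)}/D$ you need all four types of commutators to vanish; the two ``same-sign'' ones die because $K_{v_0}\cdot K_v=0$ in $\mathbb{A}(S)$, and $X(v_0,v)\subset D$ kills one of the two ``cross'' ones, but nothing in your argument kills the other. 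The paper closes this by using a representative $\omega\in G(K)$ of the nontrivial Weyl group element with $\omega U_{\pm\alpha}\omega^{-1}=U_{\mp\alpha}$; since $D$ is \emph{normal} in $\widehat{G}^{(S)}$, conjugating $X(v_0,v)$ by the canonical lift of $\omega$ places $\left[\sigma_{-\alpha}(U_{-\alpha}(K_{v_0}))\,,\,\sigma_{\alpha}(U_{\alpha}(K_{v}))\right]$ inside $D$ as well. Without this step you simply cannot get the commuting-lifts hypothesis you want.

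Second, the ``delicate bookkeeping'' you anticipate — having to pass to a finite-index quotient of $F$ because $C^{(S\cup\{v_0\})}(G)$ is only known to be central/finite rather than trivial — is a red herring: the metaplectic kernel $M(S\cup\{v_0\},G)$ is actually \emph{trivial} by the computations in \cite{PR1} (the set $S\cup\{v_0\}$ contains a nonarchimedean place at which $G$ is isotropic, and $\mathrm{rk}_{S\cup\{v_0\}}\,G\geqslant 2$), so $C^{(S\cup\{v_0\})}(G)$ is genuinely trivial and Proposition \ref{P:Univ} applies without any adjustment. This also means you do not need to aim for the full singleton partition and Proposition \ref{P:Com-lifts2}; the paper simply takes the two-block partition $V^K\setminus S = \{v_0\}\cup\bigl(V^K\setminus(S\cup\{v_0\})\bigr)$, notes that $C^{(V^K\setminus\{v_0\})}(G)$ is trivial by Theorem \ref{T:SL} and $C^{(S\cup\{v_0\})}(G)$ is trivial by Raghunathan plus $M(S\cup\{v_0\},G)=1$, and then invokes Proposition \ref{P:Com-lifts1} with $H_1$ generated by the root subgroups at $v_0$ and $H_2$ by those away from $v_0$. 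Adding the Weyl-element observation to make $H_1$ and $H_2$ genuinely commute, and replacing the ambitious singleton scheme with this two-block one, would repair your argument.
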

\begin{proof}
The discussion above yields the inclusion $Y(v_0) \subset C$ and also shows that it is enough  to prove that the corresponding sequence (\ref{E:F})
is a central extension. Since there exists $\omega \in G(K)$ such that $\omega U_{\pm \alpha} \omega^{-1} = U_{\mp \alpha}$ (cf. \cite[21.2]{Bo}, \cite[5.3]{BT}), the group $D$
contains $[\sigma_{-\alpha}(U(K_{v_0})) , \sigma_{\alpha}(U(K_v))]$ for any $v \in V^K \setminus (S \cup \{ v_0 \})$. We will now use Proposition
\ref{P:Com-lifts1} to establish the centrality. Write  $V^K \setminus S = V_1 \cup V_2$ where $V_1 = \{ v_0 \}$ and $V_2 = V^K \setminus (S \cup \{ v_0 \})$
(obviously, $\mathcal{A} = \varnothing$ in our situation); then $V'_i = V_{3-i}$. The congruence kernel $C^{(S \cup V'_1)}(G) = C^{(V^K \setminus \{ v_0 \})}(G)$
is trivial by Theorem \ref{T:SL}, and the congruence kernel $C^{(S \cup V'_2)}(G) = C^{(S \cup \{v_0\})}(G)$ is central by \cite{Ra2}, hence is isomorphic to $M(S \cup
\{ v_0 \} , G)$, which is trivial \cite{PR1}, and thus is trivial as well.  Let $H_1$ be the closed subgroup of $H$ generated by the images of
$\sigma_{\alpha}(U_{\alpha}(K_{v_0}))$ and $\sigma_{-\alpha}(U_{-\alpha}(K_{v_0}))$, and let $H_2$ be the closed subgroup generated by the images of $\sigma_{\alpha}(U_{\alpha}(K_v))$ and $\sigma_{-\alpha}(U_{-\alpha}(K_v))$ for $v \in V^K \setminus (S \cup \{ v_0 \})$ (or, equivalently, by the images of
$\sigma_{\alpha}(U_{\alpha}(\mathbb{A}(S \cup \{ v_0 \})))$ and $\sigma_{-\alpha}(U_{-\alpha}(\mathbb{A}(S \cup \{ v_0 \})))$). Then $\theta(H_i) = G(\mathbb{A}(S
\cup V'_i))$ for $i = 1, 2$, and moreover, $H_1$ and $H_2$ commute elementwise and together generate a dense subgroup of $H$. In other words, $H_1$ and $H_2$ satisfy
the assumptions of Proposition \ref{P:Com-lifts1}, and then the required centrality of (\ref{E:F}) immediately follows from this proposition.
\end{proof}

\medskip

We will first establish Theorem D for $G = \mathrm{SL}_2$ where the (almost) generating element $c$ can be written down explicitly.
The argument here is inspired by the proof of Proposition \ref{P:Gen-CK1}, but relies
only on the result of Example 4.7 (which is originally due to Serre \cite{Serre1}) rather than on the general result of Raghunathan \cite{Ra2}.
We will keep the notations introduced in Example 4.7. Fix $v_0 \in V^K \setminus S$, write $\mathbb{A}(S) = K_{v_0} \times \mathbb{A}(S \cup \{ v_0 \})$ and
consider the elements
$$
1_{v_0} \in  K_{v_0} \ \ \text{and} \ \ 1'_{v_0} = (1, \ldots, 1, \ldots) \in \mathbb{A}(S \cup \{ v_0 \}).
$$
\begin{prop}\label{P:SL2-gen}
Set $c(v_0) = [\widehat{u}^+(1_{v_0}) \, , \widehat{u}^-(1'_{v_0})] \in C$, and let $D$ be the closed normal subgroup of $\widehat{G}$ generated by $c(v_0)$. Then
the quotient $C/D$ is central in $\widehat{G}/D$, hence it is a finite cyclic group of order dividing $\vert \mu_K \vert$.
\end{prop}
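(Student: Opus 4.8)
The plan is to show that the extension (\ref{E:F}) attached to $F=C/D$ and $H=\widehat{G}^{(S)}/D$ is central; once this is known, $F$ is a quotient of the metaplectic kernel $M(S,G)$ exactly as in the discussion preceding Proposition \ref{P:Gen-CK1}, hence a finite cyclic group of order dividing $|\mu_K|$ (cf.\,\cite{PR1}). To prove centrality I would invoke Proposition \ref{P:Com-lifts1} for the partition $V^K\setminus S=V_1\cup V_2$ with $V_1=\{v_0\}$ and $V_2=V^K\setminus(S\cup\{v_0\})$ (so $\mathcal{A}=\varnothing$, $V_1'=V_2$, $V_2'=V_1$). Condition $(i)$ holds: $C^{(S\cup V_1')}(G)=C^{(V^K\setminus\{v_0\})}(G)$ is trivial by Theorem \ref{T:SL} (the semi-local case, $v_0$ being nonarchimedean), while $C^{(S\cup V_2')}(G)=C^{(S\cup\{v_0\})}(G)$ is central by Example 4.7 --- which applies because $|S\cup\{v_0\}|\geqslant 2$, so $\mathcal{O}(S\cup\{v_0\})^{\times}$ is infinite --- and hence trivial since $M(S\cup\{v_0\},G)=1$ (\cite{PR1}). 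This is precisely the place where the argument uses only Serre's result from Example 4.7 (cf.\,\cite{Serre1}) rather than Raghunathan's theorem \cite{Ra2} as in Proposition \ref{P:Gen-CK1}.

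It then remains to construct the subgroups $H_1,H_2\subset H$ required by Proposition \ref{P:Com-lifts1}. I would take $H_1$ to be the image in $H$ of the closed subgroup of $\widehat{G}$ generated by $\widehat{u}^+(K_{v_0})$ and $\widehat{u}^-(K_{v_0})$ (identifying $K_{v_0}$ with the corresponding factor of $\mathbb{A}(S)=K_{v_0}\times\mathbb{A}(S\cup\{v_0\})$), and $H_2$ the image of the closed subgroup generated by $\widehat{u}^+(\mathbb{A}(S\cup\{v_0\}))$ and $\widehat{u}^-(\mathbb{A}(S\cup\{v_0\}))$. Since $G(K_{v_0})=\langle u^+(K_{v_0}),u^-(K_{v_0})\rangle$ and $G(\mathbb{A}(S\cup\{v_0\}))$ is topologically generated by $u^{\pm}(\mathbb{A}(S\cup\{v_0\}))$ (Kneser--Tits over $K_{v_0}$, plus strong approximation), and $\theta$ is closed, one gets $\theta(H_1)=G(K_{v_0})=G(\mathbb{A}(V^K\setminus V_1))$ and $\theta(H_2)=G(\mathbb{A}(V^K\setminus V_2))$, which is condition $(ii)$; and since the $\widehat{u}^{\pm}(K_v)$ for $v\notin S$ together topologically generate $\widehat{G}$ (they contain $\widehat{u}^{\pm}(K)$, hence $G(K)$, by density of $K$ in $\mathbb{A}(S)$), condition $(iv)$ is automatic. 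So everything reduces to condition $(iii)$: that $H_1$ and $H_2$ commute elementwise.

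For condition $(iii)$, since two subgroups commute elementwise as soon as their generators commute pairwise, and since $\widehat{u}^+$ and $\widehat{u}^-$ are homomorphisms from the abelian group $\mathbb{A}(S)$ (so the $(+,+)$ and $(-,-)$ commutators already vanish in $\widehat{G}$), it suffices to show $[\widehat{u}^+(a),\widehat{u}^-(b)]\in D$ for all $a\in K_{v_0}$ and $b\in\mathbb{A}(S\cup\{v_0\})$ --- note each such commutator already lies in $C$, because $u^+(a)$ and $u^-(b)$ have disjoint support, hence commute in $\overline{G}$ --- and then, using the element $\omega\in G(K)$ with $\mathrm{Int}(\omega)\colon U^+\leftrightarrow U^-$ together with the normality of $D$ in $\widehat{G}$, the $(-,+)$ commutators follow as well. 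To establish this containment I would start from the single relation $c(v_0)=[\widehat{u}^+(1_{v_0}),\widehat{u}^-(1'_{v_0})]\in D$ and spread it out: conjugating by the lifts of the $K$-split torus elements $h(t)=\mathrm{diag}(t,t^{-1})\in G(K)$ (which normalize $D$), by elements $\widehat{u}^{\pm}$ of adeles, and by the lift of $\omega$, and using the commutator identity (\ref{E:Elem2}) in $\overline{G}$ lifted to $\widehat{G}$ up to a $C$-valued error, together with continuity of $(a,b)\mapsto[\widehat{u}^+(a),\widehat{u}^-(b)]$ and the density of $K$ in $\mathbb{A}(S)$ and of $K^{\times}$ in $K_{v_0}^{\times}$ (weak approximation). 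This is the same mechanism used in Example 4.7 to pass from one commuting pair to all of them and in Proposition \ref{P:Gen-CK1} to reduce $Y(v_0)$ to its ``diagonal''.

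The main obstacle is exactly this last containment $[\widehat{u}^+(K_{v_0}),\widehat{u}^-(\mathbb{A}(S\cup\{v_0\}))]\subset D$: whereas Example 4.7 has an infinite-order unit available to manufacture many base relations, here one is handed a single commutator and must generate the whole family modulo $D$ purely by conjugation, the group law, and limits. Two points require care. First, $C$ need not be central in $\widehat{G}$, so $b\mapsto[\widehat{u}^+(a),\widehat{u}^-(b)]\bmod D$ is only a twisted homomorphism and one must check that the conjugation errors stay inside $D$ (this is where one exploits that conjugation by any element of $\widehat{G}$ preserves $D$, so the relevant discrepancies, being commutators in $C$, can be absorbed using partial multiplicativity). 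Second, the natural spreading automorphism $\sigma_t=\mathrm{Int}(\mathrm{diag}(t,1))$ is not inner in $G(K)$, so one should either verify directly that $\sigma_t$ preserves $D$ or avoid it altogether, conjugating only by $G(K)$-points and by $\widehat{G}$-elements and recovering the remaining parameters through the identity (\ref{E:Elem2}) and density. With the cross-commutators in $D$ in hand, Proposition \ref{P:Com-lifts1} yields centrality of (\ref{E:F}), completing the proof; as noted, an essentially equivalent route is to show that $D$ contains the closed normal subgroup generated by $Y(v_0)$ and then quote Proposition \ref{P:Gen-CK1} with Example 4.7 in place of \cite{Ra2}.
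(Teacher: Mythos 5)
Your overall architecture is the same as the paper's: partition $V^K\setminus S$ into $\{v_0\}$ and its complement, feed this into Proposition~\ref{P:Com-lifts1}, observe that the requisite triviality of $C^{(S\cup\{v_0\})}(G)$ for $G=\mathrm{SL}_2$ follows already from Example~4.7 (Serre) together with $M(S\cup\{v_0\},G)=1$, so that Raghunathan's theorem \cite{Ra2} is not needed, and reduce everything to showing that the cross-commutators $[\widehat u^+(K_{v_0}),\widehat u^-(\mathbb{A}(S\cup\{v_0\}))]$ lie in $D$. Both of the worries you raise along the way are in fact genuine, and the paper's argument is tailored precisely to sidestep them: since $C$ need not be central, the commutator map $(a,b)\mapsto[\widehat u^+(a),\widehat u^-(b)]$ is not bimultiplicative, so the paper uses only the crude identities $[xy,z]=(x[y,z]x^{-1})[x,z]$ and $[x,yz]=[x,y](y[x,z]y^{-1})$ together with normality of $D$ in $\widehat G$; and since $\sigma_t=\mathrm{Int}(\mathrm{diag}(t,1))$ is not inner, the paper conjugates only by the genuine $K$-points $h(t)=\mathrm{diag}(t,t^{-1})$.

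The gap is in the spreading step, and it is a real one. Conjugating $c(v_0)$ by $h(t)$, $t\in K^\times$, gives $[\widehat u^+(t^{-2}),\widehat u^-(t^2)]\in D$, and then strong approximation (density of $\{(t^{-1},t):t\in K^\times\}$ in $\mathbb{A}(S)=K_{v_0}\times\mathbb{A}(S\cup\{v_0\})$) together with continuity of the commutator map and closedness of $D$ upgrades this to $[\widehat u^+(a),\widehat u^-(b)]\in D$ for all $a\in K_{v_0}^2$ and $b\in\mathbb{A}(S\cup\{v_0\})^2$ --- but this set of parameters consists only of \emph{squares}. Your sketch (conjugation by $h(t)$, by elements of $\widehat u^\pm(\mathbb{A}(S))$, by $\omega$, the identity~(\ref{E:Elem2}) ``up to a $C$-valued error'', density) gives no mechanism to pass from squares to arbitrary elements of $K_{v_0}$ and $K_v$: precisely because $\sigma_t$ is unavailable, the only torus action at your disposal rescales the two parameters by $t^{-2}$ and $t^2$ respectively, and~(\ref{E:Elem2}) relates a commutator to torus and unipotent elements but does not produce new commutators inside $D$. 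The missing ingredient is the arithmetic fact that every element of a nonarchimedean completion $K_w$ is a sum of at most four squares; combined with the two elementary commutator identities above and normality of $D$, this gives $[\widehat u^+(a),\widehat u^-(b)]\in D$ for \emph{all} $a\in K_{v_0}$, $b\in K_v$, i.e.\ $X(v_0,v)\subset D$ for every $v\in V^K\setminus(S\cup\{v_0\})$, hence $Y(v_0)\subset D$, at which point Proposition~\ref{P:Gen-CK1} (with Example~4.7 supplying the centrality of $C^{(S\cup\{v_0\})}(G)$) finishes the proof. Without the four-squares observation the argument stalls, so as written the proposal cannot be completed.
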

\begin{proof}
First, we observe that the set
$$
\Delta = \{ (t^{-1} , t) \in K_{v_0} \times \A(S \cup \{ v_0 \}) \
\vert \ t \in K^{\times} \}
$$
is dense in $\A(S).$ Indeed, any open set in $\A(S)$ contains an
open set of the form $U^{-1}_{v_0} \times U'_{v_0}$ for some open
sets $U_{v_0} \subset K_{v_0}^{\times}$ and $U'_{v_0} \subset \A(S
\cup \{ v_0 \})$, and then our claim immediately follows from the density
of $K$ in $\mathbb{A}(S)$ (strong approximation with respect to $S$).
Since $$h(t)^{-1}c(v_0)h(t) =
[\widehat{u}^+(t^{-2}) , \widehat{u}^-(t^2)],$$ we obtain that $D$ contains
the set
$$
\{[\widehat{u}^+(a) , \widehat{u}^-(b)] \ \vert \ a \in K^2_{v_0}, \ b \in
\A(S \cup \{ v_0 \})^2 \}.
$$
In particular, for any $v \in V^K \setminus (S \cup \{v_0\})$, all commutators
$$
[\widehat{u}^+(a) , \widehat{u}^-(b)] \ \ \text{with} \ \ a \in K^2_{v_0}, \ b \in K^2_{v}
$$
lie in $D$. Since for any $w \in V^K_f$, every element of $K_w$ can be written as a sum of (at most four)
squares, the identities
$$
[xy , z] = (x[y , z]x^{-1})[x , z] \ \ \ \text{and} \ \ \ [x , yz] =
[x , y](y[x , z]y^{-1})
$$
imply that $D$ in fact contains the set $X(v_0 , v)$. Then $D$ contains $Y(v_0)$,
and our claim follows from Proposition \ref{P:Gen-CK1}.
\end{proof}

\begin{remark}\label{R:order}
As we already observed, if the group $G$ is $K$-isotropic then $\mathrm{rk}_S G = 1$ is possible only if $K = \Q$
or $K = \Q(\sqrt{-d})$, $d$ square-free $>0$, with $S$ consisting of the unique archimedean place in both cases. If $K = \Q$ then
$M(S , G)$ for any $G$ is of order $\leqslant 2$, and in fact $M(S , G)$ is trivial for $G = \mathrm{SL}_2$. The latter means that the congruence
kernel for $\mathrm{SL}_2(\Z)$ is generated as a normal subgroup of $\widehat{G}$ by the element $c(p)$ constructed above for any prime $p$.
On the other hand, for $K = \Q(\sqrt{-d})$, the order of $M(S , G)$, hence also that of $C/D$, divides 2 (resp., 4 and 6) if $d \neq 1, 3$ (resp.,
$d = 1$ and $d = 3$).
\end{remark}

\medskip

It is worth mentioning that the construction of generators described in Proposition \ref{P:SL2-gen} has some other applications. Let $G_0 = \mathrm{SL}_2$
over $\Q$, and let $S_0 = V_{\infty}^{\Q}$ so that $\Gamma_0 = G(\mathcal{O}_{\Q}(S_0))$ is $\mathrm{SL}_2(\Z)$. Furthermore, fix a square-free integer $d > 0$, let
$G_d = \mathrm{SL}_2$ over $K_d := \Q(\sqrt{-d})$ and $S_d = V_{\infty}^{K_d}$ so that $\Gamma_d = G(\mathcal{O}_{K_d}(S_d))$ is the Bianchi group $\mathrm{SL}_2(\mathcal{O}_d)$
where $\mathcal{O}_d$ is the ring of integers in $K_d$. Let $C_0 = C^{(S_0)}(G_0)$ and $C_d = C^{(S_d)}(G_d)$ be the corresponding congruence kernels. Then the natural
embedding $\Gamma_0 \to \Gamma_d$ induces a continuous homomorphism $\iota_d \colon C_0 \to C_d$. It follows from the results of \cite{ALR} that $\iota_d$ is injective for all $d$. (Indeed, by \cite[Theorem 8.1]{ALR}, the homomorphism of the profinite completions $\widehat{\mathrm{PSL}_2(\Z)} \to \widehat{\mathrm{PSL}_2(\mathcal{O}_d)}$ is injective, which
implies that the homomorphism $\widehat{\Gamma_0} \to \widehat{\Gamma_d}$ is injective, and the injectivity of $\iota_d$ follows. On the other hand, the results of Serre \cite{Serre1} imply that for $d \neq 1, 3$, the homomorphism $\iota_d$ is \emph{not} surjective. Moreover, we have the following.
\begin{lemma}\label{L:NSurj}
Let $E_d$ be the closed normal subgroup of \,$\widehat{\Gamma_d}$ generated by $\iota_d(C_0)$. Then for $d \neq 1, 3$, the quotient $C_d/E_d$ is infinite.
\end{lemma}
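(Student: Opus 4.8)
The plan is to describe $\widehat{\Gamma_d}/E_d$ as an inverse limit of finite quotients of $\Gamma_d$ and to show that the part of this inverse limit coming from the congruence kernel is infinite. First one records the easy inclusion $E_d\subseteq C_d$: since $\iota_d$ respects the congruence topologies it carries $C_0$ into $C_d$, which is normal in $\widehat{\Gamma_d}$, so the normal closure of $\iota_d(C_0)$ stays inside $C_d$. Next, for a finite quotient $\phi\colon\widehat{\Gamma_d}\twoheadrightarrow Q$, one has $E_d\subseteq\ker\phi$ if and only if $\iota_d(C_0)\subseteq\ker\phi$, i.e. if and only if the composite $\widehat{\Gamma_0}\xrightarrow{\iota_d}\widehat{\Gamma_d}\xrightarrow{\phi}Q$ kills $C_0$, i.e. if and only if the induced finite quotient of $\Gamma_0=\mathrm{SL}_2(\Z)$ is a congruence quotient of $\Gamma_0$. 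Thus $\widehat{\Gamma_d}/E_d=\varprojlim Q$, the limit taken over finite quotients $Q$ of $\Gamma_d$ whose restriction to $\Gamma_0$ is congruence, and correspondingly $C_d/E_d=\varprojlim(\text{non-congruence part of }Q)$; so it suffices to produce such $Q$ with non-congruence part of unbounded order.

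To build these $Q$ I would use Serre's analysis of the congruence subgroup problem for $\mathrm{SL}_2$ over $K_d$ \cite{Serre1}, which shows that $C^{(S_d)}(G_d)$ is infinite by exhibiting, at arbitrarily high level, finite quotients of $\Gamma_d$ that do not factor through congruence quotients, the non-congruence being supported at the finite places. The extra requirement here is that these quotients restrict to \emph{congruence} quotients of the sub-diagonally embedded $\mathrm{SL}_2(\Z)$; this is where the hypothesis $d\neq1,3$, i.e. $\mathcal{O}_d^{\times}=\{\pm1\}$, enters, since it is exactly the case in which, by Serre's theorem, $\iota_d$ already fails to be surjective, so that the relevant non-congruence directions are invisible from $\Z$. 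Concretely, one expects to locate these directions at the rational primes $\ell$ that split in $K_d$, say $\ell\mathcal{O}_d=\mathfrak l\bar{\mathfrak l}$: modulo $\mathfrak l\bar{\mathfrak l}$ the group $\mathrm{SL}_2(\Z)$ lands diagonally in $\mathrm{SL}_2(\mathcal{O}_d/\mathfrak l)\times\mathrm{SL}_2(\mathcal{O}_d/\bar{\mathfrak l})$, so a finite quotient of $\Gamma_d$ that twists the two factors against each other is non-congruence for $\Gamma_d$ but congruence on $\Gamma_0$; and since there are infinitely many split $\ell$, such quotients assemble into a family whose non-congruence part has unbounded order, yielding an infinite continuous quotient of $C_d/E_d$.

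The step I expect to be the main difficulty is verifying that the normal closure in $\widehat{\Gamma_d}$ of $\iota_d(C_0)$ does not already contain these non-congruence directions — equivalently, that conjugation by $\overline{\Gamma_d}$, which acts place by place and, when $\mathcal{O}_d^{\times}=\{\pm1\}$, respects the diagonal structure at each rational prime, cannot recover from $\iota_d(C_0)$ the commutators distinguishing $\mathfrak l$ from $\bar{\mathfrak l}$; this is exactly the point at which Serre's split-prime non-congruence subgroups must be chosen ``$\Gamma_0$-transparently'' and the hypothesis $d\neq 1,3$ is genuinely used (for $d=1,3$ the extra units of $\mathcal{O}_d$ reduce nontrivially and non-uniformly at these primes and produce precisely the mixing that makes $\iota_d$ surjective). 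It is worth noting that one cannot circumvent this by invoking the dichotomy behind Proposition \ref{P:Prop-F} via non-centrality, because $E_d$ is a priori normal only in $\widehat{\Gamma_d}$, and replacing it by its normal closure in $\widehat{G_d}$ is fatal: that larger subgroup already contains the single generator of $C_d$ modulo $M(S_d,G_d)$ provided by Theorem D at an inert place, hence has finite index in $C_d$.
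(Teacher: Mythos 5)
The crucial device in the paper's proof is missing from your proposal, and the device sidesteps exactly the difficulty you flag at the end. The paper passes to the abelianization $\widehat{\Gamma_d}^{\mathrm{ab}}=\widehat{\Gamma_d}/\widehat{[\Gamma_d,\Gamma_d]}$ and sets $\overline{C_d}:=C_d/(C_d\cap\widehat{[\Gamma_d,\Gamma_d]})$. In that quotient, conjugation by $\widehat{\Gamma_d}$ is trivial, so the image of $E_d$ coincides with the image of $\iota_d(C_0)$ --- the normal closure is invisible there. That image is finite because $\Gamma_0^{\mathrm{ab}}=\mathrm{SL}_2(\Z)^{\mathrm{ab}}$ is finite (of order $12$), so $C_0\cap\widehat{[\Gamma_0,\Gamma_0]}$ has finite index in $C_0$. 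Meanwhile Serre's result in \cite[\S 3.6]{Serre1} is precisely that $\Gamma_d^{\mathrm{ab}}$ is \emph{infinite} for $d\neq1,3$; combined with the exact sequence $\overline{C_d}\to\widehat{\Gamma_d}^{\mathrm{ab}}\to\overline{\Gamma_d}/\overline{[\Gamma_d,\Gamma_d]}$ and the finiteness of the last term (cf.\ \cite{Ra1}), this forces $\overline{C_d}$ to be infinite. A finite subgroup (the image of $E_d$) of an infinite group has infinite index, hence $C_d/E_d$ is infinite.

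Your proposal, by contrast, tries to build explicit finite quotients of $\Gamma_d$ supported at split primes whose ``non-congruence part'' is congruence when restricted to $\Gamma_0$, and you note yourself that the hard step --- showing the normal closure of $\iota_d(C_0)$ does not already recover those directions --- is unresolved. That hard step does not actually need to be confronted once one abelianizes, because in the abelianization the normal closure contributes nothing beyond $\iota_d(C_0)$ itself. Two further points: (a) Serre's argument in \cite[\S 3.6]{Serre1} is an abelianization/Betti-number argument, not a construction of non-congruence subgroups at split primes, so the ``split-prime non-congruence subgroups'' you attribute to Serre do not appear there; and (b) your invocation of $d\neq1,3$ via ``$\iota_d$ fails to be surjective'' is circular for the present lemma (non-surjectivity is strictly weaker than what is being proved), whereas the correct use of $d\neq1,3$ is the (in)finiteness of $\Gamma_d^{\mathrm{ab}}$ (the footnote in the paper notes that for $d=1,3$ the abelianization is finite by \cite{Co} and \cite{Sw}, which is consistent with the lemma failing there). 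Your closing remark about why one cannot bootstrap from Proposition \ref{P:Prop-F} or Theorem D is reasonable, but it is a detour; the abelianization reduction makes the whole issue moot.
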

\begin{proof}
Since the image of $E_d$ in $\overline{C_d} := C_d / (C_d \cap \widehat{[\Gamma_d , \Gamma_d]})$ is the same as that of $C_0$, it is enough to show that the latter
has infinite index. It is well-known that the abelianization $\Gamma_0^{\small \mathrm{ab}} = \Gamma_0 / [\Gamma_0 , \Gamma_0]$ is finite (of order 12), so $C_0 \cap \widehat{[\Gamma_0 , \Gamma_0]}$ has finite index in $C_0$, making the image of $C_0$ in $\overline{C_d}$ finite. On the other hand, according to the results in \cite[\S 3.6]{Serre1}, for $d \neq 1, 3$, the abelianization $\Gamma_d^{\small \mathrm{ab}}$ is infinite\footnote{We note that for $d = 1, 3$, the abelianization $\Gamma_d^{\small
\mathrm{ab}}$ is finite as one can see from the explicit presentations found in \cite{Co} and \cite{Sw}.}.
Then from the exact sequence  $$\overline{C_d} \to \widehat{\Gamma_d}/\widehat{[\Gamma_d , \Gamma_d]} \to \overline{\Gamma_d} /\overline{[\Gamma_d , \Gamma_d]}$$ and the
finiteness of the last term in it (see \cite{Ra1}), we conclude that $\overline{C_d}$ is infinite, and our assertion follows.
\end{proof}

Nevertheless, we have the following in all cases.
\begin{prop}\label{P:Bian}
Let $L_d$ be the closed normal subgroup of $\widehat{G_d}$ generated by $\iota_d(C_0)$. Then $C_d/L_d$ is a finite cyclic group of order
dividing $\vert \mu_{K_d} \vert$ (so, its order is $\leqslant 2$ if $d \neq 1 , 3$, divides $4$ if $d = 1$, and $6$ if $d = 3$.)
\end{prop}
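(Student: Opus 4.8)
The plan is to reduce the assertion to Proposition~\ref{P:SL2-gen} applied over $K_d$, by showing that $L_d$ already contains the closed normal subgroup of $\widehat{G_d}$ generated by a single (almost) generator $c(w_0)$ attached to a suitable place $w_0$ of $K_d$. I would choose a rational prime $p$ that does \emph{not} split in $K_d$ --- for instance any prime dividing the discriminant of $K_d/\Q$, which exists because $K_d\neq\Q$, or any inert prime --- and let $w_0$ be the unique place of $K_d$ above $p$; then $w_0\in V^{K_d}\setminus S_d$ and $\mathcal{A}(G_d)=\varnothing$, so Proposition~\ref{P:SL2-gen} applies to $G_d$, $S_d$ and $v_0=w_0$: if $D_{w_0}$ denotes the closed normal subgroup of $\widehat{G_d}$ generated by $c(w_0)=[\widehat{u}^+(1_{w_0}),\widehat{u}^-(1'_{w_0})]$, then $C_d/D_{w_0}$ is a finite cyclic group of order dividing $\vert\mu_{K_d}\vert$. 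Over $\Q$ we likewise have the element $c(p)=[\widehat{u}^+(1_p),\widehat{u}^-(1'_p)]\in C_0$ attached to $S_0$ and $v_0=p$. It therefore suffices to prove the identity $\iota_d(c(p))=c(w_0)$: granting this, $c(w_0)\in\iota_d(C_0)\subseteq L_d$, hence $D_{w_0}\subseteq L_d$, and $C_d/L_d$ is a quotient of $C_d/D_{w_0}$.

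To prove $\iota_d(c(p))=c(w_0)$ I would first observe that $\iota_d\colon C_0\to C_d$ is the restriction to congruence kernels of the homomorphism $\widehat{\iota}_d\colon\widehat{G_0}\to\widehat{G_d}$ induced by the inclusion $\Q\hookrightarrow K_d$ (the inclusion $\mathrm{SL}_2(\Q)\hookrightarrow\mathrm{SL}_2(K_d)$ is $\tau_a$-continuous since every finite-index subgroup of $\mathrm{SL}_2(\mathcal{O}_d)$ meets $\mathrm{SL}_2(\Z)$ in a finite-index subgroup, and on congruence completions $\widehat{\iota}_d$ becomes the map $\mathrm{SL}_2(\mathbb{A}(S_0))\to\mathrm{SL}_2(\mathbb{A}(S_d))$ induced by the natural base-change homomorphism $\rho_d\colon\mathbb{A}(S_0)\to\mathbb{A}(S_d)$). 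Since $\pi$ restricts to an isomorphism $\widehat{U}^{\pm}\to\overline{U}^{\pm}$ both over $\Q$ and over $K_d$, and $\widehat{\iota}_d$ is compatible with $\pi$ and with $\rho_d$, one obtains $\widehat{\iota}_d(\widehat{u}^{\pm}(a))=\widehat{u}^{\pm}(\rho_d(a))$ for every $a\in\mathbb{A}(S_0)$, where on the right-hand side $\widehat{u}^{\pm}$ denotes the $K_d$-parametrization. Finally, because $p$ is non-split, $\rho_d$ carries the element $1_p\in\mathbb{A}(S_0)$ (equal to $1$ at $p$ and $0$ at all other finite places) to $1_{w_0}$, and carries $1'_p$ (equal to $1$ at every finite place $\neq p$ and $0$ at $p$) to $1'_{w_0}$; applying the previous formula to the commutator gives $\iota_d(c(p))=[\widehat{u}^+(1_{w_0}),\widehat{u}^-(1'_{w_0})]=c(w_0)$, as wanted.

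Putting the two steps together, $C_d/L_d$ is a quotient of the finite cyclic group $C_d/D_{w_0}$, hence is itself a finite cyclic group of order dividing $\vert\mu_{K_d}\vert$; since $\vert\mu_{K_d}\vert$ equals $4$ for $d=1$, equals $6$ for $d=3$, and equals $2$ for all other $d$, this gives the numerical bounds in the statement. The only genuinely delicate point is the bookkeeping in the second step --- checking that $\iota_d$ is the restriction of the base-change map $\widehat{\iota}_d$ and computing $\rho_d$ on the specific elements $1_p$ and $1'_p$ --- and this is exactly where the choice of a non-split prime $p$ is essential: for a prime that splits in $K_d$ the image element would be supported at the two places above $p$, and would no longer be of the form $c(w_0)$ for a single place, so the clean comparison with Proposition~\ref{P:SL2-gen} would break down.
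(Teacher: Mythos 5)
Your proposal is correct and follows essentially the same route as the paper: choose a non-split rational prime $p$, let $w_0$ be the unique place of $K_d$ above it, check that $\iota_d(c(p)) = c(w_0)$, and then invoke Proposition \ref{P:SL2-gen} over $K_d$. The only difference is that you spell out the verification of $\iota_d(c(p)) = c(w_0)$ (via the base-change map $\rho_d$ on adeles and compatibility with $\pi$), which the paper merely asserts as ``easy to see.''
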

\begin{proof}
Pick a prime $p_0$ that does not split in $K_d$, and let $v_0$ the unique valuation of $K_d$ extending the $p_0$-adic valuation
of $\Q$. Consider the elements from Proposition \ref{P:SL2-gen} written for these valuations:
$$
c(p_0) = [\widehat{u}^+(1_{p_0}) \, , \, \widehat{u}^-(1'_{p_0})] \in C(G_0) \ \ \text{and} \ \ c(v_0) = [\widehat{u}^+(1_{v_0}) \, \ ,
\widehat{u}^-(1'_{v_0})] \in C(G_d).
$$
It is easy to see that these elements are related by $\iota_d(c(p_0)) = c(v_0)$. So, $L_d$ contains the subgroup $D$ from the statement of
Proposition \ref{P:SL2-gen}, and our claim follows from that proposition (cf.\,also Remark \ref{R:order}).
\end{proof}

\medskip

The proof of Theorem D in the general case will be reduced to the $\mathrm{SL}_2$-case
by constructing a suitable $K$-homomorphism $\mathrm{SL}_2 \to G$ with the help of
Jacobson-Morozov Lemma and then applying Proposition \ref{P:SL2-gen} in conjunction with the
following statement.
\begin{prop}\label{P:reduction}
Let $G$ be an absolutely simple simply connected algebraic $K$-group
of $K$-rank one, let $\varphi \colon H \to G$ be a $K$-homomorphism of an absolutely simple simply
connected  $K$-group $H$ to $G$,
and let $\widehat{\varphi} \colon \widehat{H}^{(S)} \to \widehat{G}^{(S)}$
be the corresponding continuous homomorphism of $S$-arithmetic completions.
Assume that $\varphi(H) \cap (U_{\beta} \setminus U_{2\beta}) \neq \varnothing$
for $\beta = \alpha$ and $-\alpha$. Let $C_0$ be a subgroup of  $C^{(S)}(H)$
normalized by $\widehat{H}^{(S)}$ and such that $\widehat{H}^{(S)}$ acts on $C^{(S)}(G)/C_0$ trivially. Then
for the closed normal subgroup $D$ of $\widehat{G}^{(S)}$ generated by $\widehat{\varphi}(C_0)$, the group
$\widehat{G}^{(S)}$ acts on $C^{(S)}(G)/D$ trivially. Consequently, $C^{(S)}(G)/D$ is a quotient of the metaplectic
kernel $M(S , G)$, hence it is a finite cyclic group of order dividing $\vert \mu_K \vert$.
\end{prop}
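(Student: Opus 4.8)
The plan is to transfer the ``commuting lifts'' structure from $\widehat H^{(S)}$ to $\widehat G^{(S)}$ via $\widehat\varphi$ and then invoke Proposition \ref{P:Com-lifts2} (or, more precisely, the machinery of \S\ref{S:Prelim}) applied to the quotient extension \eqref{E:F} of $\widehat G^{(S)}$ by $D$. First I would set $H_{\mathrm{ext}} = \widehat G^{(S)}/D$ and $F = C^{(S)}(G)/D$, and record the standing consequence of the hypotheses on $\varphi$: the condition $\varphi(H)\cap(U_{\pm\alpha}\setminus U_{\pm 2\alpha})\neq\varnothing$ means that $\varphi$ carries a maximal $K$-split torus and the two opposite root groups of $H$ (for a suitable $K$-rank-one $K$-subgroup of $H$, extracted via Jacobson--Morozov as announced before the statement) into nontrivial elements of $U_\alpha(K)$ and $U_{-\alpha}(K)$. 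Consequently, along the lines of the discussion preceding Proposition \ref{P:Gen-CK1}, the subgroup $\widehat\varphi(\widehat H^{(S)})$ together with $G(K)$ generates a dense subgroup of $\widehat G^{(S)}$; in particular the images $\widehat\varphi(\sigma^H_{\pm\alpha}(U_{\pm\alpha}(K_v)))$, for $v\notin S$, map under $\theta$ onto enough of $G(K_v)$ that the closure of the group they generate surjects onto $\overline G^{(S)}$.

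The key step is the following. Since $\widehat H^{(S)}$ acts trivially on $C^{(S)}(H)/C_0$, the discussion already carried out for $\mathrm{SL}_2$-type configurations shows that the local subgroups $\mathcal G^H_v\subset\widehat H^{(S)}$ (generated by $\sigma^H_\alpha(U_\alpha(K_v))$ and $\sigma^H_{-\alpha}(U_{-\alpha}(K_v))$), after passing mod $C_0$, have images $H^H_v$ that commute elementwise for distinct $v$, satisfy $\theta^H(H^H_v)=H(K_v)$, and together with $H(K)$ generate densely. Transporting by $\widehat\varphi$ and reducing mod $D$, I obtain subgroups $\mathcal G_v := \widehat\varphi(\mathcal G^H_v)\cdot(\text{the standard lift of the }U_{\pm\alpha}(K_v)\text{ in }\widehat G^{(S)})$ whose images $H_v$ in $H_{\mathrm{ext}}=\widehat G^{(S)}/D$ again commute elementwise for distinct $v$ (because $\widehat\varphi$ is a homomorphism and $D$ was chosen to contain $\widehat\varphi(C_0)$, killing the obstruction), satisfy $\theta(H_v)=G(K_v)$, and together generate a dense subgroup of $H_{\mathrm{ext}}$. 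This is exactly the hypothesis of Proposition \ref{P:Com-lifts2} applied to the partition of $V^K\setminus(S\cup\mathcal A)=V^K\setminus S$ into singletons (here $\mathcal A=\varnothing$ since $G$ is $K$-isotropic, hence $K_v$-isotropic for all $v$, wait --- more carefully, $\mathcal A$ may be nonempty, in which case one throws the anisotropic places into $H_{\mathcal A}$ exactly as in the proof of Theorem A). We therefore conclude that \eqref{E:F} is a central extension, i.e. $\widehat G^{(S)}$ acts trivially on $F=C^{(S)}(G)/D$.

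Once centrality of \eqref{E:F} is established, the final assertion is automatic from the theory recalled in \S\ref{S:Intro}: a central quotient $C^{(S)}(G)/D$ of the congruence kernel is a quotient of the metaplectic kernel $M(S,G)$, which by \cite{PR1} (specifically \cite[Theorem 2.7]{PR1} and \cite[3.4--3.6]{PR-Milnor}) is a finite cyclic group whose order divides $|\mu_K|$.

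I expect the main obstacle to be the verification that the $\mathcal G_v$ defined through $\widehat\varphi$ genuinely have commuting images modulo $D$: a priori $[\widehat\varphi(a),\widehat\varphi(b)]=\widehat\varphi([a,b])$ lies in $\widehat\varphi(C^{(S)}(H))$, not obviously in $\widehat\varphi(C_0)$, so one must use that the relevant commutators $[a,b]$ already lie in $C_0$ --- and this is precisely where the triviality of the $\widehat H^{(S)}$-action on $C^{(S)}(H)/C_0$, combined with the bimultiplicativity pairing of Corollary \ref{C:A1}, is needed. Making this bookkeeping precise (choosing the lifts $\sigma^H_{\pm\alpha}$ compatibly with $\sigma_{\pm\alpha}$ on the $G$-side via $\varphi$, and checking that the density statement $(iii)$ survives passage to the quotient) is the technical heart; everything else is an application of results already proved in \S\S\ref{S:Prelim}--\ref{S:Appl}. $\hfill\Box$
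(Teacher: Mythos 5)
Your proposal correctly identifies the framework (quotient extension by $D$, local lifts $\mathcal{G}_v$ generated by root-group lifts, reduction to Proposition~\ref{P:Com-lifts2}) and correctly names where the hypothesis on $C_0$ is used, but there is a genuine gap at the crucial step, and it is the step that the paper devotes the bulk of the proof and the entire Appendix (Proposition~\ref{P:Irred}) to.

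The hypothesis gives you commuting lifts only over the image of $\varphi$. Concretely, what you can extract from the triviality of the $\widehat H^{(S)}$-action on $C^{(S)}(H)/C_0$ (via the perfectness of $H(K_{v_i})$ and the bimultiplicativity from Corollary~\ref{C:A1}) is that \emph{specific} elements $\varphi(u_1)\in U_\alpha(K_{v_1})$ and $\varphi(u_2)\in U_{-\alpha}(K_{v_2})$ have commuting lifts in $\widecheck G=\widehat G^{(S)}/D$. But to apply Proposition~\ref{P:Com-lifts2} you need the \emph{full} root-group lifts $\sigma_\alpha(U_\alpha(K_{v_1}))$ and $\sigma_{-\alpha}(U_{-\alpha}(K_{v_2}))$ to commute, so that $\theta(H_v)=G(K_v)$ while $H_{v_1}$ and $H_{v_2}$ commute elementwise. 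Defining $\mathcal{G}_v$ as $\widehat\varphi(\mathcal G^H_v)$ times ``the standard lift of $U_{\pm\alpha}(K_v)$'' does not help: your parenthetical reason (``$\widehat\varphi$ is a homomorphism and $D\supset\widehat\varphi(C_0)$'') only controls commutators of the $\widehat\varphi$-parts with each other; it says nothing about commutators involving the adjoined standard lifts, and that is exactly what must be proved. Nothing in the hypotheses makes those commutators a priori lie in $D$.

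The paper's proof bridges this gap by a genuinely different mechanism: one conjugates the single commuting pair $(\varphi(u_1),\varphi(u_2))$ by $M(K)$, uses weak approximation in the Levi subgroup $M$ (Lemma~\ref{L:M-WA}) and the continuity of the commutator map to pass to $M(K_{v_i})$-orbits, and then invokes the $K$-irreducibility of the $M$-action on $W_{\pm\alpha}=U_{\pm\alpha}/U_{\pm2\alpha}$ (Proposition~\ref{P:Irred}) together with the fact that $U_{\pm2\alpha}$ is the derived subgroup of $U_{\pm\alpha}$ to conclude that the subgroup generated by the $M(K_{v_i})$-orbit of $\varphi(u_i)$ is all of $U_{\pm\alpha}(K_{v_i})$. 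This ``spreading'' step --- involving the Levi action, weak approximation for $M$, and the representation-theoretic Proposition~\ref{P:Irred} --- is entirely absent from your proposal, and it is not bookkeeping: it is the main content. Without it the proof does not close.
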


The proof requires one technical fact (Proposition \ref{P:Irred} below) which we will prove in the Appendix. To state it,
we observe that the centralizer $M = Z_G(T_s)$ of a maximal $K$-split torus $T_s$ of $G$ acts on each root subgroup $U_{\beta}$
for $\beta \in \Phi(G , T_s)$ via the adjoint action, and consequently acts on the quotient $W_{\pm \alpha} := U_{\pm \alpha}/U_{\pm 2\alpha}$.
Furthermore, it is known  $W_{\pm \alpha}$ is a vector group over $K$, and the above action gives rise to a $K$-linear representation
$\rho_{\pm \alpha} \colon M \to \mathrm{GL}(W_{\pm \alpha})$ - cf.\,\cite[\S21]{Bo}. We also recall that since $G$ has $K$-rank 1, its Tits index can have only
one or two circled vertices (cf.\,\cite{T}).
\begin{prop}\label{P:Irred}
Let $(W , \rho)$ denote either  $(W_{\alpha} , \rho_{\alpha})$ or
$(W_{-\alpha} , \rho_{-\alpha})$, and  assume that ${\rm char}\: K \neq 2.$ Then
$\rho$ is $K$-irreducible. More precisely, one of the
following two possibilities holds:

\vskip1mm

$(i)$ \parbox[t]{15cm}{the Tits index of $G$ has only one
circled node and then $\rho$ is absolutely irreducible;}

\vskip1mm

$(ii)$ \parbox[t]{15cm}{the Tits index of $G$ has two circled
nodes; then $W = W_1 \oplus W_2$ where $W_1$ and $W_2$ are
absolutely irreducible $M$-invariant subspaces defined over a
quadratic extension $L/K$ and $W_2 = W_1^{\sigma}$ for the
nontrivial automorphism $\sigma$ of $L/K.$ 
}
%
%
\end{prop}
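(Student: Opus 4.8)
The plan is to base-change to a separable closure $\bar K$, identify $W_{\pm\alpha}$ with a graded piece of the Lie algebra $\mathfrak{g}=\mathrm{Lie}(G)$ cut out by the relative-root grading, and invoke the classical fact that such a graded piece is irreducible over the degree-zero part. Fix a maximal $K$-torus $T$ of $G$ with $T_s\subset T\subset M$, write $\Phi=\Phi(G_{\bar K},T_{\bar K})$, and let $\Gamma=\mathrm{Gal}(\bar K/K)$ act on $\Phi$ by the $*$-action (through a finite quotient). Restriction to $T_s$ gives a $\Gamma$-equivariant surjection $\Phi\cup\{0\}\to\Phi(G,T_s)\cup\{0\}$, trivial on the target since $T_s$ is $K$-split, whose fibre over $0$ is the root system $\Phi_0$ of $M$; choose a $\Gamma$-stable basis $\Pi$ of $\Phi$ meeting $\Phi_0$ in a basis $\Pi_0$. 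Since $\mathrm{rk}_K\,G=1$, the set $\Pi\setminus\Pi_0$ of circled nodes is a single $\Gamma$-orbit, which by \cite{T} has one or two elements --- exactly the two cases of the proposition. Because $\mathrm{char}\,K\neq2$, by \cite[\S21]{Bo} (and \cite[\S5]{BT}) the quotient $W_{\pm\alpha}=U_{\pm\alpha}/U_{\pm2\alpha}$ is a vector group carrying the linear $M$-action $\rho_{\pm\alpha}$, and this $M$-module is identified with the $(\pm\alpha)$-weight subspace of $\mathfrak{g}$ for $T_s$; over $\bar K$ the weights of $W_\alpha$ are exactly the roots $a\in\Phi$ with $\sum_{\delta\in\Pi\setminus\Pi_0}c_\delta(a)=1$, each of multiplicity one. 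I treat $W_\alpha$; the module $W_{-\alpha}$ is handled identically, or by conjugating $U_\alpha$ to $U_{-\alpha}$ by an element of $G(K)$.

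\emph{Case $(i)$: $\Pi\setminus\Pi_0=\{\delta\}$, with $\delta$ fixed by $\Gamma$.} Then the $\mathbb{Z}$-grading of the simple Lie algebra $\mathfrak{g}_{\bar K}$ by the $\delta$-coefficient is defined over $K$ and agrees with the $T_s$-grading, so $(W_\alpha)_{\bar K}$ is its degree-one component $\mathfrak{g}_1$. It is a standard fact that for the grading of a simple Lie algebra attached to a single simple root the degree-one component is irreducible over the degree-zero component $\mathfrak{g}_0$ (equivalently, it has a unique $\Phi_0$-dominant weight, the highest root with $\delta$-coefficient $1$; this may also be read off the tables of \cite{T}). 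Hence $(W_\alpha)_{\bar K}$ is irreducible over $M_{\bar K}$, i.e.\ $\rho_\alpha$ is absolutely irreducible.

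\emph{Case $(ii)$: $\Pi\setminus\Pi_0=\{\delta_1,\delta_2\}$, a two-element $\Gamma$-orbit.} Let $\Gamma_L\le\Gamma$ be the index-two stabilizer of $\delta_1$, let $L=\bar K^{\Gamma_L}$ (a quadratic extension of $K$), and let $\sigma$ generate $\mathrm{Gal}(L/K)$. Refine the $T_s$-grading to the $\mathbb{Z}^2$-grading of $\mathfrak{g}_{\bar K}$ by $(c_{\delta_1},c_{\delta_2})$; then $(W_\alpha)_{\bar K}=\mathfrak{g}_{(1,0)}\oplus\mathfrak{g}_{(0,1)}$, and since every $a\in\Phi_0$ has $c_{\delta_1}(a)=c_{\delta_2}(a)=0$ (a root being a constant-sign combination of simple roots), the group $M_{\bar K}=\langle T_{\bar K},\,U_a:a\in\Phi_0\rangle$ preserves this decomposition. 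Put $W_1=\mathfrak{g}_{(1,0)}$, $W_2=\mathfrak{g}_{(0,1)}$. Deleting the node $\delta_2$ yields a Levi subalgebra $\mathfrak{l}\subset\mathfrak{g}_{\bar K}$ whose degree-one part for the $c_{\delta_1}$-grading equals $W_1$ and whose degree-zero part equals $\mathfrak{g}_{(0,0)}=\mathfrak{m}_{\bar K}$; applying the fact from case $(i)$ inside the simple factor of $\mathfrak{l}$ containing $\delta_1$ (the remaining factors and the centre of $\mathfrak{l}$ acting by scalars on $W_1$) shows $W_1$ is an irreducible $M_{\bar K}$-module, and likewise $W_2$. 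The $T_{\bar K}$-weights of $W_1$ and $W_2$ are disjoint, so $W_1\not\cong W_2$; an element of $\Gamma$ sending $\delta_1$ to $\delta_2$ interchanges $W_1$ and $W_2$, so by Galois descent $W_1$ and $W_2$ are $M_L$-submodules defined over $L$ with $W_2={}^{\sigma}W_1$. Finally, any nonzero proper $M$-invariant $K$-subspace of $W_\alpha$ would base-change to a nonzero proper $\Gamma$-stable $M_{\bar K}$-submodule of $W_1\oplus W_2$; but the only proper nonzero submodules are $W_1$ and $W_2$ (as $W_1\not\cong W_2$), and $\Gamma$ stabilizes neither --- so $\rho_\alpha$ is $K$-irreducible.

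The only substantive input is the graded-piece irreducibility used in both cases --- the degree-one component of a single-root grading of a simple Lie algebra is irreducible over the degree-zero Levi --- while everything else is bookkeeping with the Tits index; I would either cite it as classical in the theory of (maximal) parabolic subgroups or give a short self-contained proof from the uniqueness of the dominant weight in $\mathfrak{g}_1$ together with the fact that $\bigoplus_{i\ge1}\mathfrak{g}_i$ is generated by $\mathfrak{g}_1$. The hypothesis $\mathrm{char}\,K\neq2$ enters only through the structure theory of $U_{\pm\alpha}$ in the $BC_1$-case (where $U_\alpha$ is of Heisenberg type), ensuring that $W_{\pm\alpha}$ is a vector group with the linear $M$-action described above; in characteristic $0$ --- the only case needed for Theorem D --- this is automatic.
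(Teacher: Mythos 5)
You take essentially the same skeleton as the paper --- identifying $W_{\pm\alpha}$ with the $(\pm\alpha)$-eigenspace of $\mathfrak{g}$ under $T_s$, decomposing it by the coefficient of a distinguished simple root, proving each piece is an irreducible $M$-module, and then sorting out the $K$-rationality when there are two circled nodes --- but you execute the two key steps differently. For the irreducibility of each $W_i$, the paper proves a combinatorial Lemma A.1 directly: any $\mathrm{Ad}\,M$-invariant subspace of $\mathfrak{u}(\delta_0)$ is a sum of root spaces, and a root-string argument using $[\mathfrak{g}_{\beta_1},\mathfrak{g}_{\beta_2}]=\mathfrak{g}_{\beta_1+\beta_2}$ (which is where $\mathrm{char}\,K\neq 2$ and the exclusion of type $\textsf{G}_2$ enter) shows any nonzero such subspace contains $\mathfrak{g}_{\delta_0}$ and hence everything. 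You instead cite the classical fact that the degree-one piece of a single-simple-root grading of a simple Lie algebra is irreducible over the degree-zero Levi, reducing case (ii) to case (i) by deleting $\delta_2$; that reduction is valid (the other simple factors of the Levi commute with the one containing $\delta_1$, hence act by scalars on $W_1$), and the cited fact is essentially equivalent to Lemma A.1 --- so this is more a difference of packaging than of substance. Do note, though, that your remark that $\mathrm{char}\,K\neq2$ enters ``only'' through the $BC_1$-structure of $U_\alpha$ is not quite right: the identity $[\mathfrak{g}_{\beta_1},\mathfrak{g}_{\beta_2}]=\mathfrak{g}_{\beta_1+\beta_2}$ underlying the irreducibility also requires it, which is precisely why the paper proves Lemma A.1 from scratch rather than quoting a characteristic-zero statement. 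For the rationality in case (ii), the paper works with the explicit $2$-dimensional central torus $T_0=Z(M)^{\circ}$, shows it splits over a quadratic $L/K$, and observes that the nontrivial $\sigma$ must interchange $\gamma_1,\gamma_2$ (else $T_0$ would be $K$-split); you instead argue via the Galois $*$-action on the Tits index and descent, using $W_1\not\cong W_2$ to pin down the possible proper submodules. These are dual formulations of the same fact: yours is slightly more abstract, the paper's more explicit. Both arguments are correct.
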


\bigskip

\noindent {\it Proof of Proposition \ref{P:reduction}.} We only need to prove that the extension
\begin{equation}\label{E:G5}
1 \to  F := C^{(S)}(G)/D \longrightarrow \widecheck{G} := \widehat{G}^{(S)}/D  \stackrel{\theta}{\longrightarrow}  \overline{G}^{(S)} \to 1
\end{equation}
is central, for which we will use our standard strategy. More precisely, we let $\widecheck{U}_{\alpha}$ and $\widecheck{U}_{-\alpha}$ denote the closures
in $\widecheck{G}$ of $U_{\alpha}(K)$ and $U_{-\alpha}(K)$, respectively. Since the $S$-arithmetic and $S$-congruence topologies on $U_{\pm \alpha}(K)$ coincide, $\theta$ induces
isomorphisms
$$
\widecheck{U}_{\alpha} \longrightarrow \overline{U_{\alpha}(K)} \simeq U_{\alpha}(\mathbb{A}(S)) \ \ \text{and} \ \ \widecheck{U}_{-\alpha} \longrightarrow \overline{U_{-\alpha}(K)} \simeq
U_{-\alpha}(\mathbb{A}(S)),
$$
and we let $\sigma_{\pm \alpha} \colon U_{\pm \alpha}(\mathbb{A}(S)) \to \widecheck{U}_{\pm \alpha}$ denote the inverse (continuous) isomorphisms.
For $v \in V^K \setminus S$, we let
$\mathcal{G}_v$ denote the subgroup of $\widecheck{G}$ generated by $\sigma_{\alpha}(U_{\alpha}(K_v))$ and $\sigma_{-\alpha}(U_{-\alpha}(K))$. Repeating almost verbatim the argument used at the beginning of this section, we see that the subgroups $\mathcal{G}_v$ satisfy all the assumptions of Theorem B, which then yields the centrality of (\ref{E:G5}) provided we show that $\sigma_{\alpha}(U_{\alpha}(K_{v_1}))$ and $\sigma_{-\alpha}(U_{-\alpha}(K_{v_2}))$  commute elementwise for any $v_1 , v_2 \in V^K \setminus S$, $v_1 \neq v_2$. So, the central part of the present argument is concerned with proving this fact. We will establish it in the following equivalent form. Define
$$
c_{v_1 , v_2} \colon U_{\alpha}(K_{v_1}) \times U_{-\alpha}(K_{v_2}) \longrightarrow F, \ \ (u_1 , u_2) \mapsto [\sigma_{\alpha}(u_1) , \sigma_{-\alpha}(u_2)].
$$
Clearly, $c_{v_1 , v_2}$ is continuous, and what we need to prove is

\vskip2mm

\noindent $(\star)$ \  $c_{v_1 , v_2} \equiv 1$.

\vskip2mm

\noindent By our assumption, the extension
$$
1 \to F_0 := C^{(S)}(H)/C_0 \longrightarrow \widecheck{H} := \widehat{H}^{(S)}/C_0 \stackrel{\theta_0}{\longrightarrow} \overline{H}^{(S)} \to 1
$$
is central. Since $H$ is clearly $K$-isotropic, the congruence completion $\overline{H}^{(S)}$ can, as usual, be identified with $H(\mathbb{A}(S))$.
Then for any $v_1 , v_2 \in V^K \setminus S$, $v_1 \neq v_2$, by Corollary \ref{C:A1}, we can define a bimultiplicative pairing
$$
c_{v_1 , v_2}^0 \colon H(K_{v_1}) \times H(K_{v_2}) \longrightarrow F_0, \ \ (x_1 , x_2) \mapsto [\widetilde{x}_1 , \widetilde{x}_2] \ \ \text{for} \ \ \widetilde{x}_i \in
\theta_0^{-1}(x_i).
$$
As we already mentioned, the group $H(K_{v_i})$ does not contain any proper noncentral normal subgroups, hence $H(K_{v_i}) = [H(K_{v_i}) , H(K_{v_i})]$. Since $F_0$
is commutative, it follows that the pairing $c_{v_1 , v_2}^0$ is trivial, and therefore the pre-images $\theta_0^{-1}(H(K_{v_1}))$ and $\theta_0^{-1}(H(K_{v_2}))$
commute elementwise.

There exists unipotent $K$-subgroups $\mathscr{U}_+$ and $\mathscr{U}_-$ of $H$ such that $\varphi(\mathscr{U}_{\pm})$ is contained in $U_{\pm \alpha}$ but not
not in $U_{\pm 2\alpha}$. Then for any $u_1 \in \mathscr{U}_+(K_{v_1})$, $u_2 \in \mathscr{U}_-(K_{v_2})$ we have
$$
c_{v_1 , v_2}(\varphi(u_1) , \varphi(u_2)) = \widecheck{\varphi}(c_{v_1 , v_2}^0(u_1 , u_2)) = 1,
$$
where $\widecheck{\varphi} \colon \widecheck{H} \to \widecheck{G}$ is induced by $\widehat{\varphi}$.
Furthermore, the group $M(K)$ naturally acts on $F$ by conjugation, and for any $m \in M(K)$ and any $u_1 , u_2$ as above we have
\begin{equation}\label{E:G6}
c_{v_1 , v_2}(m \varphi(u_1) m^{-1} , m \varphi(u_2) m^{-1}) = m c_{v_1 , v_2}(\varphi(u_1) , \varphi(u_2)) m^{-1} = 1.
\end{equation}
We now note the following.
\begin{lemma}\label{L:M-WA}
{\rm (Weak approximation for $M$)} For any finite subset $V$ of $V^K$, the diagonal embedding $M(K) \hookrightarrow M_V := \prod_{v \in V} M(K_v)$ has
dense image.
\end{lemma}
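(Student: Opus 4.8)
\smallskip

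\noindent\emph{Proof proposal.} The plan is to deduce weak approximation for $M = Z_G(T)$ from weak approximation for $G$ itself, by means of the Bruhat ``big cell'' of $G$ attached to the minimal parabolic $P_\alpha = M\cdot U_\alpha$. Recall (see \cite[\S21]{Bo}, \cite[\S5]{BT}) that the product map
\[
\Omega := U_{-\alpha}\times M\times U_\alpha\longrightarrow G,\qquad (x,m,y)\longmapsto xmy,
\]
is an open immersion, defined over $K$, onto a dense open $K$-subvariety of $G$; in particular $\Omega(K_v)$ is $v$-adically open in $G(K_v)$, and each coordinate projection $\Omega\to U_{-\alpha}$, $\Omega\to M$, $\Omega\to U_{\alpha}$ is a $K$-morphism and hence $v$-adically continuous, for every $v$. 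Given a target $(m_v)_{v\in V}\in M_V$, I would regard it as a point of $\prod_{v\in V}G(K_v)$ via $M\subset\Omega\subset G$ and, assuming weak approximation for $G$ with respect to $V$, choose $g\in G(K)$ close to it. Since each $\Omega(K_v)$ is open, a sufficiently close $g$ lies in $\Omega(K_v)$ for all $v\in V$; as membership in $\Omega$ is a geometric condition, this forces $g\in\Omega(K)$, so $g=x\cdot m(g)\cdot y$ with $m(g)\in M(K)$. Applying the projection $\Omega\to M$ at each place of $V$, we find that $m(g)$ is close to $m_v$ in $M(K_v)$ for every $v\in V$, which yields the density of $M(K)$ in $M_V$.

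It then remains to prove weak approximation for $G$ with respect to $V$, and here I would invoke only what has already been recalled in this section. Since $G$ is $K$-isotropic and simply connected, the Kneser--Tits theorem gives $G(K)=\langle U_\alpha(K),U_{-\alpha}(K)\rangle$ and $G(K_v)=\langle U_\alpha(K_v),U_{-\alpha}(K_v)\rangle$ for every place $v$. The root groups $U_{\pm\alpha}$ are split unipotent $K$-groups, hence (as $\mathrm{char}\:K=0$) $K$-isomorphic to affine spaces, so $U_{\pm\alpha}(K)$ is dense in $\prod_{v\in V}U_{\pm\alpha}(K_v)$ by elementary weak approximation for the field $K$. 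Let $\Delta\subset\prod_{v\in V}G(K_v)$ be the closure of $G(K)$; it is a closed subgroup containing $\prod_{v\in V}U_\alpha(K_v)$ and $\prod_{v\in V}U_{-\alpha}(K_v)$, hence also the abstract subgroup $\Theta$ they generate. Because $V$ is \emph{finite} and $1$ lies in both root groups, a routine ``padding'' argument --- given a tuple, insert identities so that its coordinates at all places of $V$ are expressed simultaneously by one common alternating word $U_\alpha\,U_{-\alpha}\,U_\alpha\cdots$ --- shows $\Theta=\prod_{v\in V}\langle U_\alpha(K_v),U_{-\alpha}(K_v)\rangle=\prod_{v\in V}G(K_v)$; hence $\Delta=\prod_{v\in V}G(K_v)$, i.e.\ $G(K)$ is dense.

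I expect the only mildly delicate point to be this last word-manipulation step, which genuinely uses the finiteness of $V$; checking that a good enough approximation $g$ lands in the big cell at every $v\in V$ and tracing it through the coordinate projection is routine. (Alternatively, weak approximation for the simply connected group $G$ may simply be quoted from \cite[\S7.3]{PlRa}; I spelled out the short self-contained argument because it uses nothing beyond the Kneser--Tits theorem already invoked above and weak approximation for affine space.)
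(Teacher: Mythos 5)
Your argument is correct and is essentially the paper's proof: the paper likewise invokes the Bruhat big cell $\Omega = U_{-\alpha} M U_{\alpha}$, quotes weak approximation for the simply connected group $G$ from \cite[Theorem~7.8]{PlRa}, observes that the $K$-open subset $\Omega$ inherits weak approximation, and projects to $M$. The only divergence is that you optionally re-derive weak approximation for $G$ from Kneser--Tits and the affine-space case via a padding argument rather than citing it, which is a sound (and classical) shortcut in the $K$-isotropic case.
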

\begin{proof}
By the Bruhat decomposition, the product map $\mu \colon U_{-\alpha} \times M \times U_{\alpha} \to G$ yields a $K$-isomorphism onto a Zariski-open set $\Omega
\subset G$. Being simply connected, $G$ has weak approximation with respect to any finite set of places, i.e. the diagonal embedding $G(K) \hookrightarrow G_V$ is dense
(cf.\,\cite[Theorem 7.8]{PlRa}). Since $\Omega$ is $K$-open, the diagonal embedding $\Omega(K) \hookrightarrow \Omega_V$ is also dense, and our assertion follows.
\end{proof}

Using this in conjunction with (\ref{E:G6}) and the continuity of $c_{v_1 , v_2}$, we obtain that
$$
c_{v_1 , v_2}(X_1(u_1) , X_2(u_2)) = \{ 1 \} \ \ \text{where} \ \ X_i(u_i) = \{ m_i \varphi(u_i) m_i^{-1} \ \vert \ m_i \in M(K_{v_i}) \}.
$$
Then also
\begin{equation}\label{E:G8}
c_{v_1 , v_2}(\langle X_1(u_1) \rangle , \langle X_2(u_2) \rangle) = \{ 1 \},
\end{equation}
where $\langle X_i(u_i) \rangle$ is the subgroup generated by $X_i(u_i)$. Now, it follows from our assumptions and the Zariski-density of
$\mathscr{U}_{\pm}(K)$ in $\mathscr{U}_{\pm}$ that one can pick $u_1 \in \mathscr{U}_+(K)$ and $u_2 \in \mathscr{U}_-(K)$ so that $\varphi(u_1)
\notin U_{2\alpha}(K)$ and $\varphi(u_2) \notin U_{-2\alpha}(K)$. So, if we let $\nu_{\pm \alpha} \colon U_{\pm \alpha} \to
U_{\pm \alpha}/U_{\pm 2\alpha} =: W_{\pm \alpha}$ denote the quotient map, then $w_1 = \nu_{\alpha}(\varphi(u_1))$ and
$w_2 = \nu_{-\alpha}(\varphi(u_2))$ are {\it nontrivial} elements in $W_{\alpha}(K)$ and $W_{-\alpha}(K)$. Taking into account the Zariski-density
of $M(K)$ in $M$ (cf. \cite[18.3]{Bo}) and applying Proposition \ref{P:Irred}, we see that for any field extension $P/K$, the $P$-vector space $W_{\alpha}(P)$ (resp.,
$W_{-\alpha}(P)$) is spanned by $\rho_{\alpha}(M(P)) \cdot w_1$ (resp., $\rho_{-\alpha}(M(P)) \cdot w_2$). On the other hand, since $\alpha(T_s(P))$ contains
${P^{\times}}^d$ for some $d \geqslant 1$ and $P$ is generated by ${P^{\times}}^d$ as an additive group, the additive subgroup of \,$W_{\alpha}(P)$ (resp., $W_{-\alpha}(P)$)
generated by   $\rho_{\alpha}(M(P)) \cdot w_1$ (resp., $\rho_{-\alpha}(M(P)) \cdot w_2$) is automatically a $P$-vector subspace. Altogether, this means that
\begin{equation}\label{E:G9}
\nu_{\alpha}(\langle X_1(u_1) \rangle) = W_{\alpha}(K_{v_1}) \ \ \text{and} \ \ \nu_{-\alpha}(\langle X_2(u_2) \rangle) = W_{-\alpha}(K_{v_2}).
\end{equation}
Clearly, $U_{\pm 2\alpha}$ is contained in the center of $U_{\pm \alpha}$, and since $U_{\pm 2 \alpha}(P)$ coincides with the commutator subgroup of $U_{\pm \alpha}(P)$
for any field extension $P/K$ (cf.\,\cite[5.3]{BuH} - note that this fact is true over any infinite field of characteristic $\neq 2$), we obtain from (\ref{E:G9}) by
passing to commutator subgroups that $\langle X_1(u_1) \rangle$ (resp., $\langle X_2(u_2) \rangle$) contains $U_{2\alpha}(K_{v_1})$ (resp., $U_{-2\alpha}(K_{v_2})$). Then
(\ref{E:G9}) yields
$$
X_1(u_1) = U_{\alpha}(K_{v_1}) \ \ \text{and} \ \ X_2(u_2) = U_{-\alpha}(K_{v_2}).
$$
Combining this with (\ref{E:G8}), we obtain $(\star)$, as required. \hfill $\Box$

\medskip

\begin{remark}
(1) Proposition \ref{P:reduction} for $C = C_0$ is essentially due to Rajan and Venkataramana \cite{RjV} and in fact
goes back to Raghunathan's argument in \cite[\S 3]{Ra2}. We note, however, that the discussion of the irreducibility
of the action of $M$ on $W_{\pm \alpha}$ (which is our Proposition \ref{P:Irred}) is limited in \cite{RjV} to the groups
$\mathrm{SO}(n , 1)$ and $\mathrm{SU}(n , 1)$ which are the main focus of that paper - see the paragraph before last on p.\,548.
It should also be pointed out that the assertion in the proof of Theorem 7 in \cite{RjV} that part $(ii)$ of that theorem is
a restatement of \cite[Proposition 2.14]{Ra2} is not totally accurate as Proposition 2.14 of \cite{Ra2} involves one extra condition -
see $(iii)$ in its statement. Nevertheless, according to our Theorem B, the result described in \cite[Theorem 7(ii)]{RjV} is indeed valid, and not only for isotropic groups. In view of these technicalities, we chose - for the reader's convenience - to give a complete proof of Proposition
\ref{P:reduction}.

(2) It was pointed out in \cite{RaV} and \cite{RjV} that the assertion of Proposition \ref{P:reduction} has the following implication:

{\it Given a congruence subgroup $\Gamma$ of $G(\mathcal{O}(S))$ and a nontrivial group homomorphism $\phi \colon \Gamma \to \Z$, there exists a congruence subgroup
$\Delta$ of $H(\mathcal{O}(S))$ and an element $g \in G(K)$ such that $\Delta' = g\Delta g^{-1}$ is contained in $\Gamma$ and the restriction $\phi \vert \Delta'$
is nontrivial.}
\vskip1mm

\noindent This is subsumed, however, in the ``Sandwich Lemma'' of Lubotzky \cite[Lemma 2.4]{Lu2}, which states that the above result is valid
without any assumptions on the congruence kernels {\it if} $H(\mathcal{O}(S))$ satisfies the so-called {\it Selberg property}. We refer to \cite{Lu2}
for precise definitions, and only mention that the Selberg property is in fact property $(\tau)$ for congruence subgroups. More importantly,
 the Selberg property is now known to hold in all situations (see \cite{Cl}, which concluded the efforts by various people), making the result of
Lubotzky unconditional.

At the same time, proving Selberg's property even for $\mathrm{SL}_2$ requires the heavy machinery of the theory of automorphic forms,
so the approach developed in \cite{RaV} and \cite{RjV} provides an algebraic alternative in some cases. (From this perspective, our Proposition
\ref{P:Bian} yields an algebraic proof of the following fact: {\it Given a congruence subgroup $\Gamma$ of the Bianchi group $\mathrm{SL}_2(\mathcal{O}_d)$,
where $\mathcal{O}_d$ is the ring of integers in $K_d = \Q(\sqrt{-d})$ with $d$ a square-free integer $> 0$, and a nontrivial homomorphism $\phi \colon
\Gamma \to \Z$, there exists a congruence subgroup $\Delta$ of $\mathrm{SL}_2(\Z)$ and $g \in \mathrm{SL}_2(K_d)$ such that $\Delta' = g \Delta g^{-1}$ is
contained in $\Gamma$ and the restriction $\phi \vert \Delta'$ is nontrivial} (this should be compared to the results in \cite[3.6]{Serre1})).

(3) One can ask whether it is possible to strengthen Proposition \ref{P:reduction} and prove that for an absolutely almost simple simply connected
$K$-group $G$ and a proper  $K$-subgroup $H$, the map of the congruence kernels $\iota^{(S)}_{G , H} \colon C^{(S)}(H) \to C^{(S)}(G)$ is actually surjective.
This property can be helpful for proving the centrality of $C^{(S)}(G)$ in view of the following simple observation (cf.\,Proposition 2 in \cite[5.2]{PR-Milnor}):
{\it Assume that $G(K)$ does not contain any proper noncentral normal subgroups. If there exists a $K$-subgroup $H$ of $G$ which is fixed elementwise by
a nontrivial $K$-automorphism $\sigma$ of $G$ such that $\iota^{(S)}_{G , H}$ is surjective, then $C^{(S)}(G)$ is central, hence finite.}
This observation (which can be traced back to
\cite{BMS} - see \cite[5.3]{PR-Milnor} on how it can be used to establish the centrality of the congruence kernel for $\mathrm{SL}_n$, $n \geqslant 3$) was employed
by Kneser \cite{Kn} to prove that if $G = \mathrm{Spin}_n(q)$ is the spinor group of a nondegenerate quadratic form $q$ over $K$ in $n \geqslant 5$ variables and
$\mathrm{rk}_S\: G \geqslant 2$, then $C^{(S)}(G)$ is central. To this end, he proved that for any anisotropic $x \in K^n$ with the stabilizer $G(x)$ satisfying
$\mathrm{rk}_S\: G(x) \geqslant 1$, the map $C^{(S)}(G(x)) \to C^{(S)}(G)$ is surjective (see Proposition 3 in \cite[5.2]{PR-Milnor} for an indication of the idea).
Subsequently, analogues of these statements were established for groups of the classical types and type $\textsf{G}_2$ in \cite{Ra-G2}, \cite{Ra-CSP}, \cite{Ra-Hab},
\cite{To1}, \cite{To2}. To give an example where $\iota^{(S)}_{G , H}$ is not surjective, we consider an imaginary quadratic extension $L/\Q$ and let $h$ be the corresponding
2-dimensional hyperbolic hermitian form. Set $f = h \perp g$, where $g$ is a 1-dimensional hermitian form, and consider the natural embedding of (absolutely almost simple,
simply connected) $\Q$-groups
$$
H := \mathrm{SU}_2(h) \to \mathrm{SU}_3(f) =: G.
$$
We claim that for $S = \{ \infty \}$, the map $\iota^{(S)}_{G , H}$ is \emph{not} surjective. Indeed, it follows from the results of Kazhdan \cite{Kaz} and
Wallach \cite{Wa} that there exists a congruence subgroup $\Gamma$ of $G(\Z)$ with infinite abelianization $\Gamma^{\small \mathrm{ab}}$, which immediately implies
that the congruence kernel $C^{(S)}(G)$ is infinite (cf. \cite[\S 3]{Serre1}). Since $H$ is fixed by the nontrivial automorphism $\sigma = \mathrm{Int} \: x$ of $G$, where
$x = \mathrm{diag}(1, 1, -1) \in \mathrm{U}_3(f)$, this fact together with the above observation prevents $\iota^{(S)}_{G , H}$ from being surjective.

While $\iota^{(S)}_{G , H}$ may or may not be surjective, the available results (including those obtained in \cite{RaV} for the embeddings $\mathrm{SO}(2m-1 , 1)
\to \mathrm{SU}(2m-1 , 1)$ and $\mathrm{SO}(2m-1 , 1) \to \mathrm{SO}(2m+1 , 1)$ in the anisotropic case and $C = C_0$) suggest that the assertion of Proposition
\ref{P:reduction} should always be true whenever $G$ and $H$ are absolutely almost simple simply connected $K$-groups and $\mathrm{rk}_S\: H > 0$. If proven, this would
simplify the verification of centrality in a number of cases.
\end{remark}

\medskip

{\it Proof of Theorem D.} Recall that here $\mathrm{char}\: K = 0$. It follows from Propositions
\ref{P:SL2-gen} and \ref{P:reduction} that it is enough to construct a
$K$-homomorphism $\varphi \colon H = \mathrm{SL}_2 \to G$
such that $\varphi(H) \cap (U_{\beta} \setminus U_{2\beta}) \neq \varnothing$
for $\beta = \alpha$ and $-\alpha$. For this we consider the Lie algebra
$\mathfrak{g} = L(G)$ of $G$, and pick a nonzero eigenvector $X \in \mathfrak{g}(K)$ for
the adjoint action of the maximal $K$-split torus $T_s$ with character $\alpha$. Applying
the Jacobson-Morozov Lemma (cf.\,\cite{J}, Ch.\,III, Thm.\,17), we can find a
$K$-subalgebra $\mathfrak{r} \subset \mathfrak{g}$ that contains $X$ and is isomorphic
to $\mathfrak{sl}_2$. There exists an algebraic $K$-subgroup $R$ of $G$ with
the Lie algebra $\mathfrak{r}$ (cf. \cite{Bo}, Cor. 7.9),
which is $K$-isogenous to $\mathrm{SL}_2$. Let $\mathscr{U}$ be a 1-dimensional
unipotent $K$-subgroup of $R$ whose Lie algebra $L(U)$ is spanned by $X$, and let $\mathscr{T}$ be
a 1-dimensional $K$-split torus that normalizes $\mathscr{U}$.  Then $\mathscr{T}$ and and $T_s$ are
conjugate by an element of $N_G(\mathscr{U})^{\circ}(K)$ (cf. \cite{BT}), and after performing
this conjugation we can assume that $\mathscr{T} = T_s$. Since $\mathfrak{r}$ also contains an eigenvector
for $\mathrm{Ad}\: \mathscr{T}$ with character $-\alpha$, we obtain that $R \cap (U_{\beta} \setminus U_{2\beta})
\neq \varnothing$ for $\beta = \pm \alpha$, so a $K$-isogeny $\varphi \colon H = \mathrm{SL}_2 \to R$ is a required
homomorphism.  \hfill $\Box$

\medskip

To conclude, we will briefly indicate how Theorem D can be partially extended to positive
characteristic $p > 2$. The main distinction is that if $p > 0$ and $\mathrm{rk}_S\: G = 1$ then the arithmetic and congruence
topologies of $G$ may not coincide on $U_{\pm \alpha}(K)$.  So, to
use our approach we need to pass to the {\it reduced} congruence
kernel $\overline{C}^{(S)}(G) = C^{(S)}(G)/N$ where $N$ is the closed normal subgroup of
$\widehat{G}^{(S)}$ generated by the kernels of the restrictions $\pi^{(S)}
\vert \widehat{U_{\pm \alpha}(K)}$. Then Propositions \ref{P:SL2-gen} and
\ref{P:reduction} remain valid if one replaces the full congruence kernel
with the reduced one in their statements. Furthermore, by going
through the list of absolutely almost simple groups defined over a global
field $K$ of characteristic $p > 2$ and having $K$-rank one, one
verifies that there is a $K$-homomorphism $\varphi \colon H
= \mathrm{SL}_2 \to G$ such that $\varphi(H) \cap (U_{\beta} \setminus
U_{2\beta}) \neq \varnothing$ for $\beta = \pm \alpha$(this fact being false in
characteristic two). This puts all the ingredients of the proof of
Theorem D in place, and taking into account the computation of $M(S , G)$ in positive
characteristic (cf. \cite{PR1}), we arrive at the following conclusion:
{\it $\overline{C}^{(S)}(G)$ is generated as a closed normal subgroup of $\widehat{G}^{(S)}/N$
by a single element.}

\newpage

\centerline{\sc Appendix: Proof of Proposition \ref{P:Irred}}

\vskip5mm

We will give the argument for $(W_{\alpha} , \rho_{\alpha})$. Let
$T$ be a maximal $K$-torus of $G$ containing $T_s$, and let
$\Phi = \Phi(G , T)$ be the corresponding (absolute) root system. We
fix compatible orderings on $X(T) \otimes_{\Z} \R$ and $T(T_s) \otimes_{\Z} \R$
so that $\alpha$ is positive. Let $\Phi^+$ (resp., $\Delta$) be the corresponding system
of positive (resp., simple) roots in $\Phi$. Furthermore, we let $\Delta_0$ denote the
subset of $\Delta$ consisting of roots with trivial restriction to $T_s$ (and then $\Delta \setminus \Delta_0$ is the
set of distinguished roots). Since $\mathrm{rk}_K \: G = 1$, it follows from the tables in \cite{T} that
$\mid\Delta \setminus \Delta_0\mid \leqslant 2$; note that any $\delta \in \Delta \setminus \Delta_0$ is taken to $\alpha$
by the restriction map  $X(T) \to X(T_s)$.

For $\beta \in \Phi$, we let $\mathscr{U}_{\beta}$
(resp., ${\mathfrak g}_{\beta}$) denote the 1-dimensional connected
unipotent subgroup of $G$ (resp., the 1-dimensional subspace of the
Lie algebra ${\mathfrak g} = L(G)$) corresponding to $\beta$ (thus, ${\mathfrak g}_{\beta}
= L({\mathscr{U}}_{\beta})$). Furthermore, we let $n_{\delta}(\beta)$ $(\delta \in \Delta)$
denote the integers that arise in the  decomposition $\beta = \sum_{\delta
\in \Delta} n_{\delta}(\beta)\delta$. Let $$\Theta =
\{ \beta \in \Phi^+ \mid \sum_{\delta \in \Delta - \Delta_0}
n_{\delta}(\beta) = 1 \}.$$
Clearly, $\Theta$  is precisely the set of roots $\beta \in \Phi$ that restrict
to $\alpha$. It follows that $\mathfrak{u} = \sum_{\beta \in \Theta} \mathfrak{g}_{\beta}$ is the eigenspace for $T_s$ for
the character $\alpha$, hence is invariant under $\mathrm{Ad}\: M$, where $M = Z_{G}(T_s)$. It is well-known that the vector spaces
$W_{\alpha} = U_{\alpha}/U_{2\alpha}$ and $\mathfrak{u}$ are isomorphic as $M$-modules.
We also recall that for $\beta , \gamma \in \Phi$, we have
\begin{equation}\tag{A.1}\label{E:A1}
(\mathrm{Ad}\: g)({\mathfrak g}_{\beta}) \subset \sum_{n \geqslant 1} {\mathfrak g}_{\beta + n\gamma}  \ \ \text{for any} \ \ g \in \mathscr{U}_{\gamma},
\end{equation}
where as usual we set ${\mathfrak g}_{\delta} = 0$ if $\delta \in X(T)$ is not a root. Furthermore, since we exclude characteristic 2 and also
type $\textsf{G}_2$ (which does not have $K$-forms with $K$-rank 1), we have
\begin{equation}\tag{A.2}\label{E:A2}
[\mathfrak{g}_{\beta_1} , \mathfrak{g}_{\beta_2}] = \mathfrak{g}_{\beta_1 + \beta_2} \ \ \text{for any} \ \ \beta_1 , \beta_2 \in \Phi.
\end{equation}

\vskip2mm

\noindent {\bf Lemma A.1.} {\em
Fix $\delta_0 \in \Delta \setminus \Delta_0$, and set $\Theta(\delta_0) = \{ \beta
\in \Theta \mid n_{\delta_0}(\beta) = 1 \}$. Then $${\mathfrak u}(\delta_0)
:= \sum_{\beta \in \Theta(\delta_0)} {\mathfrak g}_{\beta}$$ is an
irreducible $M$-module.}

\vskip2mm

\begin{proof}
The group $M$ is generated by $T$ and $\mathscr{U}_{\gamma}$ for those $\gamma \in \Phi$ that restrict trivially to $T_s$.
Since any such $\gamma$ is a linear combination of elements of $\Delta_0$, the inclusion (\ref{E:A1}) shows that
${\mathfrak u}(\delta_0)$ is $\mathrm{Ad}\: M$-invariant. Let ${\mathfrak v} \subset {\mathfrak u}(\delta_0)$ be
a nonzero $M$-invariant subspace. As $M$ contains $T$, we have ${\mathfrak v} = \bigoplus_{\beta \in \Theta'} {\mathfrak
g}_{\beta}$ for some nonempty subset $\Theta' \subset \Theta(\delta_0)$
and $[{\mathfrak m} , {\mathfrak v}] \subset {\mathfrak v}$, where
${\mathfrak m} = L(M)$. For $\beta_1 , \beta_2 \in
\Phi$, we will write $\beta_1 \succ \beta_2$ if $\beta_1 - \beta_2$ is
a sum of positive roots. We claim that if $\beta_1 , \beta_2 \in
\Theta(\delta_0)$ and $\beta_1 \succ \beta_2$, then
\begin{equation}\tag{A.3}\label{E:A3}
{\mathfrak g}_{\beta_1} \subset {\mathfrak v} \ \ \
\Leftrightarrow \ \ \ {\mathfrak g}_{\beta_2} \subset {\mathfrak
v}
\end{equation}
Indeed, according to \cite[Ch.\,VI, \S 1, n$^{\circ}$ 6, Prop.\,19]{Bour}, there
exists a sequence of positive roots $\gamma_1, \ldots , \gamma_r \in \Phi^+$
such that $\beta_1 = \beta_2 + \gamma_1 + \cdots + \gamma_r$ and
$\beta_2 + \gamma_1 + \cdots + \gamma_i$ is a root for  $i = 1,
\ldots , r$. Since $n_{\delta}(\beta_1) = n_{\delta}(\beta_2)$ for any $\delta
\in \Delta \setminus \Delta_0$, we have $n_{\delta}(\gamma_i) = 0$, hence $\mathfrak{g}_{\pm \gamma_i}
\subset \mathfrak{m}$, for all $i$. So, if $\mathfrak{g}_{\beta_1} \subset \mathfrak{v}$, then using repeatedly $[\mathfrak{m}
, \mathfrak{v}] \subset \mathfrak{v}$  together with (\ref{E:A2}), we obtain
$$
{\mathfrak g}_{\beta_2} = [{\mathfrak g}_{-\gamma_1} , [{\mathfrak
g}_{-\gamma_2} , [ \cdots [{\mathfrak g}_{-\gamma_r} , {\mathfrak
g}_{\beta_1}] \cdots ] \subset {\mathfrak v},
$$
and vice versa, proving (\ref{E:A3}). Note that for any $\beta \in \Theta(\delta_0)$ we have $\beta \succ \delta_0$,
so using (\ref{E:A3}), we see that if $\mathfrak{g}_{\beta_0} \subset \mathfrak{v}$ for \emph{some} $\beta_0 \in \Theta(\delta_0)$, then
$\mathfrak{g}_{\delta_0} \subset \mathfrak{v}$, and consequently  $\mathfrak{g}_{\beta} \subset \mathfrak{v}$ for \emph{every} $\beta
\in \Theta(\delta_0)$. Thus, $\mathfrak{v} = \mathfrak{u}$, as claimed.
\end{proof}

If $\Delta \setminus \Delta_0 = \{ \delta_0 \}$ then the above lemma, together
with the remarks made prior to its statement,
immediately yields the irreducibility of $W_{\alpha}.$ Now, suppose that $\Delta
\setminus \Delta_0 = \{ \delta_1 , \delta_2 \}$. For $i = 1, 2$, set $$\mathfrak{u}_i
= \sum_{\beta \in \Theta(\delta_i)} \mathfrak{g}_{\beta},$$ where $\Theta(\delta_i)$ is the subset
of $\Theta$ defined in Lemma A.1 for $\delta_0 = \delta_i$, and let $W_i$ be the subspace of $W$ corresponding to
$\mathfrak{u}_i$. Then clearly $W = W_1 \bigoplus W_2$, and according to Lemma A.1, each $W_i$ is an (absolutely) irreducible
$M$-module. Let $T_0 = Z(M)^{\circ}$ be the central torus of the (reductive) group $M$. Then the restrictions
$\gamma_i = \delta_i \vert T_0$ for $i = 1, 2$ form a basis of $X(T_0) \otimes_{\Z} \Q$, and $W_i$ is the eigenspace of $T_0$ with
the character $\gamma_i$. It follows that the $M$-submodule of $W$ containing $w = (w_1 , w_2)$ with $w_i \in W_i$, contains
$w_1$ and $w_2$, hence coincides with $W$ if both $w_1$ and $w_2$ are nonzero.

Since $T_0$ is 2-dimensional and contains the 1-dimensional (maximal) split torus $T_s$, it splits over a quadratic extension $L/K$, and then
both subspaces $W_1$ and $W_2$ are defined over $L$. The nontrivial $\sigma \in \mathrm{Gal}(L/K)$ can either switch the weights $\gamma_1 , \gamma_2$
of $T_0$, or keep each of them fixed. However, in the second option $T_0$ would be $K$-split, which is not the case. Thus, $\sigma(\gamma_1) = \gamma_2$, and
therefore $\sigma(W_1) = W_2$. It follows that if a nonzero $w \in W(K)$ is written in the form $w = (w_1 , w_2)$ as above then both $w_1$ and $w_2$ are
\emph{automatically} nonzero, so $w$ generates $W$ as $M$-module, implying that $W$ is $K$-irreducible.   \hfill $\Box$


\bigskip

{\small {\bf Acknowledgements.} Both authors were supported by the NSF (grants DMS-1401380   and DMS-1301800).} We thank the referee for her/his comments.

\vskip8mm

\bibliographystyle{amsplain}

\end{document}